\newtheorem{theorem}{Theorem}
\newtheorem{definition}[theorem]{Definition}
\newtheorem{lemma}[theorem]{Lemma}
\newtheorem{proposition}[theorem]{Proposition}
\newtheorem{remark}[theorem]{Remark}
\newtheorem{question}[theorem]{Question}
\begin{document}

\title[The virtual singular twin monoid and group]{The virtual singular twin monoid and group: presentations and representations}

\author{Carmen Caprau}
\address{Carmen Caprau\\
               Department of Mathematics\\
                California State University, Fresno\\
                5245 N. Backer Ave. M/S PB 108\\
                Fresno, ca 93740, USA}

\email{ccaprau@csufresno.edu}

\author{Mohamad N. Nasser}
\address{Mohamad N. Nasser\\
         Department of Mathematics and Computer Science\\
         Beirut Arab University\\
         P.O. Box 11-5020, Beirut, Lebanon}
         
\email{m.nasser@bau.edu.lb}

\maketitle
 
\begin{abstract}
In this article, we introduce the algebraic definitions and presentations of the virtual singular twin monoid and virtual singular twin group, denoted by $VSTM_n$ and $VST_n$, respectively, for a positive integer $n$. These structures extend the twin group $T_n$ in close analogy  to how the virtual singular braid monoid and virtual singular braid group extend the classical braid group. We then construct and study representations of the group $VST_n$, for $n \geq 3$, focusing in particular on extending the representations $\eta_1$ and $\eta_2$ of $T_n$, introduced by M. Nasser, to $VST_n$ via the $2$-local extension method. To analyze the resulting representations, $\eta_1'$ and $\eta_2'$, and their properties, we establish necessary and sufficient conditions for irreducibility and show that both $\eta_1'$ and $\eta_2'$ are unfaithful. Additionally, we classify all complex homogeneous $2$-local representations of $VST_n$ for every integer $n\geq 3$, providing a foundation for further investigation into representations of $VST_n$.
\end{abstract}

\renewcommand{\thefootnote}{}
\footnote{\textit{Key words and phrases.} Braid Groups, Twin Groups, Virtual Singular Twin Group, Local Representations.}
\footnote{\textit{Mathematics Subject Classification.} Primary: 20F36.}

\section{Introduction}

The braid group on $n$ strands, denoted $B_n$, was introduced in 1926 by E. Artin and is generated by the elements $\sigma_1,\sigma_2, \ldots,\sigma_{n-1}$, known as the classical braid generators (see  \cite{Artin1, Artin2}). In the 1990s, J. Baez in \cite{baezz} and J. Birman in \cite{birman}, independently, introduced the singular braid monoid $SM_n$, which is the monoid generated by the classical braid elements $\sigma_1^{\pm 1},\sigma_2^{\pm 1}, \ldots,\sigma_{n-1}^{\pm 1}$ of $B_n$ along with a set of singular generators $\tau_1,\tau_2, \ldots, \tau_{n-1}$. In 1998, R. Fenn, E. Keyman, and C. Rourke proved that $SM_n$ embeds into a group, denoted $SB_n$, called the singular braid group \cite{Fenn}.

Building on this framework, in 2016, C. Caprau et al. introduced in~\cite{Caprau} the virtual singular braid monoid $VSM_n$, which is generated by the classical braid generators $\sigma_1^{\pm 1}, \sigma_2^{\pm 1}, \ldots, \sigma_{n-1}^{\pm 1}$, the singular generators $\tau_1, \tau_2, \ldots, \tau_{n-1}$, and an additional set of virtual generators $\nu_1^{\pm 1}, \nu_2^{\pm 1}, \ldots, \nu_{n-1}^{\pm 1}$. In 2024, C. Caprau and A. Yeung showed that $VSM_n$ embeds into a group, denoted by $VSB_n$, called the virtual singular braid group \cite{Caprau2}. The  monoid $VSM_n$ has as subsets the virtual braid group $VB_n$~\cite{Ka, Kauffman, KL1, KL2} and the singular braid monoid $SM_n$. 

The algebraic structures $B_n$, $VB_n$, $SM_n$, $SB_n$, $VSM_n$, and $VSB_n$ have found broad applications in low-dimensional topology and geometric studies. The braid group $B_n$ plays a central role in knot theory, as every link can be represented as the closure of a braid \cite{Alexander, Markov}. It also arises naturally in the study of configuration spaces and mapping class groups \cite{BirmanBook}. The singular braid monoid $SM_n$ offers a powerful algebraic framework for studying singular knots and Vassiliev invariants \cite{baezz,birman}. On the other hand, the virtual braid group $VB_n$ and virtual singular braid monoid $VSM_n$ extend these ideas into the realm of virtual knot theory, enabling the study of curve embeddings and curve immersions, respectively, in thickened surfaces and their isotopy classes \cite{Kauffman}. These developments illustrate how generalizations of the classical braid group enrich the interplay between algebraic structures and geometric topology.

Another important group to consider is the twin group on $n$ strands, denoted $T_n$, which is a right-angled Coxeter group generated by the elements $s_1, s_2, \ldots, s_{n-1}$. This group was first introduced in 1990 by G. Shabat and V. Voevodsky \cite{Shabat1990}, and later studied by M. Khovanov \cite{Khovanov1996, Khovanov1997}, who provided a geometric interpretation analogous to that of the braid group $B_n$. The same algebraic structure has since appeared under different names in the literature, such as the flat braid group in \cite{Merkov1999} and the planar braid group in \cite{Mosto2020}. More recently,  in 2025, M. Nasser and N. Chbili \cite{NasserNafaa} introduced the singular twin group $ST_n$, as an extension of the twin group obtained by adjoining the family of singular generators $\tau_1, \tau_2, \ldots, \tau_{n-1}$, which satisfy relations analogous to those defining the singular braid group.

The twin group and its extensions play an important role in the study of geometric and topological structures such as doodles and flat knots. In particular, the group $T_n$ captures the combinatorial and isotopic properties of configurations of immersed curves in the plane that lack over/under crossing information~\cite{Khovanov1997, Fenn2016}. These structures provide  algebraic models for analyzing planar link diagrams, Reidemeister-type moves, and equivalence classes of doodles on surfaces. The singular twin monoid further enriches this framework by introducing singular interactions between strands, offering new perspectives in the study of singular doodles and related knot-theoretic structures.
 
The study of these algebraic structures naturally leads to an investigation of their representations from an algebraic perspective. Representation theory provides a powerful framework for understanding groups and monoids, such as the braid groups and twin groups, by associating each generator with a matrix in such a way that the algebraic relations are preserved. Key characteristics of such representations include faithfulness, which ensures that distinct algebraic elements correspond to distinct matrices (i.e., the representation is injective), and irreducibility, which guarantees that the representation does not admit a proper nontrivial invariant subspace (i.e., the representation does not have a nontrivial subrepresentation).

The goal of this paper is to introduce new algebraic structures that extend both the twin group and the singular twin group. In Section~\ref{sec:background}, we provide a literature review for the braid groups and  twin groups, and introduce the notion of $k$-local representations, with particular focus on known homogeneous $2$-local representations. In Section~\ref{sec:new_monoid_and_group}, we introduce the virtual singular twin monoid $VSTM_n$ and the virtual singular twin group $VST_n$ via generators and relations. In Section~\ref{sec:rep}, we explore representations of the virtual singular twin group, focusing on extensions of the known representations $\eta_1$ and $\eta_2$ of the twin group $T_n$ into $VST_n$ via $2$-local extension methods (Theorems~\ref{Theta1} and~\ref{Theta2}). We then establish necessary and sufficient conditions for these new representations to be irreducible (Theorems~\ref{eta1irr} and~\ref{eta2irr}). In Section~\ref{sec:complex_rep}, we classify all complex homogeneous $2$-local representations of $VST_n$ for all $n\geq 3$, laying the groundwork for future investigations into the broader representation theory of $VST_n$. Finally, in Section~\ref{sec:different_presentations}, we provide additional presentations for the virtual singular twin monoid $VSTM_n$.

\section{Braid Groups, Twin Groups, and Local Representations} \label{sec:background}

We start by providing a brief overview of the relevant monoids and groups—along with some of their representations—that play a role in the development of our results.

\begin{definition}
The singular braid monoid on $n$ strands~\cite{baezz, birman}, denoted $SM_n$, is the monoid generated by the elements $\{\sigma_i, \sigma_i^{-1}, \tau_i \, | \, 1 \leq i \leq n-1\}$, and subject to the following relations:
 \begin{align}
 \sigma_i\sigma_j\sigma_i &= \sigma_j\sigma_i\sigma_j ,\hspace{1.1cm} |i-j| = 1,  \label{eqs1}\\
\sigma_i\sigma_j &= \sigma_j\sigma_i , \hspace{1.45cm} |i-j|\geq 2, \label{eqs2}\\
 \sigma_i\sigma_i^{-1} &=\sigma_i^{-1}\sigma_i ,\hspace{1.1cm}  \text{ for all } i, \label{eqs3}\\
\tau_i\tau_j &=\tau_j\tau_i ,\hspace{1.55cm} |i-j|\geq 2, \label{eqs4}\\ 
\tau_i\sigma_j &=\sigma_j\tau_i ,\hspace{1.5cm} |i-j|\geq 2, \label{eqs5}\\
 \tau_i\sigma_i &=\sigma_i\tau_i ,\hspace{1.4cm} \text{ for all } i, \label{eqs6}\\
 \sigma_i\sigma_j\tau_i &=\tau_j\sigma_i\sigma_j, \hspace{1.1cm} |i-j| = 1.  \label{eqs7}
\end{align}
The braid group on $n$ strands \cite{Artin1, Artin2}, denoted $B_n$, is the group with presentation 
\[ <\sigma_1, \ldots,  \sigma_{n-1} \, | \, \text{Relations} \, \eqref{eqs1}, \eqref{eqs2}>. \]
\noindent By imposing that the generators $\tau_i$ are invertible for all $1\leq i \leq n-1,$ we obtain a group that extends $B_n$. This group is generated by $\{\sigma_i, \tau_i \, |  1 \leq i \leq n-1\}$, and is known as the singular braid group, denoted $SB_n$ \cite{Fenn}. The elements $\sigma_i$ and $\tau_i$ are called the braid generators and, respectively, the singular generators. 
\end{definition}

Figure~\ref{SingBraidGenerators} shows diagrammatic representations of the braid generators $\sigma_i$ and singular generators $\tau_i$, along with their inverses $\sigma_i^{-1}$ and $\tau_i^{-1}$, respectively. We remark that the convention for the diagrammatic representations for $\sigma_i$ and $\sigma_i^{-1}$ is reversed in other works, but that does not affect the results we work with here.

\begin{figure}[h!]
\begin{tikzpicture}
	\draw[thick] (-1.5,0)--(-1.5,2); 
    \fill (-1,1) circle(1pt) (-1.2,1) circle(1pt)(-0.8,1)circle(1pt);       
    \draw[thick] (-.5,0)--(-.5,2);
    \draw[thick] (2.5,0)--(2.5,2); 
    \fill (2,1) circle(1pt) (2.2,1) circle(1pt) (1.8,1)circle(1pt);       
    \draw[thick] (1.5,0)--(1.5,2);
    \draw[thick] (0,0) to[out=90, in=-90] (1,2);
    \draw[thick, white, line width=4pt] (1,0) to[out=90, in=-90] (0,2); 
    \draw[thick] (1,0) to[out=90, in=-90] (0,2);
    \node[above] at(0,2){$i$};
    \node[above] at(1,2){$i+1$};
     \node[left] at(-2,1){$\sigma_i=$};
	\node at (2.8,0.8){,};  
 	\draw[thick] (5.5,0)--(5.5,2); 
    \fill (5,1) circle(1pt) (4.8,1) circle(1pt)(5.2,1)circle(1pt);       
    \draw[thick] (4.5,0)--(4.5,2);
    \draw[thick] (8.5,0)--(8.5,2); 
    \fill (8,1) circle(1pt) (8.2,1) circle(1pt)(7.8,1)circle(1pt);       
    \draw[thick] (7.5,0)--(7.5,2);    
    \draw[thick] (7,0) to[out=90, in=-90] (6,2);
    \draw[thick, white, line width=4pt] (6,0) to[out=90, in=-90] (7,2); 
    \draw[thick] (6,0) to[out=90, in=-90] (7,2);
    \node[above] at(6,2){$i$};
    \node[above] at (7,2) {$i+1$};
     \node[left] at(4.5,1){$\sigma_i^{-1}=$};
\end{tikzpicture}

\begin{tikzpicture}
	\draw[thick] (5.5,0)--(5.5,2); 
   \fill (5,1) circle(1pt) (4.8,1) circle(1pt)(5.2,1)circle(1pt);       
    \draw[thick] (4.5,0)--(4.5,2);
    \draw[thick] (8.5,0)--(8.5,2); 
    \fill (8,1) circle(1pt) (8.2,1) circle(1pt)(7.8,1)circle(1pt);       
   \draw[thick] (7.5,0)--(7.5,2);    
	\draw[thick] (7,0) to[out=90, in=-90] (6,2);
    \draw[thick, white, line width=4pt] (6,0) to[out=90, in=-90] (7,2); 
   \draw[thick] (6,0) to[out=90, in=-90] (7,2);
\draw[draw=black,fill=white] (6.5,1) circle (2.5pt); 
    \node[above] at(6,2){$i$};
    \node[above] at (7,2) {$i+1$};
     \node[left] at(-2,1){$\tau_i=$};
	\node at (2.8,0.8){,};    
 	\draw[thick] (-1.5,0)--(-1.5,2); 
    \fill (-1,1) circle(1pt) (-1.2,1) circle(1pt)(-0.8,1)circle(1pt);       
    \draw[thick] (-.5,0)--(-.5,2);
    \draw[thick] (2.5,0)--(2.5,2); 
    \fill (2,1) circle(1pt) (2.2,1) circle(1pt)(1.8,1)circle(1pt);       
    \draw[thick] (1.5,0)--(1.5,2);
    \draw[thick] (0,0) to[out=90, in=-90] (1,2);
    \draw[thick, white, line width=4pt] (1,0) to[out=90, in=-90] (0,2); 
    \draw[thick] (1,0) to[out=90, in=-90] (0,2);
    \fill[black] (0.5,1) circle (2.5pt); 
    \node[above] at(0,2){$i$};
    \node[above] at(1,2){$i+1$};
     \node[left] at(4.5,1){$\tau_i^{-1}=$};
\end{tikzpicture}
\caption{The generators \(\sigma_i\) and \(\tau_i\) and their inverses}
\label{SingBraidGenerators}
\end{figure}
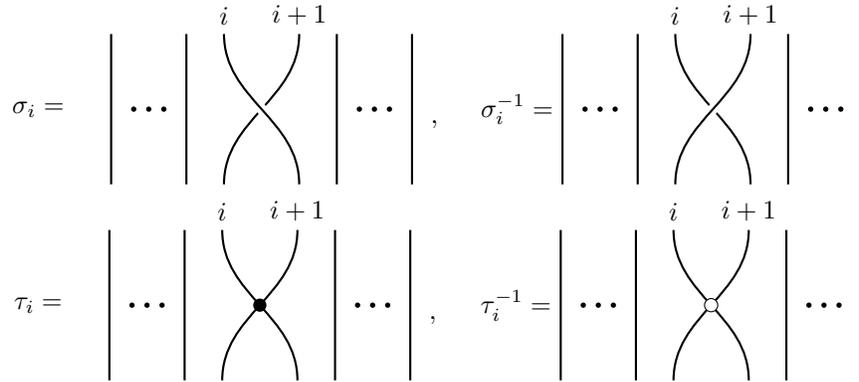

When working diagrammatically with elements of $SB_n$ (or $B_n$), we refer to them as singular braids (or classical braids) on $n$ strands. We compose these braids by stacking one on top of the other and connecting their endpoints.

\begin{definition}
The virtual singular braid monoid on $n$ strands~\cite{Caprau, Caprau1, Caprau2}, denoted here by $VSM_n$, is the monoid generated by the elements $\sigma_i, \sigma_i^{-1}$ and $\tau_i$ of $SM_n$, together with the family of elements $\nu_i$, for all $1\leq i \leq n-1$. In addition to the relations \eqref{eqs1} through \eqref{eqs7} of  the monoid $SM_n$, the generators of $VSM_n$ are subject to the following relations:
 \begin{align}
\nu_i^2 &=1 ,\hspace{1.75cm} \text{ for all } i,  \label{eqs8}\\
\nu_i\nu_j\nu_i &= \nu_j\nu_i\nu_j ,\hspace{1.1cm} |i-j| = 1, \label{eqs9}\\
 \nu_i\sigma_j\nu_i &= \nu_j\sigma_i\nu_j ,\hspace{1.1cm} |i-j| = 1, \label{eqs10}\\
 \nu_i\tau_j\nu_i &= \nu_j\tau_i\nu_j ,\hspace{1.15cm} |i-j| = 1,  \label{eqs11}\\
\nu_i\nu_j &=\nu_j\nu_i ,\hspace{1.45cm} |i-j|\geq 2,  \label{eqs12}\\
 \nu_i\sigma_j &=\sigma_j\nu_i ,\hspace{1.4cm} |i-j|\geq 2, \label{eqs13}\\
\nu_i\tau_j &=\tau_j\nu_i ,\hspace{1.5cm} |i-j|\geq 2. \label{eqs14} 
\end{align} 
 By imposing that the generators $\tau_i$ are invertible and adjoining their inverses, we obtain a group extension of both $B_n$ and $SB_n$, generated by $\{\sigma_i, \tau_i, \nu_i \, | \, 1\leq i \leq n-1\}$. This group is called the virtual singular braid group and is denoted by $VSB_n$ \cite{Caprau2}.
 
 The virtual braid group on $n$ strands~\cite{Ka, Kauffman, KL1, KL2}, denoted by $VB_n$, is the group generated by $\{\sigma_i, \nu_i \, | \, 1\leq i\leq n-1 \}$ and subject to the relations ~\eqref{eqs1}, \eqref{eqs2}, \eqref{eqs8}, \eqref{eqs9}, \eqref{eqs10}, \eqref{eqs12} and \eqref{eqs13}.
 \end{definition}

 The virtual generators $\nu_i$ are usually depicted diagrammatically as in Fig.~\ref{VirtualGenerators}.
 \begin{figure}[ht]
\begin{tikzpicture}
\draw[thick] (5.5,0)--(5.5,2); 
\fill (5,1) circle(1pt) (4.8,1) circle(1pt)(5.2,1)circle(1pt);       
\draw[thick] (4.5,0)--(4.5,2);
\draw[thick] (8.5,0)--(8.5,2); 
\fill (8,1) circle(1pt) (8.2,1) circle(1pt)(7.8,1)circle(1pt);       
\draw[thick] (7.5,0)--(7.5,2);    
\draw[thick] (7,0) to[out=90, in=-90] (6,2);
\draw[thick] (6,0) to[out=90, in=-90] (7,2);
\draw[thick] (6.5,1) circle (3pt);
\node[above] at(6,2){$i$};
\node[above] at (7,2) {$i+1$};
 \node[left] at(4,1){$\nu_i=$};
\end{tikzpicture}
\caption{The generator $\nu_i$}
\label{VirtualGenerators}
\end{figure}
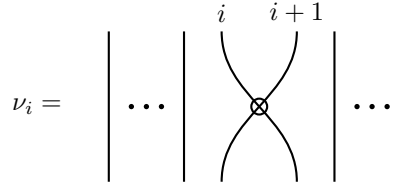

\begin{definition}\cite{Shabat1990}
The twin group on $n$ strands, denoted $T_n$, is the group generated by the elements $s_i$, where $1\leq i\leq n-1$, and subject to the following relations:
\begin{align}
 s_i^2 &=1,\hspace{0.85cm} \text{ for all } i,  \label{eqs15}\\
s_is_j &=s_js_i, \hspace{0.55cm} |i-j|\geq 2. \label{eqs16}
\end{align}
\end{definition}

We represent the generators $s_i$ diagrammatically as in Fig.~\ref{TwinGenerators}, and we refer to them as \textit{twin elements}.
\begin{figure}[h!]
\begin{tikzpicture}
\draw[thick] (5.5,0)--(5.5,2); 
\fill (5,1) circle(1pt) (4.8,1) circle(1pt)(5.2,1)circle(1pt);       
\draw[thick] (4.5,0)--(4.5,2);
\draw[thick] (8.5,0)--(8.5,2); 
\fill (8,1) circle(1pt) (8.2,1) circle(1pt)(7.8,1)circle(1pt);       
\draw[thick] (7.5,0)--(7.5,2);    
\draw[thick] (7,0) to[out=90, in=-90] (6,2);
\draw[thick] (6,0) to[out=90, in=-90] (7,2);
\node[above] at(6,2){$i$};
\node[above] at (7,2) {$i+1$};
 \node[left] at(4,1){$s_i=$};
\end{tikzpicture}
\caption{The generator $s_i$}
\label{TwinGenerators}
\end{figure}
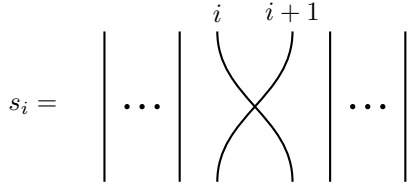

\begin{definition} \cite{NasserNafaa}
The singular twin monoid on $n$ strands, denoted $STM_n$, is a monoid generated by $\{s_i, \tau_i \, | \, 1\leq i \leq n-1 \}$, where $s_i$ are the twin elements of $T_n$ and $\tau_i$ are the singular elements of $SM_n$. In addition to the relations \eqref{eqs15} and \eqref{eqs16} of $T_n$,  and relation \eqref{eqs4} of $SM_n$, the generators of $STM_n$ satisfy the following relations:
\begin{align}
\tau_is_j &=s_j\tau_i ,\hspace{1.5cm} |i-j| \geq 2,  \label{eqs17}\\
 \tau_is_i &=s_i\tau_i ,\hspace{1.4cm} \text{ for  all } i,  \label{eqs18}\\
s_is_j\tau_i &=\tau_js_is_j, \hspace{1.2cm} |i-j| = 1. \label{eqs19}
\end{align}
 By imposing that the generators $\tau_i$ are invertible and adjoining their inverses, we obtain a group extension of $T_n$, generated by  $\{s_i, \tau_i\, | \, 1\leq i \leq n-1 \}$. This group is called the singular twin group and is denoted by $ST_n$.
\end{definition}

We now proceed to introduce the concept of $k$-local representations of a group $G$, which will serve as a key tool in this paper.

\vspace*{0.1cm}

\begin{definition} \cite{40} \label{deflocal}
Let $t$ be an indeterminate and let $G$ be a group defined by a presentation with generators $g_1,g_2,\ldots,g_{n-1}.$ A representation $\theta: G \rightarrow \mathrm{GL}_{m}(\mathbb{Z}[t^{\pm 1}])$ is said to be $k$-local if 
$$\theta(g_i) =\left( \begin{array}{c|@{}c|c@{}}
   \begin{matrix}
     I_{i-1} 
   \end{matrix} 
      & 0 & 0 \\
      \hline
    0 &\hspace{0.2cm} \begin{matrix}
   		M_i
   		\end{matrix}  & 0  \\
\hline
0 & 0 & I_{n-i-1}
\end{array} \right) \hspace*{0.2cm} \text{for all} \hspace*{0.2cm} 1\leq i\leq n-1,$$ 
where $M_i \in \mathrm{GL}_k(\mathbb{Z}[t^{\pm 1}])$ with $k=m-n+2$ and $I_r$ is the $r\times r$ identity matrix. The $k$-local representation is said to be homogeneous if all the matrices $M_i$ are equal.
\end{definition}

\begin{remark}
The concept of $k$-local representations can be extended to a group $G$ generated by $q(n-1)$ elements, where these elements are partitioned into $q$ disjoint classes, each consisting of $n-1$ generators. For example, if $q=3$, $t$ is an indeterminate and $G$ is a group with generators $\{g_i, h_i, p_i \, | \, 1\leq i\leq n-1 \}$, a $k$-local representation $\theta: G \rightarrow \mathrm{GL}_{m}(\mathbb{Z}[t^{\pm 1}])$ is a representation given by
$$\theta(g_i) =\left( \begin{array}{c|@{}c|c@{}}
   \begin{matrix}
     I_{i-1} 
   \end{matrix} 
      & 0 & 0 \\
      \hline
    0 &\hspace{0.2cm} \begin{matrix}
   		G_i
   		\end{matrix}  & 0  \\
\hline
0 & 0 & I_{n-i-1}
\end{array} \right), 
\hspace{1cm}
\theta(h_i) =\left( \begin{array}{c|@{}c|c@{}}
   \begin{matrix}
     I_{i-1} 
   \end{matrix} 
      & 0 & 0 \\
      \hline
    0 &\hspace{0.2cm} \begin{matrix}
   		H_i
   		\end{matrix}  & 0  \\
\hline
0 & 0 & I_{n-i-1}
\end{array} \right),$$ 
and
$$\theta(p_i) =\left( \begin{array}{c|@{}c|c@{}}
   \begin{matrix}
     I_{i-1} 
   \end{matrix} 
      & 0 & 0 \\
      \hline
    0 &\hspace{0.2cm} \begin{matrix}
   		P_i
   		\end{matrix}  & 0  \\
\hline
0 & 0 & I_{n-i-1}
\end{array} \right),$$ 
for $1\leq i\leq n-1,$ where $G_i,H_i,P_i \in \mathrm{GL}_k(\mathbb{Z}[t^{\pm 1}])$, $k=m-n+2$, and $I_r$ is the $r\times r$ identity matrix. In addition, $\theta$ is homogeneous if  $G_i = G_j, H_i = H_j$ and $P_i = P_j$, for all $i \neq j$.    
\end{remark}

\begin{question}
Given a homogeneous $k$-local representation $\theta$ of a group $G$, what structural features and algebraic properties can $\theta$ exhibit?
\end{question}

Recently, the study of $k$-local representations has seen significant development. The investigation of such representations for braid groups began with the work of Y. Mikhalchishina in~\cite{Mik}, who classified all complex $2$-local representations of $B_3$, as well as all complex homogeneous $2$-local representations of $B_n$, for all $n \geq 3$. Building on this, T. Mayassi and M. Nasser investigated in \cite{37} the complex homogeneous $3$-local representations of $B_n$ for all $n \geq 4$. In the same work, they also classified all complex homogeneous $2$-local representations of the singular braid monoid $SM_n$ for all $n \geq 2$, and all complex homogeneous $3$-local representations of $SM_n$ for all $n \geq 4$. 

On the other hand, regarding $k$-local representations of the twin groups, T. Mayassi and M. Nasser provided a full classification of the $2$-local representations of $T_n$ for all $n \geq 2$ in \cite{Mayasi20251}. Further progress was made by M. Nasser, who classified in \cite{M.Nass.3twin} all $3$-local representations of the twin group $T_n$, the virtual twin group $VT_n$, and the welded twin group $WT_n$, for all $n \geq 4$.  Additionally, in \cite{M.N.twin}, M. Nasser introduced two specific representations of $T_n$, denoted by $N_1$ and $N_2$, for all $n \geq 2$. 

We provide now the definitions and key properties of these representations.
 
\begin{definition} \cite{M.N.twin} \label{defeta1}
The $N_1$-representation $\eta_1: T_n \rightarrow \mathrm{GL}_n(\mathbb{Z}[t^{\pm 1}])$, where $t$ is an indeterminate, is the representation defined by
$$\eta_1(s_i) = \left( \begin{array}{c|@{}c|c@{}}
   \begin{matrix}
     I_{i-1} 
   \end{matrix} 
      & 0 & 0 \\
      \hline
    0 &\hspace{0.2cm} \begin{matrix}
   	1-t & t\\
   	2-t & t-1\\
\end{matrix}  & 0  \\
\hline
0 & 0 & I_{n-i-1}
\end{array} \right) \text{ for  } 1\leq i \leq n-1.$$ 
\end{definition}

\begin{theorem} \cite{M.N.twin}
The representation $\eta_1: T_n \rightarrow \mathrm{GL}_n(\mathbb{Z}[t^{\pm 1}])$ is reducible to the degree $n-1$ for all $n\geq 3$, and it is faithful for $n=2$ and $n=3$. Moreover, the complex specialization of its $(n-1)$-composition factor, namely $\eta_1': T_n \rightarrow \mathrm{GL}_{n-1}(\mathbb{C})$, is irreducible if and only if $t\neq \dfrac{2n-2}{n-2}$ and $t\neq 2$.
\end{theorem}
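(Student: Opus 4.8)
The plan is to handle the three assertions in turn, using throughout the observation that each generator deviates from the identity by a rank-one matrix.

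\emph{Reducibility and the $(n-1)$-dimensional factor.} The first step is to record that the block $M=\begin{pmatrix}1-t & t\\ 2-t & t-1\end{pmatrix}$ satisfies $M^2=I$ and fixes $(1,1)^{\mathsf T}$, so that the all-ones vector $\mathbf 1=(1,\dots,1)^{\mathsf T}$ is fixed by every $\eta_1(s_i)$. Hence $L=\langle\mathbf 1\rangle$ is an invariant line, $\eta_1$ is reducible, and the induced action on the $(n-1)$-dimensional quotient (equivalently, on an invariant complement) is the composition factor $\eta_1'$. The structural remark that drives the rest is that $\eta_1(s_i)-I$ has rank one: setting $u_i=-t\,e_i+(2-t)e_{i+1}$ and letting $w_i$ be the functional $v\mapsto v_i-v_{i+1}$, one has $\eta_1(s_i)=I+u_i w_i^{\mathsf T}$. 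Consequently $U=\langle u_1,\dots,u_{n-1}\rangle$ is an $(n-1)$-dimensional invariant subspace carrying $\eta_1'$, on which the generators act by explicit formulas amenable to computation.

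\emph{Faithfulness for $n=2,3$.} For $n=2$ we have $T_2\cong\mathbb Z/2$ and $\eta_1(s_1)=M\neq I$, so $\eta_1$ is injective. For $n=3$, $T_3=\langle s_1,s_2\mid s_1^2=s_2^2=1\rangle$ is the infinite dihedral group, whose nontrivial elements are the powers $r^{k}$ of $r=s_1s_2$ and the elements $r^{k}s_1$. I would first compute $\det\eta_1(s_i)=-1$, so that $\det\eta_1(r^{k}s_1)=-1\neq 1$ and no reflection-type element lies in the kernel. It then remains to show $\eta_1(r)$ has infinite order: the non-unit eigenvalues $\lambda,\lambda^{-1}$ of $\eta_1(s_1)\eta_1(s_2)$ satisfy $\lambda+\lambda^{-1}=\operatorname{tr}(\eta_1'(s_1)\eta_1'(s_2))$, which is a nonconstant element of $\mathbb Q(t)$; were $\eta_1(r)^{k}=I$ for some $k\neq 0$, then $\lambda$ would be a root of unity and $\lambda+\lambda^{-1}$ a constant, a contradiction. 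Thus $\eta_1(r)^{k}\neq I$ for $k\neq 0$, and $\eta_1$ is injective.

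\emph{Irreducibility of $\eta_1'$.} Working in the basis $u_1,\dots,u_{n-1}$ of $U$, the identity $\eta_1(s_i)=I+u_iw_i^{\mathsf T}$ shows that each $\eta_1'(s_i)$ is a reflection fixing $u_j$ for $|j-i|\ge 2$, sending $u_i\mapsto -u_i$, and modifying $u_{i\pm1}$ by a multiple of $u_i$; the whole action is encoded by a tridiagonal ``Cartan'' matrix $C$ with constant diagonal and off-diagonal products determined by $t$. The reducibility locus of $\eta_1'$ is cut out by the degeneration of this data: I would prove the ``only if'' direction by assuming a nonzero proper invariant $W\subseteq U$ and propagating along the chain $s_1,\dots,s_{n-1}$, showing that whenever a vector of $W$ is not fixed by $s_i$ one gains $u_i\in W$, so that connectivity forces $W=U$ unless a coordinate obstruction occurs; quantifying that obstruction amounts to the vanishing of the relevant tridiagonal determinant together with a boundary degeneration. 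For the converse, at each of the two exceptional values $t=2$ and $t=\tfrac{2n-2}{n-2}$ I would exhibit an explicit proper invariant subspace—at one value a coordinate subspace such as $\langle u_1\rangle$ becomes invariant, at the other the radical vector of $C$ (built from recursively determined coordinates) spans one.

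\emph{Main obstacle.} The hard part is the ``only if'' direction uniformly in $n$: one must evaluate the tridiagonal determinant in closed form via a Chebyshev-type recursion $D_m=2D_{m-1}-(\ast)\,D_{m-2}$ and match its pertinent vanishing locus exactly to $t=\tfrac{2n-2}{n-2}$, while separately tracking the boundary degeneration responsible for $t=2$ and the special behavior of the two end generators $s_1,s_{n-1}$ (whose reflection formulas differ from the interior ones and govern whether $\mathbf 1\in U$). Making the propagation argument airtight—simultaneously excluding invariant subspaces of every intermediate dimension for all $t$ outside the two listed values—is where the real work lies.
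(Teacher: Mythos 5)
You should note at the outset that the paper you are being compared against does not prove this theorem at all: it is imported verbatim from \cite{M.N.twin}, so your proposal can only be judged against the statement itself. Your first two parts hold up. The rank-one decomposition $\eta_1(s_i)=I+u_iw_i^{\mathsf T}$ with $u_i=-te_i+(2-t)e_{i+1}$ and $w_i=e_i-e_{i+1}$ is correct; it yields both the fixed line $\langle\mathbf 1\rangle$ and the invariant complement $U=\langle u_1,\dots,u_{n-1}\rangle$, and your faithfulness argument for $n=2,3$ (determinant $-1$ rules out the odd-length elements, non-constancy of $\operatorname{tr}\eta_1(s_1s_2)$ rules out nontrivial powers of $s_1s_2$) is sound.

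The genuine gap is in the irreducibility part, and it is not merely that you deferred the hard computation: if you actually carry out your own propagation argument, it terminates in a criterion that contradicts the theorem you are trying to prove. On $U$ one has $\eta_1(s_i)x=x+(w_i^{\mathsf T}x)u_i$, with $w_i^{\mathsf T}u_i=-2$, $w_i^{\mathsf T}u_{i+1}=t$, $w_i^{\mathsf T}u_{i-1}=2-t$. Hence for $t\neq 0,2$, a nonzero invariant $W\subsetneq U$ must consist of vectors annihilated by every $w_i$, i.e.\ $W\subseteq\langle\mathbf 1\rangle\cap U$, and this intersection is nonzero precisely when $\sum_{k=0}^{n-1}q^k=0$ for $q=t/(2-t)$, i.e.\ when $q$ is a nontrivial $n$-th root of unity. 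Since $q=-1$ is impossible, these exceptional $t$ are never real; so the reducibility locus of the composition factor you construct is $t=2$ together with certain non-real values of $t$, and the real value $t=\tfrac{2n-2}{n-2}$ is never in it. Concretely, for $n=3$, $t=4$: in your basis $\eta_1'(s_1)=\begin{pmatrix}-1&4\\0&1\end{pmatrix}$ and $\eta_1'(s_2)=\begin{pmatrix}1&0\\-2&-1\end{pmatrix}$ have no common eigenvector, so $\eta_1'$ is irreducible at $t=\tfrac{2n-2}{n-2}=4$, whereas at the roots of $t^2-2t+4=0$ (where $q$ is a primitive cube root of unity) it is reducible --- in both respects the opposite of the quoted statement. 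In particular, the ``radical vector of $C$'' you promise to exhibit at $t=\tfrac{2n-2}{n-2}$ does not exist. The upshot is that either the $\eta_1'$ of \cite{M.N.twin} is a different reduction from the natural composition factor you construct, or the quoted exceptional locus cannot be recovered by this route; in either case your plan cannot yield the statement as written, and the inconsistency sits exactly in the step you set aside as the ``main obstacle.''
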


 
\begin{definition}\cite{M.N.twin} \label{defeta2}
The $N_2$-representation $\eta_2: T_n \rightarrow \mathrm{GL}_n(\mathbb{Z}[t^{\pm 1}])$, where $t$ is an indeterminate, is the representation defined by
$$\eta_2(s_i) = \left( \begin{array}{c|@{}c|c@{}}
   \begin{matrix}
     I_{i-1} 
   \end{matrix} 
      & 0 & 0 \\
      \hline
    0 &\hspace{0.2cm} \begin{matrix}
   	0 & f(t)\\
   	\dfrac{1}{f(t)} & 0\\
\end{matrix}  & 0  \\
\hline
0 & 0 & I_{n-i-1}
\end{array} \right) \text{ for } 1\leq i \leq n-1,$$
where $f(t)\in \mathbb{Z}[t^{\pm 1}]$ with $f(t)$ is invertible in $\mathbb{Z}[t^{\pm 1}]$.
\end{definition}

\begin{theorem} \cite{M.N.twin} \label{eta2fai}
The representation $\eta_2: T_n \rightarrow \text{GL}_n(\mathbb{Z}[t^{\pm 1}])$ is reducible. Moreover, $\eta_2$ is faithful for $n=2$ and unfaithful for all $n\geq 3$.
\end{theorem}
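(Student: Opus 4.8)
The plan is to handle the three assertions separately, in each case exploiting that every $\eta_2(s_i)$ is a monomial (generalized permutation) matrix: it sends $e_i \mapsto f(t)^{-1} e_{i+1}$, $e_{i+1} \mapsto f(t)\, e_i$, and fixes each remaining basis vector $e_j$, so its underlying permutation is the transposition $(i,\,i+1)$. For \textbf{reducibility} I would search for a common eigenvector. Imposing $\eta_2(s_i)v = v$ on $v = \sum_{i=1}^n c_i e_i$ forces, for each $i$, the relation $c_i = f(t)\,c_{i+1}$ (matching the $e_i$-component $c_{i+1}f(t)$ with $c_i$ and the $e_{i+1}$-component $c_i f(t)^{-1}$ with $c_{i+1}$). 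Up to scaling this yields $v = \sum_{i=1}^n f(t)^{\,n-i} e_i$, and a one-line check confirms every generator fixes this $v$. Hence $\langle v\rangle$ is a nonzero proper invariant subspace for all $n\ge 2$, so $\eta_2$ is reducible.

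For \textbf{faithfulness when $n=2$}, I would use that $T_2 = \langle s_1 \mid s_1^2 = 1\rangle \cong \mathbb{Z}/2$. Since $f(t)\ne 0$, the matrix $\eta_2(s_1) = \bigl(\begin{smallmatrix} 0 & f(t)\\ f(t)^{-1} & 0\end{smallmatrix}\bigr)$ is not the identity, while $\eta_2(s_1)^2 = I_2$. The two elements $1,\,s_1$ of $T_2$ therefore have distinct images, so $\eta_2$ is injective.

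For \textbf{unfaithfulness when $n\ge 3$}, the strategy is to exhibit an explicit nontrivial kernel element. Under composition the underlying transpositions compose and the monomial scalars multiply, so a kernel candidate must first map to the identity of $S_n$; the element $w = (s_1 s_2)^3$ qualifies, since $\bigl((1\,2)(2\,3)\bigr)^3 = (1\,2\,3)^3 = e$, forcing $\eta_2(w)$ to be diagonal. I would then compute that on $\langle e_1, e_2, e_3\rangle$ the image $\eta_2(s_1 s_2)$ cyclically permutes the basis vectors with scalars whose product around the cycle is $f(t)^{-1}\cdot f(t)^{-1}\cdot f(t)^2 = 1$, whence $\eta_2(w) = \eta_2(s_1 s_2)^3 = I_n$ (the vectors $e_4,\dots,e_n$ being fixed throughout). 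The main obstacle is not this computation but the non-computational step of confirming that $w$ is genuinely nontrivial in $T_n$. I would settle this via Coxeter theory: $T_n$ is a right-angled Coxeter group, and the standard parabolic subgroup $\langle s_1, s_2\rangle$ carries only the relations $s_1^2 = s_2^2 = 1$, hence is the infinite dihedral group $D_\infty$, in which $s_1 s_2$ has infinite order. Thus $w \ne 1$ while $\eta_2(w) = I_n$, proving $\eta_2$ unfaithful for every $n\ge 3$.
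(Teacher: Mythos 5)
The paper does not prove this statement --- it is imported verbatim from \cite{M.N.twin} --- so there is no in-paper proof to compare against; your argument can only be judged on its own terms, and on those terms it is correct and complete. The common fixed vector $v=\sum_{i=1}^{n} f(t)^{\,n-i}e_i$ does satisfy $\eta_2(s_i)v=v$ for every $i$ (both component conditions collapse to $c_i=f(t)\,c_{i+1}$), giving reducibility; the $n=2$ case is immediate since $T_2\cong\mathbb{Z}/2$ and $\eta_2(s_1)\neq I_2$ while $\eta_2(s_1)^2=I_2$; and for $n\geq 3$ the element $w=(s_1s_2)^3$ works exactly as you claim: $\eta_2(s_1s_2)$ is monomial with underlying $3$-cycle $e_1\mapsto f(t)^{-1}e_2\mapsto f(t)^{-2}e_3\mapsto e_1$, the scalar product around the cycle is $f(t)^{-1}\cdot f(t)^{-1}\cdot f(t)^{2}=1$, hence $\eta_2(w)=I_n$, while $w\neq 1$ in $T_n$ because the standard parabolic subgroup $\langle s_1,s_2\rangle$ of the right-angled Coxeter group $T_n$ is $\mathbb{Z}/2\ast\mathbb{Z}/2\cong D_\infty$, in which $s_1s_2$ has infinite order. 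If you prefer to avoid invoking the parabolic-subgroup theorem for Coxeter groups, note that the assignment $s_1\mapsto a$, $s_2\mapsto b$, $s_i\mapsto 1$ for $i\geq 3$ respects all defining relations of $T_n$ and hence gives a homomorphism onto $\langle a,b \mid a^2=b^2=1\rangle\cong D_\infty$ carrying $w$ to $(ab)^3\neq 1$; this gives the nontriviality of $w$ by a one-line check, but your route is equally valid.
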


\section{The Virtual Singular Twin Monoid and Group} \label{sec:new_monoid_and_group}

In analogy with the constructions of the virtual singular braid monoid and the virtual singular braid group, we introduce in this section two related algebraic structures: the virtual singular twin monoid and the virtual singular twin group. These structures are defined in a manner that mirrors their braid counterparts, while exhibiting distinct properties and relations arising from the twin group framework. Our objective is to formally define these structures and to investigate their generating sets and defining relations.

\begin{definition} \label{def:VSTM}
The virtual singular twin monoid on $n$ strands, denoted $VSTM_n$, is the monoid generated by the elements $\{s_i, \tau_i, \nu_i \, | \, 1 \leq i\leq n-1 \}$, and satifying the following relations:
\begin{align}
s_i^2 &=1 \text{ and } \nu_i^2 =1, \text{ for all } i \label{eqs20}\\
\tau_is_i &=s_i\tau_i,  \hspace{0.4cm} \text{ for all } i \label{eqs21}\\
s_is_j \tau_i &= \tau_js_is_j, \text{ for } |i-j| = 1 \label{eqs22} \\
\nu_i\nu_j\nu_i &= \nu_j\nu_i\nu_j, \text{ for } |i-j| = 1 \label{eqs23} \\
\nu_is_j\nu_i &= \nu_js_i\nu_j,  \text{ for } |i-j| = 1 \label{eqs24} \\
\nu_i\tau_j\nu_i &= \nu_j\tau_i\nu_j, \text{ for } |i-j| = 1 \label{eqs25} \\
g_ih_j &= h_jg_i, \,\,\,\, \text{ for } |i-j| \geq 2, \text{ where } g_i,h_i \in \{s_i, \tau_i, \nu_i \}.
\end{align}
By imposing that the generators $\tau_i$ are invertible and adjoining their inverses $\tau_i^{-1}$, we obtain a group extension of both the twin group $T_n$ and the singular twin group $ST_n$. We refer to this group as the virtual singular twin group, and we denote it by $VST_n$.
\end{definition}

When regarding the elements of $VSTM_n$ and $VST_n$ diagrammatically, we refer to them as \textit{virtual singular twin braids}. Such braids are composed analogously to those in the previously defined monoids and groups, by placing one braid on top of another and connecting their endpoints.

We remark that since $s_i^2 = 1$ for all $i$, the relation $s_is_j \tau_i = \tau_js_is_j$ is equivalent to $s_i\tau_j s_i=s_j \tau_is_j$, for all $|i-j| = 1$.
We will refer to the relations \ $\nu_i\nu_j\nu_i = \nu_j\nu_i\nu_j$, $\nu_is_j\nu_i = \nu_js_i\nu_j$, and $\nu_i\tau_j\nu_i = \nu_j\tau_i\nu_j$, for $|i-j| = 1$, collectively as \textit{detour relations}. Finally, we refer to the last set of relations in Definition~\ref{def:VSTM} as the \textit{commuting relations}.

There is an obvious group homomorphism that associates to each virtual singular twin braid in $VST_n$ a permutation in $S_n$, via the following map:
\[ \pi: VST_n \to S_n, \,\, \pi(s_i) =\pi(\tau_i) =\pi(\nu_i) =(i, i+1), \,\,\, \text{for all} \,\, \, 1 \leq i \leq n-1. \]
We extend $\pi$ to all elements in $VST_n$ such that $\pi$ is a group homomorphism. Since the set $\{(i, i+1) \mid 1 \le i \le n-1\}$ generates $S_n$, the map $\pi$ is surjective. We may regard $S_n$ as a subgroup of $VST_n$ generated by $\{\nu_i \,  \big{|} \,1 \le i \le n-1\}$, by identifing $\nu_i$ with the transposition $(i, i+1)$. The virtual singular twin braids whose associated permutation is the identity permutation form a normal subgroup of $VST_n$.

\begin{definition} \label{Virtual Singular Pure Twin Group}
We call the kernel of $\pi$ the \textit{virtual singular pure twin group}, and denote it by $VSPT_n$. We refer to an element of $VSPT_n$ as a virtual singular pure twin braid.
\end{definition}

Clearly, $VSPT_n$ is a normal subgroup of $VST_n$ and $VST_n/VSPT_n \cong S_n$. 

In Section \ref{sec:different_presentations} we will provide three alternative presentations for the virtual singular twin monoid $VSTM_n$.

\section{On the Extensions of the Representations $\eta_1$ and $\eta_2$ of $T_n$ to $VST_n$} \label{sec:rep}

In this section, we examine all possible homogeneous $2$-local representations of the virtual singular twin group $VST_n$, for $n\geq 3$, that extend the representations $\eta_1$ and $\eta_2$ of $T_n$, as introduced in Definitions \ref{defeta1} and  \ref{defeta2}. We also investigate some properties of these extended representations. Throughout this section, we assume $n\geq 3$.

\begin{theorem} \label{Theta1}
Let $\eta'_1: VST_n \rightarrow \mathrm{GL}_n(\mathbb{Z}[t^{\pm 1}])$, where $t$ is an indeterminate, be a homogeneous $2$-local representation of $VST_n$ that extends the representation $\eta_1$ given in Definition \ref{defeta1}. Then, the action of $\eta'_1$ on the generators of $VST_n$ is given as follows:
\begin{align*}
\eta_1'(s_i) &= \left( \begin{array}{c|@{}c|c@{}}
   \begin{matrix}
     I_{i-1} 
   \end{matrix} 
      & 0 & 0 \\
      \hline
    0 &\hspace{0.2cm} \begin{matrix}
   	1-t & t\\
   	2-t & t-1\\
\end{matrix}  & 0  \\
\hline
0 & 0 & I_{n-i-1}
\end{array} \right), \\
\\
\eta_1'(\tau_i) & =I_n,
\\
\eta_1'(\nu_i) &=  \left( \begin{array}{c|@{}c|c@{}}
   \begin{matrix}
     I_{i-1} 
   \end{matrix} 
      & 0 & 0 \\
      \hline
    0 &\hspace{0.2cm} \begin{matrix}
   	0 & v(t)\\
   	\dfrac{1}{v(t)} & 0\\
\end{matrix}  & 0  \\
\hline
0 & 0 & I_{n-i-1}
\end{array} \right),
\end{align*} 
for $1\leq i \leq n-1$, where $v(t) \in \mathbb{Z}[t^{\pm 1}]$ with $v(t)$  invertible in $\mathbb{Z}[t^{\pm 1}]$.
\end{theorem}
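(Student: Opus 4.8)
The plan is to parametrize the unknown homogeneous $2$-local data by two $2\times 2$ matrices and then force their entries using the defining relations of $VST_n$. Since $\eta_1'$ extends $\eta_1$, the block of $\eta_1'(s_i)$ on rows and columns $i,i+1$ is the fixed matrix $S = \begin{pmatrix} 1-t & t \\ 2-t & t-1\end{pmatrix}$, which satisfies $\mathrm{tr}\,S = 0$ and $\det S = -1$, hence $S^2 = I_2$ and $S$ has eigenvalues $\pm 1$. Write $T$ and $V$ for the corresponding $2\times 2$ blocks of $\eta_1'(\tau_i)$ and $\eta_1'(\nu_i)$; both lie in $\mathrm{GL}_2(\mathbb{Z}[t^{\pm1}])$ since $\tau_i$ and $\nu_i$ are invertible in $VST_n$. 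The commuting relations for $|i-j|\ge 2$ hold automatically, because the nontrivial blocks then act on disjoint index pairs. Every remaining relation involves at most three consecutive strands, so it can be verified inside a single $3\times 3$ block built by placing the relevant $2\times 2$ matrices on strands $1,2$ and on $2,3$.

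First I would determine $T$. Relation \eqref{eqs20} for $\nu_i$ gives $V^2 = I_2$, which I set aside momentarily. Expanding relation \eqref{eqs22}, i.e.\ $s_1 s_2\,\tau_1 = \tau_2\,s_1 s_2$, as a $3\times 3$ matrix identity and comparing entries produces a small linear system for the entries of $T$; the decisive step is that the $(2,1)$-entry comparison isolates the factor $(1-t)^2$ multiplying the lower-left entry of $T$, forcing that entry to vanish, after which the remaining comparisons collapse to $T = I_2$. Thus $\eta_1'(\tau_i) = I_n$. Relation \eqref{eqs21}, which amounts to $TS = ST$, is then automatic, and relation \eqref{eqs25} reduces to $\nu_i^2 = \nu_{i+1}^2 = I$, which holds.

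Next I would pin down $V$. As an involution, $V$ is either $\pm I_2$ or a trace-zero matrix with $\det V = -1$. Writing $V = \begin{pmatrix} p & q \\ r & w\end{pmatrix}$ and expanding relation \eqref{eqs24}, $\nu_1 s_2\nu_1 = \nu_2 s_1\nu_2$, as a $3\times 3$ identity, the $(1,1)$-entry comparison immediately rules out $V = \pm I_2$ (it would force $1 = 1-t$). Hence $p + w = 0$, and substituting $qr = 1-p^2$ (from $V^2 = I_2$) into that same $(1,1)$ comparison reduces it to $p^2 t = 0$, so $p = w = 0$. Then $V$ is antidiagonal with $qr = 1$; setting $v = q$, invertibility forces $v$ to be a unit of $\mathbb{Z}[t^{\pm1}]$ and $r = 1/v$, giving exactly the asserted form. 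Finally I would confirm that this antidiagonal $V$ satisfies the braid-type detour relation \eqref{eqs23} by a direct $3\times 3$ check: both $\nu_1\nu_2\nu_1$ and $\nu_2\nu_1\nu_2$ equal the same matrix, the permutation pattern scaled by $v^2$ and $1/v^2$. This shows the listed matrices genuinely satisfy every relation, so they do define a representation.

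The main obstacle is computational rather than conceptual: organizing the several $3\times 3$ expansions of \eqref{eqs22} and \eqref{eqs24} and identifying which entry comparisons are decisive. The one point requiring care is arithmetic over $\mathbb{Z}[t^{\pm1}]$: the vanishing conclusions rely on $t$, $1-t$, and $(1-t)^2$ being nonzero in the ring, and the final step uses that $r = 1/v$ lies in $\mathbb{Z}[t^{\pm1}]$ only when $v$ is a unit, which is precisely the hypothesis recorded in the statement.
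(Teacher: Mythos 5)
Your proposal is correct and follows essentially the same route as the paper: parametrize the $2\times 2$ blocks of $\eta_1'(\tau_i)$ and $\eta_1'(\nu_i)$ by unknowns, impose the relations of $VST_n$ on three consecutive strands (the commuting relations for $|i-j|\geq 2$ being automatic for $2$-local representations), and solve the resulting system over $\mathbb{Z}[t^{\pm 1}]$. In fact your write-up is more explicit than the paper's, which only states that ``solving this system through direct algebraic methods yields the desired results''; your identification of the decisive entries --- the $(2,1)$-comparison in $s_1s_2\tau_1=\tau_2s_1s_2$ forcing the lower-left entry of $T$ to vanish via the factor $(1-t)^2$, and the $(1,1)$-comparison in $\nu_1s_2\nu_1=\nu_2s_1\nu_2$ reducing to $p^2t=0$ --- is accurate and fills in exactly the computation the paper leaves implicit.
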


\begin{proof}
Since $\eta_1'$ extends $\eta_1$, it follows that $$\eta_1'(s_i)= \left( \begin{array}{c|@{}c|c@{}}
   \begin{matrix}
     I_{i-1} 
   \end{matrix} 
      & 0 & 0 \\
      \hline
    0 &\hspace{0.2cm} \begin{matrix}
   	1-t & t\\
   	2-t & t-1\\
\end{matrix}  & 0  \\
\hline
0 & 0 & I_{n-i-1}
\end{array} \right) \text{ for  } 1\leq i \leq n-1.$$
Since $\eta_1'$ is a homogeneous $2$-local representation, we may set $$\eta_1'(\tau_i)= \left( \begin{array}{c|@{}c|c@{}}
   \begin{matrix}
     I_{i-1} 
   \end{matrix} 
      & 0 & 0 \\
      \hline
    0 &\hspace{0.2cm} \begin{matrix}
   	w(t) & x(t)\\
   	y(t) & z(t)\\
\end{matrix}  & 0  \\
\hline
0 & 0 & I_{n-i-1}
\end{array} \right) \text{ and } \eta_1'(\nu_i)= \left( \begin{array}{c|@{}c|c@{}}
   \begin{matrix}
     I_{i-1} 
   \end{matrix} 
      & 0 & 0 \\
      \hline
    0 &\hspace{0.2cm} \begin{matrix}
   	u(t) & v(t)\\
   	r(t) & s(t)\\
\end{matrix}  & 0  \\
\hline
0 & 0 & I_{n-i-1}
\end{array} \right)$$
for $1\leq i \leq n-1$, where $w(t), x(t), y(t), z(t), u(t), v(t), r(t), s(t) \in \mathbb{Z}[t^{\pm 1}]$. For $\eta'_1$ to be a representation of $VST_n$, it must preserve the defining relations in $VST_n$.  Note that since we are dealing with homogeneous $2$-local representations, it is enough to consider the following relations among the generators of $VST_n$:
\begin{equation} \label{eta_prime1_firstrel}
\nu_1^2=1,
\end{equation}
\begin{equation}
s_1\tau_1=\tau_1s_1,
\end{equation}
\begin{equation}
s_1s_2\tau_1=\tau_2s_1s_2,
\end{equation}
\begin{equation}
s_2s_1\tau_2=\tau_1s_2s_1,
\end{equation}
\begin{equation}
\nu_1\nu_2\nu_1=\nu_2\nu_1\nu_2,
\end{equation}
\begin{equation}
\nu_1s_2\nu_1=\nu_2s_1\nu_2,
\end{equation}
\begin{equation}\label{eta_prime1_lastrel}
\nu_1\tau_2\nu_1=\nu_2\tau_1\nu_2.
\end{equation}
By imposing that $\eta'_1$ preserves these relations, we derive a system of equations involving eight unknowns. Solving this system through direct algebraic methods yields the desired results.
\end{proof}

\begin{theorem} \label{eta1irr}
The representation $\eta'_1: VST_n \rightarrow \mathrm{GL}_n(\mathbb{Z}[t^{\pm 1}])$, where $t$ is an indeterminate, is irreducible if and only if $v(t)\neq 1$.
\end{theorem}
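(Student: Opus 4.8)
The plan is to work over the fraction field $K=\mathbb{Q}(t)$ of $\mathbb{Z}[t^{\pm1}]$ (treating $t$ as transcendental) and to analyze invariant subspaces of $V=K^n$ with standard basis $e_1,\dots,e_n$. Since $\eta_1'(\tau_i)=I_n$, a subspace is $\eta_1'(VST_n)$-invariant exactly when it is stable under all $\eta_1'(s_i)$ and all $\eta_1'(\nu_i)$, so the $\tau_i$ play no role. Three distinguished vectors organize everything: the common $(+1)$-eigenvector $w=e_1+\cdots+e_n$ of the matrices $\eta_1'(s_i)$; the (one-dimensional) image of $\eta_1'(s_i)-I_n$, spanned by $f_i=t\,e_i+(t-2)\,e_{i+1}$; and the image of $\eta_1'(\nu_i)-I_n$, spanned by $g_i=v(t)\,e_i-e_{i+1}$. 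A direct computation gives $\eta_1'(s_i)w=w$ and $\eta_1'(\nu_i)w=w+(v(t)-1)e_i+(v(t)^{-1}-1)e_{i+1}$.

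For the direction $v(t)=1\Rightarrow$ reducible, this last identity shows $\eta_1'(\nu_i)w=w$ when $v(t)=1$, and together with $\eta_1'(s_i)w=w$ and $\eta_1'(\tau_i)w=w$ we conclude that $\langle w\rangle$ is a one-dimensional invariant subspace. Since $n\ge 3$, it is proper and nonzero, so $\eta_1'$ is reducible.

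For the converse, assume $v(t)\neq 1$ and let $U\neq 0$ be invariant; I would prove $U=V$ in four steps. \emph{First}, from a nonzero $u=\sum a_j e_j\in U$ extract a local vector: applying $\eta_1'(s_i)-I_n$ gives $(a_{i+1}-a_i)f_i$, so if the coordinates of $u$ are not all equal then some $f_i\in U$; otherwise $u$ is a nonzero multiple of $w$, and applying $\eta_1'(\nu_1)-I_n$ yields a nonzero multiple of $g_1$ (this is where $v(t)\neq 1$ enters), giving $g_1\in U$. \emph{Second}, upgrade a $g$-vector to an $f$-vector: since $(\eta_1'(s_i)-I_n)g_i=-(1+v(t))f_i$, we get $f_i\in U$ whenever $v(t)\neq -1$; the residual case $v(t)=-1$ gives $e_1+e_2\in U$, and applying $\eta_1'(s_2)-I_n$ (which requires $n\ge 3$) then produces $f_2\in U$. \emph{Third}, once some $f_i\in U$, observe that $f_i$ and $\eta_1'(\nu_i)f_i=(t-2)v(t)\,e_i+t\,v(t)^{-1}e_{i+1}$ are linearly independent, since their coordinate determinant $t^2 v(t)^{-1}-(t-2)^2 v(t)$ vanishes only for $v(t)=\pm t/(t-2)$, which is impossible for a unit $v(t)\in\mathbb{Z}[t^{\pm1}]$; hence $\langle e_i,e_{i+1}\rangle\subseteq U$. \emph{Fourth}, propagate the block: if $e_{i+1}\in U$ then $e_{i+2}=(2-t)^{-1}\big(\eta_1'(s_{i+1})e_{i+1}-(1-t)e_{i+1}\big)\in U$, and symmetrically $e_{i-1}\in U$, so an induction gives every $e_j\in U$ and thus $U=V$.

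The main obstacle is the middle of the argument: guaranteeing that $U$ contains a full coordinate block $\langle e_i,e_{i+1}\rangle$ rather than only a one-dimensional defect line. This reduces to checking that the relevant pairs of extracted vectors are linearly independent, i.e.\ that a few explicit $2\times 2$ determinants are nonzero. The decisive inputs are that $v(t)$ is a unit of $\mathbb{Z}[t^{\pm1}]$ (so it cannot equal $\pm t/(t-2)$) and the standing hypothesis $v(t)\neq 1$; the degenerate value $v(t)=-1$ is exactly what forces the separate propagation step, which is where the assumption $n\ge 3$ is used. Once a block is secured, the final propagation is routine.
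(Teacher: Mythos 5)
Your proposal is correct and follows essentially the same strategy as the paper's proof: the reducible direction via the common fixed vector $(1,\dots,1)^T$, and the irreducible direction by chasing an invariant subspace $U$ — extracting rank-one images of $\eta_1'(s_i)-I_n$ and $\eta_1'(\nu_i)-I_n$, ruling out the degenerate values $v(t)=\pm t/(t-2)$ because they do not lie in $\mathbb{Z}[t^{\pm 1}]$, treating $v(t)=-1$ as a separate case (where $n\geq 3$ is needed), and propagating coordinate vectors to force $U$ to be everything. The only differences are cosmetic: the paper first conjugates by $\mathrm{diag}(v^{n-1},\dots,v,1)$ so that the $\nu_i$ become permutation matrices and works with the differences $e_i-e_{i+1}$, whereas you keep the original basis and work over the fraction field $\mathbb{Q}(t)$, which in fact makes the scalar divisions (e.g.\ by $2$, $t$, $2-t$) cleaner than in the paper's $\mathbb{Z}[t^{\pm1}]$ phrasing.
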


\begin{proof}
For the necessary condition suppose that $v(t)=1$. We can directly see that the column vector $(1,1,\ldots, 1)^T$ is invariant under $\eta_1'(s_i), \eta_1'(\tau_i),$ and $\eta_1'(\nu_i)$ for all $1\leq i \leq n-1$. Hence, $(1,1,\ldots, 1)^T$ is invariant under $\eta'_1$, and then $\eta'_1$ is reducible.

For the sufficient condition, suppose that $v(t) \neq 1$.  We introduce an equivalent representation of $\eta_1'$, namely $\eta_1''$, in the following manner. Let 
$$D = \mathrm{diag}\left(v(t)^{n-1}, v(t)^{n-2}, \ldots, v(t), 1\right),$$ 
where $\mathrm{diag}(a_1, a_2, \ldots, a_n)$ denotes the $n \times n$ diagonal matrix with diagonal entries $a_1, a_2, \ldots, a_n$. Define a new representation $\eta_1''$ of $VST_n$ by 
$$\eta_1'' = D^{-1}\eta_1'D.$$ 
By its definition, we can directly see that $\eta_1''$ is equivalent to $\eta'_1$, and we obtain that the action of $\eta_1''$ on the generators of $VST_n$ is given as follows:
\begin{align*}
\eta_1''(s_i) &= 
\left( \begin{array}{c|@{}c|c@{}}
   \begin{matrix} I_{i-1} \end{matrix} & 0 & 0 \\
   \hline
   0 & \hspace{0.2cm} \begin{matrix} 1-t & \dfrac{t}{v(t)}\\ (2-t)v(t) & t-1 \end{matrix} & 0 \\
   \hline
   0 & 0 & I_{n-i-1} 
\end{array} \right), \\
\eta_1''(\tau_i) &= I_n, \\ 
\eta_1''(\nu_i) &=
\left( \begin{array}{c|@{}c|c@{}}
   \begin{matrix} I_{i-1} \end{matrix} & 0 & 0 \\
   \hline
   0 & \hspace{0.2cm} \begin{matrix} 0 & 1\\ 1 & 0 \end{matrix} & 0 \\
   \hline
   0 & 0 & I_{n-i-1} 
\end{array} \right).
\end{align*}
Assume now, for the sake of contradiction, that $\eta_1'$ is reducible. Then, its equivalent representation $\eta_1''$ must also be reducible. Hence, there exists a nontrivial subspace $U \subset \mathbb{Z}^n[t^{\pm 1}]$ that is invariant under the action of $\eta_1''$. Choose a nonzero vector $x = (x_1, x_2, \ldots, x_n)^T \in U$. We observe that
$$\eta_1''(\nu_i)(x) - x = (x_{i+1} - x_i)(e_i - e_{i+1})\in U$$
for all $1 \leq i \leq n-1$, where $e_1, e_2, \ldots, e_n$ denote the canonical basis vectors of $\mathbb{Z}^n[t^{\pm 1}]$. We can select $x$ such that there exists $1 \leq j \leq n-1$ with $x_j \neq x_{j+1}$; otherwise $U$ would be generated by $(1, 1, \ldots, 1)^T$, contradicting $v(t) \neq 1$, since $U$ is invariant under $\eta_1''(s_i)$. Hence, there exists $1 \leq j \leq n-1$ such that $e_j - e_{j+1} \in U$. Moreover,
$$
\eta_1''(\nu_{j+1})(e_j - e_{j+1})-(e_j - e_{j+1}) = e_{j+1} - e_{j+2} \in U
$$ 
and
$$\eta_1''(\nu_{j-1})(e_j - e_{j+1})-(e_j - e_{j+1}) = e_{j-1} - e_j \in U.
$$
By continuing this recursive process, we get that
\begin{equation} \label{eq:diffs}
e_i - e_{i+1} \in U \text{ for all } 1 \leq i \leq n-1. \tag{E}
\end{equation}
Now, we can see that $$\eta_1''(s_1)(e_1-e_2)-\left( 1 - t - \dfrac{t}{v(t)} \right)(e_1-e_2)=\left( (2-t)v(t)+2 -2t - \dfrac{t}{v(t)} \right)e_2\in U.$$ But $e_2$ cannot be an element of $U$, since otherwise we get $e_i \in U$ for all $1\leq i \leq n$ by Equation (\ref{eq:diffs}), and so we get that $U=\mathbb{Z}^n[t^{\pm 1}]$, which is a contradiction. Thus, we have $$(2-t)v(t)+2 -2t - \dfrac{t}{v(t)}=0.$$ Solving this equation to obtain $v(t)$ yields that $v(t)=-1$ or $v(t)=\dfrac{t}{2-t}$. But $\dfrac{t}{2-t} \notin \mathbb{Z}[t^{\pm 1}]$ and so $v(t)=-1$. Now, we can see in our case that 
$$\eta_1''(s_1)(e_2 - e_3)+t(e_1-e_2)=-e_2-e_3\in U.$$
But $e_2-e_3\in U$ by Equation \eqref{eq:diffs}, which implies that $e_2\in U$, yielding a contradiction. This completes the proof that $\eta_1''$ is irreducible in this case, and so $\eta_1'$ is also irreducible.
\end{proof}

\begin{proposition}
The representation $\eta'_1: VST_n \rightarrow \mathrm{GL}_n(\mathbb{Z}[t^{\pm 1}])$, where $t$ is an indeterminate, is unfaithful for all $n\geq 3$.
\end{proposition}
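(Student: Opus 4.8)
The plan is to exhibit a nonidentity element of $VST_n$ lying in the kernel of $\eta_1'$, which shows directly that $\eta_1'$ fails to be injective. The natural candidate is any singular generator $\tau_i$: by Theorem~\ref{Theta1} we have $\eta_1'(\tau_i) = I_n = \eta_1'(1)$. Thus $\tau_i$ and the identity element $1$ of $VST_n$ have the same image under $\eta_1'$, and it remains only to verify that $\tau_i \neq 1$ in $VST_n$.

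For this I would invoke the permutation homomorphism $\pi : VST_n \to S_n$ introduced in Section~\ref{sec:new_monoid_and_group}, under which $\pi(\tau_i) = (i, i+1)$. Since $\pi$ is a group homomorphism we have $\pi(1) = \mathrm{id}_{S_n}$, whereas $\pi(\tau_i) = (i,i+1) \neq \mathrm{id}_{S_n}$. Hence $\tau_i \neq 1$ in $VST_n$, so $\tau_i$ is a nontrivial element of $\ker \eta_1'$. Taking $i = 1$, which is available for every $n \geq 3$, we conclude that $\eta_1'$ is not injective, and therefore unfaithful.

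There is no serious obstacle here: the whole argument reduces to the observation that $\eta_1'$ collapses every singular generator to the identity matrix, while $\pi$ detects each $\tau_i$ as nontrivial. The only point requiring any care is confirming $\tau_i \neq 1$, and this is settled immediately by $\pi$. One could equivalently remark that $\ker \eta_1'$ contains the (nontrivial) normal closure of $\{\tau_1, \dots, \tau_{n-1}\}$. I would avoid trying to produce a kernel element inside the pure subgroup $VSPT_n$, such as $\tau_1^2$ or a commutator of two singular generators, since proving that such an element is itself nontrivial in $VST_n$ would demand an extra invariant and is unnecessary for the stated claim.
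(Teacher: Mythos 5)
Your proof is correct and follows essentially the same route as the paper, which simply observes that every $\tau_i$ is sent to $I_n$ and declares the result straightforward. Your additional step of certifying $\tau_i \neq 1$ via the permutation homomorphism $\pi$ is a worthwhile bit of care that the paper leaves implicit, but it does not change the argument.
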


\begin{proof}
The proof is straightforward as the elements $\tau_i$, for all $1\leq i \leq n-1,$ map to the identity matrix $I_n$.
\end{proof}

\begin{theorem} \label{Theta2}
Let $\eta'_2: VST_n \rightarrow \mathrm{GL}_n(\mathbb{Z}[t^{\pm 1}])$, where $t$ is an indeterminate, be a homogeneous $2$-local representation of $VST_n$ that extends the representation $\eta_2$ given in Definition \ref{defeta2}. Then the action of $\eta'_2$ on the generators of $VST_n$ is defined as follows:
\begin{align*}
\eta'_2(s_i) &= \left( \begin{array}{c|@{}c|c@{}}
   \begin{matrix}
     I_{i-1} 
   \end{matrix} 
      & 0 & 0 \\
      \hline
    0 &\hspace{0.2cm} \begin{matrix}
   	0 & f(t)\\
   	\dfrac{1}{f(t)} & 0\\
\end{matrix}  & 0  \\
\hline
0 & 0 & I_{n-i-1}
\end{array} \right), \\
\eta'_2(\tau_i) &= \left( \begin{array}{c|@{}c|c@{}}
   \begin{matrix}
     I_{i-1} 
   \end{matrix} 
      & 0 & 0 \\
      \hline
    0 &\hspace{0.2cm} \begin{matrix}
   	w(t) & f^2(t)y(t)\\
   	y(t) & w(t)\\
\end{matrix}  & 0  \\
\hline
0 & 0 & I_{n-i-1}
\end{array} \right), \\
\eta'_2(\nu_i) &=  \left( \begin{array}{c|@{}c|c@{}}
   \begin{matrix}
     I_{i-1} 
   \end{matrix} 
      & 0 & 0 \\
      \hline
    0 &\hspace{0.2cm} \begin{matrix}
   	0 & v(t)\\
   	\dfrac{1}{v(t)} & 0\\
\end{matrix}  & 0  \\
\hline
0 & 0 & I_{n-i-1}
\end{array} \right),
\end{align*}  
for $1\leq i \leq n-1$, where $f(t), w(t), y(t), v(t) \in \mathbb{Z}[t^{\pm 1}]$, $f(t)$ and $v(t)$ are invertible in $\mathbb{Z}[t^{\pm 1}]$. 
\end{theorem}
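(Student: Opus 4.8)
The approach mirrors that of Theorem \ref{Theta1}. The plan is to begin by writing down the most general homogeneous $2$-local forms allowed. Since $\eta_2'$ extends $\eta_2$, the matrix block $M_s$ for $\eta_2'(s_i)$ is forced to be $\begin{pmatrix} 0 & f(t) \\ 1/f(t) & 0 \end{pmatrix}$ with $f(t)$ invertible, as in Definition \ref{defeta2}. I would then set $\eta_2'(\tau_i)$ to have block $M_\tau = \begin{pmatrix} w & x \\ y & z \end{pmatrix}$ and $\eta_2'(\nu_i)$ to have block $M_\nu = \begin{pmatrix} u & v \\ r & s \end{pmatrix}$, each with entries in $\mathbb{Z}[t^{\pm 1}]$. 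Because the representation is homogeneous $2$-local, the commuting relations $g_i h_j = h_j g_i$ for $|i-j|\geq 2$ are automatically satisfied, so it suffices to impose the remaining relations on the overlapping generators with indices $1$ and $2$, exactly the list \eqref{eta_prime1_firstrel}--\eqref{eta_prime1_lastrel} used before.

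First I would exploit the $\nu$-relations alone. The relation $\nu_1^2 = 1$ forces $M_\nu^2 = I_2$, and combined with the detour relation $\nu_1\nu_2\nu_1 = \nu_2\nu_1\nu_2$, I expect this to pin down $M_\nu$ to the off-diagonal form $\begin{pmatrix} 0 & v \\ 1/v & 0 \end{pmatrix}$ with $v$ invertible, precisely as claimed (and as occurs for $\eta_2$ itself). Next I would bring in the singular generator $\tau$. The relation $\tau_1 s_1 = s_1 \tau_1$ imposes that $M_\tau$ commutes with $M_s$; writing this out gives linear constraints tying $z$ to $w$ and $x$ to $y$. I anticipate that commuting with the specific off-diagonal $M_s$ yields $z = w$ and $x = f^2(t)\, y$, which is exactly the stated form $M_\tau = \begin{pmatrix} w & f^2 y \\ y & w \end{pmatrix}$. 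The remaining relations — the twin–singular relations \eqref{eqs22} (i.e.\ $s_1 s_2 \tau_1 = \tau_2 s_1 s_2$ and $s_2 s_1 \tau_2 = \tau_1 s_2 s_1$) and the mixed detour relations $\nu_1 s_2 \nu_1 = \nu_2 s_1 \nu_2$ and $\nu_1 \tau_2 \nu_1 = \nu_2 \tau_1 \nu_2$ — then need to be checked for consistency; I would verify that the already-determined forms satisfy them identically, or else extract any further constraints they impose on $f$, $w$, $y$, $v$.

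The main obstacle is that the twin–singular and mixed detour relations involve $4\times 4$ (or larger overlapping) blocks rather than a single $2\times 2$ block, since generators with indices $1$ and $2$ share a strand. Consequently these relations are genuine matrix equations on the overlapping $3\times 3$ corner, and verifying them — or showing they are automatically consistent with the forms already derived — is where the real computation lies. I would carry out this step carefully using the explicit off-diagonal structure of $M_s$ and $M_\nu$, which simplifies the products considerably. Once all seven relations \eqref{eta_prime1_firstrel}--\eqref{eta_prime1_lastrel} are simultaneously satisfied, solving the resulting system by direct algebraic elimination yields exactly the claimed parametrization, with the only surviving free parameters being the invertible element $f(t)$, the invertible element $v(t)$, and the arbitrary $w(t), y(t) \in \mathbb{Z}[t^{\pm 1}]$.
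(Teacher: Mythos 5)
Your proposal is correct and takes essentially the same approach as the paper, whose proof of Theorem \ref{Theta2} is simply the argument of Theorem \ref{Theta1} repeated: write general $2\times 2$ blocks for $\tau_i$ and $\nu_i$, impose the relations \eqref{eta_prime1_firstrel}--\eqref{eta_prime1_lastrel} (the distant commuting relations being automatic for disjoint blocks), and solve the resulting system by direct algebra. The only imprecision is your expectation that $\nu_1^2=1$ together with $\nu_1\nu_2\nu_1=\nu_2\nu_1\nu_2$ already pins $M_\nu$ to anti-diagonal form --- these two relations also admit $M_\nu=I_2$, which is excluded only by $\nu_1 s_2\nu_1=\nu_2 s_1\nu_2$ (since $s_1\neq s_2$ when $f(t)\neq 0$) --- but since your plan imposes that relation as well, the argument closes.
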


\begin{proof}
The proof is similar to the proof of Theorem \ref{Theta1}.
\end{proof}

\begin{theorem} \label{eta2irr}
The representation $\eta'_2: VST_n \rightarrow \mathrm{GL}_n(\mathbb{Z}[t^{\pm 1}])$, where $t$ is an indeterminate, is irreducible if and only if $f(t)\neq v(t)$ or $w(t)+\dfrac{f^2(t)y(t)}{v(t)} \neq 1$ or $v(t)y(t)+w(t)\neq 1$.
\end{theorem}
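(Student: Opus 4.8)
The plan is to mimic the structure of the proof of Theorem~\ref{eta1irr}, treating the two directions separately. The statement asks to show irreducibility holds precisely when at least one of three conditions holds; equivalently, $\eta_2'$ is reducible if and only if all three conditions fail, i.e. $f(t)=v(t)$ \emph{and} $w(t)+\tfrac{f^2(t)y(t)}{v(t)}=1$ \emph{and} $v(t)y(t)+w(t)=1$ simultaneously. So first I would prove the contrapositive of the ``only if'' direction: assuming these three equalities hold, I would exhibit an explicit common invariant subspace. The natural candidate, by analogy with the vector $(1,1,\ldots,1)^T$ used for $\eta_1'$, is the span of a single vector built from $f(t)$; since $\eta_2'(s_i)$ and $\eta_2'(\nu_i)$ are anti-diagonal ``swap'' matrices (with weights $f(t)$ and $v(t)=f(t)$), I expect the invariant line to be spanned by a vector such as $(f^{n-1}, f^{n-2}, \ldots, f, 1)^T$ or $(1, f, f^2, \ldots, f^{n-1})^T$ (up to checking orientation). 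The three algebraic equalities are exactly what is needed to force this vector to be an eigenvector of $\eta_2'(\tau_i)$ as well; I would verify the local $2\times 2$ action on each adjacent pair of coordinates and confirm invariance under all three families of generators, establishing reducibility.

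For the converse (``if'' direction), I would conjugate by a diagonal matrix to normalize the representation, exactly as in Theorem~\ref{eta1irr}. Setting $D = \mathrm{diag}(f(t)^{n-1}, f(t)^{n-2}, \ldots, f(t), 1)$ and $\eta_2'' = D^{-1}\eta_2' D$, the matrices $\eta_2''(s_i)$ and $\eta_2''(\nu_i)$ become cleaner: the $s_i$-block acquires the swap $\left(\begin{smallmatrix} 0 & 1 \\ 1 & 0\end{smallmatrix}\right)$ while the $\nu_i$-block becomes $\left(\begin{smallmatrix} 0 & v/f \\ f/v & 0\end{smallmatrix}\right)$, and the $\tau_i$-block is conjugated correspondingly. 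Assuming $\eta_2'$ (hence $\eta_2''$) is reducible, I would take a nonzero vector $x=(x_1,\ldots,x_n)^T$ in an invariant subspace $U$ and push it through the generators to generate enough vectors. The key mechanism, as before, is that differences $\eta_2''(g_i)(x) - \lambda x$ for suitable scalars $\lambda$ land back in $U$ and have controlled support, allowing a recursive argument to produce vectors of the form $e_i - (\text{scalar})\, e_{i+1}$ across all adjacent indices, and then to isolate a single basis vector $e_j$, which would force $U = \mathbb{Z}^n[t^{\pm 1}]$ and contradict proper invariance.

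The main obstacle, and the place where the three-part hypothesis genuinely enters, is the recursive/contradiction step in the converse. In the $\eta_1'$ case the $\tau_i$ were trivial, so only the $s_i$ and $\nu_i$ blocks governed the recursion, and a single scalar equation pinned down $v(t)$. Here all three generator families act nontrivially and the $\tau_i$-block couples $w(t)$, $y(t)$, and $f(t)$ together, so the recursion will produce a \emph{system} of constraints rather than one equation. I expect that carrying the invariance argument to its conclusion forces exactly the three equalities $f=v$, $w+\tfrac{f^2 y}{v}=1$, and $vy+w=1$ to hold \emph{simultaneously} as the only way a proper invariant subspace can survive; tracing which generator action yields which of these three equations, and confirming no fourth constraint arises (and that no constraint lies outside $\mathbb{Z}[t^{\pm 1}]$, as happened with the spurious root $\tfrac{t}{2-t}$ before), is the delicate bookkeeping. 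The plan is therefore to organize the converse as: normalize by $D$, extract a structured vector in $U$, run the recursion to reach a single basis vector unless specific scalar obstructions vanish, and show those obstructions vanishing is equivalent to the failure of all three stated conditions, completing the contrapositive.
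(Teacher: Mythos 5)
Your proposal follows essentially the same route as the paper's proof: one direction via an explicit invariant line (your vector $(f^{n-1},\ldots,f,1)^T$ is exactly the image under the diagonal conjugation of the paper's vector $(1,1,\ldots,1)^T$), and the converse by normalizing with a diagonal matrix, running the swap-generated recursion to obtain $e_i - e_{i+1} \in U$, and then isolating a nonzero scalar multiple of $e_2$ in $U$ to force $U = \mathbb{Z}^n[t^{\pm 1}]$, a contradiction. The only difference is cosmetic: you conjugate by powers of $f(t)$ so that $s_i$ becomes the plain swap (the paper uses powers of $v(t)$, making $\nu_i$ the swap), under which the two $\tau$-conditions collapse to the single obstruction $w(t)+f(t)y(t) \neq 1$ once $f(t)=v(t)$, and this covers precisely the same cases as the paper's three-case analysis.
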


\begin{proof}
As in the proof of Theorem \ref{eta1irr}, we introduce an equivalent representation of $\eta'_2$, denoted by $\eta''_2$, defined as follows. Let 
$$C = \mathrm{diag}\left(v(t)^{n-1}, v(t)^{n-2}, \ldots, v(t), 1\right),$$
and set 
$$\eta_2'' = C^{-1}\eta_2' C.$$
Note that we may describe the action of $\eta_2''$ on the generators of $VST_n$ as given below:
$$\eta_2''(s_i)=
\left(
\begin{array}{c|@{}c|c@{}}
\begin{matrix} I_{i-1} \end{matrix} & 0 & 0\\
\hline
0 & \hspace{0.2cm}\begin{matrix}
0 & \dfrac{f(t)}{v(t)}\\[4pt]
\dfrac{v(t)}{f(t)} & 0
\end{matrix} & 0\\
\hline
0 & 0 & I_{n-i-1}
\end{array}
\right),$$
$$\eta_2''(\tau_i)=
\left(
\begin{array}{c|@{}c|c@{}}
\begin{matrix} I_{i-1} \end{matrix} & 0 & 0\\
\hline
0 & \hspace{0.2cm}\begin{matrix}
w(t) & \dfrac{f^2(t)y(t)}{v(t)}\\[4pt]
v(t)y(t) & w(t)
\end{matrix} & 0\\
\hline
0 & 0 & I_{n-i-1}
\end{array}
\right),$$
$$\eta_2''(\nu_i)=
\left(
\begin{array}{c|@{}c|c@{}}
\begin{matrix} I_{i-1} \end{matrix} & 0 & 0\\
\hline
0 & \hspace{0.2cm}\begin{matrix}
0 & 1\\
1 & 0
\end{matrix} & 0\\
\hline
0 & 0 & I_{n-i-1}
\end{array}
\right)$$
for all $1 \leq i \leq n-1$.

\vspace{0.1cm}

For the necessary condition, suppose that $f(t)=v(t)$, $w(t)+\dfrac{f^2(t)y(t)}{v(t)}= 1,$ 
and $v(t)y(t)+w(t)= 1.$ Then, we can directly observe that the column vector $(1,1,\ldots,1)^T$ is invariant under $\eta_2''(s_i)$, $\eta_2''(\tau_i),$ and $\eta_2''(\nu_i)$, for all $1\leq i \leq n-1$. Hence $\eta''_2$ is reducible, which implies that $\eta'_2$ is reducible as well.


Now, for the sufficient condition, suppose that $f(t)\neq v(t)$ or $w(t)+\dfrac{f^2(t)y(t)}{v(t)} \neq 1$ or $v(t)y(t)+w(t)\neq 1.$
Assume, for the sake of contradiction, that $\eta_2'$ is reducible. Then, its equivalent representation $\eta_2''$, introduced above, must also be reducible. It follows that there exists a nontrivial subspace $U \subset \mathbb{Z}^n[t^{\pm 1}]$ that is invariant under the action of $\eta_2''$. Repeating the same reasoning as in the proof of Theorem \ref{eta1irr}, and under our condition, we get that the equation (\ref{eq:diffs}) holds here as well. Recall that the equation (\ref{eq:diffs}) is given by the following:
\begin{equation} \label{eq:diffs}
e_i - e_{i+1} \in U \text{ for all } 1 \leq i \leq n-1. \tag{E}
\end{equation}
We now consider each case separately.
\begin{itemize}
\item[(a)] Suppose $f(t)\neq v(t)$. In this case, we have
$$\eta_2''(s_1)(e_2-e_3)-\dfrac{f(t)}{v(t)}(e_1-e_2)-(e_2-e_3) = \left(\dfrac{f(t)}{v(t)}-1\right)e_2 \in U.$$
Since $\dfrac{f(t)}{v(t)}-1\neq 0$, it follows that $e_2 \in U$. By Equation (\ref{eq:diffs}), this implies that $e_i \in U$ for all $1 \leq i \leq n-1$, which is a contradiction.
\item[(b)] Suppose $w(t)+\dfrac{f^2(t)y(t)}{v(t)} \neq 1$. Here, we have
$$
\eta_2''(\tau_1)(e_2-e_3)-\dfrac{f^2(t)y(t)}{v(t)}(e_1-e_2)-(e_2-e_3)= \left(w(t)+\dfrac{f^2(t)y(t)}{v(t)}-1\right)e_2 \in U.$$

Since $w(t)+\dfrac{f^2(t)y(t)}{v(t)}-1\neq 0$, we again get $e_2 \in U$, leading to the same contradiction as above.
\item[(c)] Suppose $v(t)y(t)+w(t)\neq 1$. Then,
$$\eta_2''(\tau_2)(e_1-e_2)-(e_1-e_2)-v(t)y(t)(e_2-e_3) = (-w(t)-v(t)y(t)+1)e_2 \in U.$$
Again, as $-w(t)-v(t)y(t)+1\neq 0$, we get $e_2 \in U$, giving the same contradiction.
\end{itemize}

In all cases, we arrive at a contradiction, showing that no nontrivial invariant subspace $U$ exists. Therefore, $\eta_2''$ (and equivalently $\eta_2'$) is irreducible in this case, which completes the proof.
\end{proof}

\begin{proposition}
The representation $\eta'_2: VST_n \rightarrow \mathrm{GL}_n(\mathbb{Z}[t^{\pm 1}])$, where $t$ is an indeterminate, is unfaithful for all $n\geq 3$.
\end{proposition}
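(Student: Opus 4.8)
The plan is to exhibit a single nontrivial element of $\ker\eta'_2$, thereby showing the representation fails to be injective. Since the group $VST_n$ does not depend on the representation parameters $f(t), w(t), y(t), v(t)$, it suffices to produce one word that is nontrivial in $VST_n$ yet is sent to the identity matrix for \emph{every} admissible choice of parameters.

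First I would record that the image of each product $s_i\nu_i$ is diagonal. Multiplying the antidiagonal blocks of $\eta'_2(s_i)$ and $\eta'_2(\nu_i)$ from Theorem \ref{Theta2} gives
$$\eta'_2(s_i\nu_i) = \mathrm{diag}\Big(1,\dots,1,\tfrac{f(t)}{v(t)},\tfrac{v(t)}{f(t)},1,\dots,1\Big),$$
with the two nontrivial entries in positions $i$ and $i+1$. Because all of these are diagonal matrices, they commute pairwise; in particular $\eta'_2(s_1\nu_1)$ and $\eta'_2(s_2\nu_2)$ commute (both products exist precisely because $n\geq 3$). Consequently the commutator $c := [\,s_1\nu_1,\, s_2\nu_2\,]$ satisfies $\eta'_2(c)=I_n$, so $c\in\ker\eta'_2$.

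It then remains to verify that $c\neq 1$ in $VST_n$, which is the only genuine point of the argument. The permutation map $\pi$ cannot detect $c$, since $s_i\nu_i\in VSPT_n=\ker\pi$. Instead I would introduce the auxiliary homomorphism $\chi\colon VST_n\to S_n$ determined by $s_i\mapsto e$, $\tau_i\mapsto e$, and $\nu_i\mapsto(i,i+1)$. A routine check against the relations of Definition \ref{def:VSTM} shows $\chi$ is well defined: the involutive and commuting relations are immediate, the detour relation $\nu_i\nu_j\nu_i=\nu_j\nu_i\nu_j$ becomes the braid relation among adjacent transpositions in $S_n$, and every relation containing an $s$ or a $\tau$ collapses to a trivial identity because those generators map to $e$. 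Under $\chi$ we get $\chi(s_1\nu_1)=(1\,2)$ and $\chi(s_2\nu_2)=(2\,3)$, whose commutator is $[(1\,2),(2\,3)]=(1\,3\,2)\neq e$. Hence $\chi(c)\neq e$, so $c\neq 1$ in $VST_n$.

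Combining the two observations, $c$ is a nontrivial element of $\ker\eta'_2$, so $\eta'_2$ is unfaithful for all $n\geq 3$. I expect the main (indeed only) obstacle to be confirming that the kernel element $c$ is genuinely nontrivial in $VST_n$; this is handled cleanly by the homomorphism $\chi$, which separates $s_1\nu_1$ and $s_2\nu_2$ even though both lie in the pure subgroup $VSPT_n$. Note also that the argument is uniform in the parameters: when $f(t)=v(t)$ one even has $s_i\nu_i\in\ker\eta'_2$ directly, but the commutator $c$ lies in the kernel regardless of the values of $f(t)$ and $v(t)$.
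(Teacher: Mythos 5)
Your proof is correct, but it takes a genuinely different route from the paper's. The paper disposes of the proposition in one line: the restriction of $\eta'_2$ to the twin subgroup $T_n$ of $VST_n$ is exactly $\eta_2$, which Theorem~\ref{eta2fai} (quoted from earlier work) asserts is unfaithful for all $n\geq 3$, so $\eta'_2$ inherits unfaithfulness. That argument is shorter, but it leans on two outside inputs: the cited unfaithfulness of $\eta_2$, and the implicit assumption that the natural map $T_n \to VST_n$ is injective (the paper asserts that $VST_n$ is a group extension of $T_n$ but does not prove the embedding in this paper); without injectivity, a nontrivial element of $\ker\eta_2$ need not remain nontrivial in $VST_n$. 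Your argument is self-contained: you compute that each $\eta'_2(s_i\nu_i)$ is diagonal, hence the commutator $c=[\,s_1\nu_1,\,s_2\nu_2\,]$ lies in $\ker\eta'_2$ for every admissible choice of $f(t),w(t),y(t),v(t)$, and you certify $c\neq 1$ via the auxiliary homomorphism $\chi\colon VST_n\to S_n$ with $s_i,\tau_i\mapsto e$ and $\nu_i\mapsto(i,i+1)$, whose well-definedness you correctly check against the relations of Definition~\ref{def:VSTM} (your observation that $\pi$ itself cannot detect $c$, forcing the introduction of $\chi$, is exactly the right point). What your approach buys: independence from the embedding $T_n\hookrightarrow VST_n$ and from external theorems, an explicit kernel element that involves the virtual generators and so is not visible inside the twin subgroup at all, and a reusable separating homomorphism $\chi$. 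What the paper's approach buys is brevity and a clean reduction to known results. Both proofs are valid.
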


\begin{proof}
Since Theorem \ref{eta2fai} demonstrates that the restriction of $\eta_2'$ to the twin group $T_n$ is unfaithful, the claim follows immediately.
\end{proof}

In summary, for all $n \geq 3$, we have classified the homogeneous $2$-local representations of $VST_n$ that extend the representations $\eta_1$ and $\eta_2$ of $T_n$, and examined their properties with respect to irreducibility and faithfulness. This naturally leads to the following question.

\begin{question}
Is it possible to construct non-local representations of $VST_n$ that extend the representations $\eta_1$ and $\eta_2$ of $T_n$?
\end{question}

\section{Complex Homogeneous Local Representations of the Virtual Singular Twin Group} \label{sec:complex_rep}

In this section, we determine all complex homogeneous $2$-local representations of $VST_n$ for all $n\geq 3$. 

\begin{theorem} \label{TheoSVTn}
Let $n\geq 3$ and let $\Upsilon: VST_n \rightarrow \mathrm{GL}_n(\mathbb{C})$ be a complex homogeneous $2$-local representation of $VST_n$. Then $\Upsilon$ is equivalent to one of the following six representations $\Upsilon_j, 1\leq j \leq 6$, where 
$$\Upsilon_j(s_i) =\left( \begin{array}{c|@{}c|c@{}}
   \begin{matrix}
     I_{i-1} 
   \end{matrix} 
      & 0 & 0 \\
      \hline
    0 &\hspace{0.2cm} S_j  & 0  \\
\hline
0 & 0 & I_{n-i-1}
\end{array} \right),$$ $$\Upsilon_j(\tau_i) =\left( \begin{array}{c|@{}c|c@{}}
   \begin{matrix}
     I_{i-1} 
   \end{matrix} 
      & 0 & 0 \\
      \hline
    0 &\hspace{0.2cm} T_j  & 0  \\
\hline
0 & 0 & I_{n-i-1}
\end{array} \right),$$ and $$\Upsilon_j(\nu_i) =\left( \begin{array}{c|@{}c|c@{}}
   \begin{matrix}
     I_{i-1} 
   \end{matrix} 
      & 0 & 0 \\
      \hline
    0 &\hspace{0.2cm} V_j  & 0  \\
\hline
0 & 0 & I_{n-i-1}
\end{array} \right).\vspace*{0.1cm}$$
\begin{itemize}
\item[(1)] $\Upsilon_1: VST_n \rightarrow \mathrm{GL}_n(\mathbb{C})$ such that 
$$S_1=\left( \begin{array}{@{}c@{}}
\begin{matrix}
   		0 & b\\
   		\dfrac{1}{b} & 0
   		\end{matrix}
\end{array} \right), T_1=\left( \begin{array}{@{}c@{}}
\begin{matrix}
   		x & y\\
   		\dfrac{y}{b^2} & x
   		\end{matrix}
\end{array} \right), \text{and } V_1=\left( \begin{array}{@{}c@{}}
\begin{matrix}
   		0 & v\\
   		\dfrac{1}{v} & 0
   		\end{matrix}
\end{array} \right),$$ where $b,x,y,v \in \mathbb{C}, b\neq 0, x^2-\dfrac{y^2}{b^2}\neq 0 ,v\neq 0$.\\
\item[(2)] $\Upsilon_2: VST_n \rightarrow \mathrm{GL}_n(\mathbb{C})$ such that 
$$S_2=\left( \begin{array}{@{}c@{}}
\begin{matrix}
   		-\sqrt{1-bc} & b\\
   		c & \sqrt{1-bc}
   		\end{matrix}
\end{array} \right), T_2=\left( \begin{array}{@{}c@{}}
\begin{matrix}
   		1 & 0\\
   		0 & 1
   		\end{matrix}
\end{array} \right), \text{and } V_2=\left( \begin{array}{@{}c@{}}
\begin{matrix}
   		0 & v\\
   		\dfrac{1}{v} & 0
   		\end{matrix}
\end{array} \right),$$ where $b,c,v \in \mathbb{C}, v\neq 0$.\\
\item[(3)] $\Upsilon_3: VST_n \rightarrow \mathrm{GL}_n(\mathbb{C})$ such that 
$$S_3=\left( \begin{array}{@{}c@{}}
\begin{matrix}
   		\sqrt{1-bc} & b\\
   		c & -\sqrt{1-bc}
   		\end{matrix}
\end{array} \right), T_3=\left( \begin{array}{@{}c@{}}
\begin{matrix}
   		1 & 0\\
   		0 & 1
   		\end{matrix}
\end{array} \right), \text{and } V_3=\left( \begin{array}{@{}c@{}}
\begin{matrix}
   		0 & v\\
   		\dfrac{1}{v} & 0
   		\end{matrix}
\end{array} \right),$$ where $b,c,v \in \mathbb{C}, v\neq 0$.\\
\item[(4)] $\Upsilon_4: VST_n \rightarrow \mathrm{GL}_n(\mathbb{C})$ such that 
$$S_4=\left( \begin{array}{@{}c@{}}
\begin{matrix}
   		-1 & 0\\
   		0 & -1
   		\end{matrix}
\end{array} \right), T_4=\left( \begin{array}{@{}c@{}}
\begin{matrix}
   		1 & 0\\
   		0 & 1
   		\end{matrix}
\end{array} \right), \text{and } V_4=\left( \begin{array}{@{}c@{}}
\begin{matrix}
   		0 & v\\
   		\dfrac{1}{v} & 0
   		\end{matrix}
\end{array} \right),$$ where $v \in \mathbb{C}, v\neq 0$.\\
\item[(5)] $\Upsilon_5: VST_n \rightarrow \mathrm{GL}_n(\mathbb{C})$ such that 
$$S_5=\left( \begin{array}{@{}c@{}}
\begin{matrix}
   		1 & 0\\
   		0 & 1
   		\end{matrix}
\end{array} \right), T_5=\left( \begin{array}{@{}c@{}}
\begin{matrix}
   		1 & 0\\
   		0 & 1
   		\end{matrix}
\end{array} \right), \text{and } V_5=\left( \begin{array}{@{}c@{}}
\begin{matrix}
   		0 & v\\
   		\dfrac{1}{v} & 0
   		\end{matrix}
\end{array} \right),$$ where $v \in \mathbb{C}, v\neq 0$.\\
\item[(6)] $\Upsilon_6: VST_n \rightarrow \mathrm{GL}_n(\mathbb{C})$ such that 
$$S_6=\left( \begin{array}{@{}c@{}}
\begin{matrix}
   		1 & 0\\
   		0 & 1
   		\end{matrix}
\end{array} \right), T_6=\left( \begin{array}{@{}c@{}}
\begin{matrix}
   		1 & 0\\
   		0 & 1
   		\end{matrix}
\end{array} \right), \text{and } V_6=\left( \begin{array}{@{}c@{}}
\begin{matrix}
   		1 & 0\\
   		0 & 1
   		\end{matrix}
\end{array} \right).$$
\end{itemize}
\end{theorem}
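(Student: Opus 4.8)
The plan is to reduce the classification to a finite computation with $2\times 2$ blocks and then run a structured case analysis. Since $\Upsilon$ is homogeneous and $2$-local it is determined by a single triple of invertible matrices $S,T,V\in\mathrm{GL}_2(\mathbb{C})$ (the common blocks of $\Upsilon(s_i),\Upsilon(\tau_i),\Upsilon(\nu_i)$), and every relation of $VST_n$ between generators with indices differing by at least $2$ holds automatically, since the corresponding blocks occupy disjoint rows and columns. Thus it suffices to impose the relations coming from indices in $\{1,2\}$, i.e. to solve, inside $\mathrm{GL}_3(\mathbb{C})$, the equations $S^2=I$, $V^2=I$, $ST=TS$ (from $\tau_1 s_1=s_1\tau_1$), the mixed relation $s_1\tau_2 s_1=s_2\tau_1 s_2$ (to which both instances of \eqref{eqs22} reduce via $s_i^2=1$), the braid relation $\nu_1\nu_2\nu_1=\nu_2\nu_1\nu_2$, and the two detour relations $\nu_1 s_2\nu_1=\nu_2 s_1\nu_2$ and $\nu_1\tau_2\nu_1=\nu_2\tau_1\nu_2$. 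Any solution of this system yields a representation of $VST_n$ for every $n$, so the six families will be exhaustive.

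First I would classify $V$. By Cayley--Hamilton, $V^2=I$ forces $V=\pm I$ or $\mathrm{tr}\,V=0,\ \det V=-1$; writing a non-scalar involution as $V=\begin{pmatrix}\beta&\gamma\\\delta&-\beta\end{pmatrix}$, a direct expansion of $\nu_1\nu_2\nu_1$ and $\nu_2\nu_1\nu_2$ in $\mathrm{GL}_3$ and comparison of the $(1,1)$ and $(2,2)$ entries forces $\beta=0$, hence $\gamma\delta=1$ and $V=\begin{pmatrix}0&v\\1/v&0\end{pmatrix}$. Among the scalar cases, $V=I$ satisfies the braid relation while $V=-I$ fails it. So $V=I$, or $V$ has the off-diagonal form appearing in $\Upsilon_1$--$\Upsilon_5$.

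The analysis then splits on $V$. If $V=I$ then $\Upsilon(\nu_i)=I_n$, and the two detour relations collapse to the $3\times 3$ identities $\Upsilon(s_2)=\Upsilon(s_1)$ and $\Upsilon(\tau_2)=\Upsilon(\tau_1)$; comparing entries forces $S=I$ and $T=I$, which is $\Upsilon_6$. If instead $V$ is off-diagonal, the key observation is that $\nu_1 X_2\nu_1=\nu_2 X_1\nu_2$ holds for an \emph{arbitrary} $2\times 2$ block $X$, so both detour relations are satisfied identically and impose no constraint on $S$ or $T$. The remaining constraints are therefore $S^2=I$, $ST=TS$, and the mixed relation. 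Solving $S^2=I$ gives $S=I$, $S=-I$, or the trace-zero family $S=\begin{pmatrix}\alpha&b\\c&-\alpha\end{pmatrix}$ with $\alpha^2+bc=1$ (written in $\Upsilon_2,\Upsilon_3$ as $\alpha=\mp\sqrt{1-bc}$); since a non-scalar $S$ has commutant $\{pI+qS\}$, the relation $ST=TS$ already confines $T$ to this two-parameter family.

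The heart of the proof, and the step I expect to be the main obstacle, is extracting $T$ from the mixed relation $s_1\tau_2 s_1=s_2\tau_1 s_2$. Expanding both sides in $\mathrm{GL}_3$ with $S=\begin{pmatrix}\alpha&b\\c&-\alpha\end{pmatrix}$ and a general $T$, and using $\alpha^2+bc=1$ to rewrite $1-bc=\alpha^2$, one finds that every off-diagonal equation carries a factor of $\alpha$ while the two diagonal equations reduce to $\alpha^2(t_1-1)=0$ and $\alpha^2(t_4-1)=0$. This produces a clean dichotomy: when $\alpha\neq 0$ the system forces $T=I$, yielding $\Upsilon_2$ and $\Upsilon_3$ (and the scalar subcases $S=\pm I$ likewise give $T=I$, hence $\Upsilon_4$ and $\Upsilon_5$); when $\alpha=0$, so $bc=1$ and $S$ is itself off-diagonal, all equations hold automatically and $T$ ranges over the full invertible commutant $\{pI+qS\}$, which is exactly the family $\begin{pmatrix}x&y\\y/b^2&x\end{pmatrix}$ with $x^2-y^2/b^2\neq 0$ of $\Upsilon_1$. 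The delicate points are bookkeeping which equations are genuine versus automatically satisfied, and recognizing that the vanishing of $\alpha$ (equivalently, $S$ being off-diagonal, so that the $s$-detour $s_1\tau_2 s_1=s_2\tau_1 s_2$ behaves like the $\nu$-detour) is precisely what frees $T$. Assembling the cases gives the six families, with no irreducibility or faithfulness input required.
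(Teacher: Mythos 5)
Your proposal is correct and takes essentially the same approach as the paper: both reduce, by homogeneity and $2$-locality, to imposing only the index-$\{1,2\}$ relations ($s_1^2=1$, $\nu_1^2=1$, $s_1\tau_1=\tau_1 s_1$, the mixed twin--singular relation, the $\nu$-braid relation, and the two detour relations) on the $2\times 2$ blocks $S$, $T$, $V$, and then solve the resulting polynomial system. The paper leaves the solving step to ``direct algebraic methods,'' and your structured case analysis --- classifying $V$ via Cayley--Hamilton together with the braid relation, observing that an off-diagonal involution renders the detour-type relations vacuous, and splitting on the trace parameter $\alpha$ of $S$ (with $\alpha\neq 0$ forcing $T=I$ and $\alpha=0$ freeing $T$ into the invertible commutant) --- is a correct execution of exactly that computation, landing precisely on the six families $\Upsilon_1,\ldots,\Upsilon_6$.
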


\begin{proof}
Set
$$\Upsilon(s_i) =\left( \begin{array}{c|@{}c|c@{}}
   \begin{matrix}
     I_{i-1} 
   \end{matrix} 
      & 0 & 0 \\
      \hline
    0 &\hspace{0.2cm} S  & 0  \\
\hline
0 & 0 & I_{n-i-1}
\end{array} \right),$$ $$\Upsilon(\tau_i) =\left( \begin{array}{c|@{}c|c@{}}
   \begin{matrix}
     I_{i-1} 
   \end{matrix} 
      & 0 & 0 \\
      \hline
    0 &\hspace{0.2cm} T  & 0  \\
\hline
0 & 0 & I_{n-i-1}
\end{array} \right),$$ and $$\Upsilon(\nu_i) =\left( \begin{array}{c|@{}c|c@{}}
   \begin{matrix}
     I_{i-1} 
   \end{matrix} 
      & 0 & 0 \\
      \hline
    0 &\hspace{0.2cm} V  & 0  \\
\hline
0 & 0 & I_{n-i-1}
\end{array} \right),$$
where $$S=\left( \begin{array}{@{}c@{}}
  \begin{matrix}
   		a & b\\
   		c & d
   		\end{matrix}
\end{array} \right), T=\left( \begin{array}{@{}c@{}}
  \begin{matrix}
   		x & y\\
   		z & t
   		\end{matrix}
\end{array} \right), \text{ and }  V=\left( \begin{array}{@{}c@{}}
  \begin{matrix}
   		u & v\\
   		r & s
   		\end{matrix}
\end{array} \right),$$
with $a,b,c,d,x,y,z,t,u,v,r,s \in \mathbb{C}, ad-bc \neq 0, xt-yz\neq0,$ and $us-rv \neq 0$. To determine the matrices $S, T$ and $V$, we impose the condition that $\Upsilon$ preserves the defining relations  for $VST_n$. As in the proof of Theorem~\ref{Theta1}, since $\Upsilon$ is a complex homogeneous $2$-local representation, it suffices to consider only the relations~\eqref{eta_prime1_firstrel} through~\eqref{eta_prime1_lastrel}, together with $s_1^2 = 1$, among the generators of $VST_n$, as all other relations yield analogous equations.
Imposing that $\Upsilon$ is a representation leads to a system of equations in twelve unknowns. Solving this system using direct algebraic methods produces the desired results.
\end{proof}


We conclude this section by posing the following natural question.

\begin{question}
Let \(n\geq 3\) and let \(\Upsilon: VST_n \rightarrow \mathrm{GL}_n(\mathbb{C})\) be a complex homogeneous \(2\)-local representation of \(VST_n\). Under what conditions is \(\Upsilon\)  irreducible?
\end{question}

\section{Other Presentations for the Virtual Singular Twin Monoid} \label{sec:different_presentations}

The scope of this section is to provide additional presentations for the virtual singular twin monoid $VSTM_n$.

\subsection{A Reduced Presentation for $VSTM_n$}

In~\cite[Section 3] {KL1}, L. H. Kauffman and S. Lambropoulou provided a reduced presentation for the virtual braid group. Similarly, C. Caprau et al. provided a reduced presentation for the virtual singular braid monoid in~\cite[Section 4]{Caprau}. Inspired by these results, in this section we give a reduced presentation for the virtual singular twin monoid on $n$ strands, $VSTM_n$. This presentation uses fewer generators, specifically $s_1$ and $\tau_1$, together with all the virtual generators $\nu_1,\ldots, \nu_{n-1}$, and assumes the following defining relations for the elements $s_{i+1}$ and $\tau_{i+1}$, for all $1\leq i \leq n-2$:
\begin{eqnarray}
s_{i+1}   & := & (\nu_i\ldots \nu_2\nu_1)(\nu_{i+1}\ldots \nu_3\nu_2)s_1  (\nu_2\nu_3\ldots \nu_{i+1})(\nu_1\nu_2\ldots \nu_i)  \label{A14} \\  
\tau_{i+1} & := & (\nu_i\ldots \nu_2\nu_1)(\nu_{i+1}\ldots \nu_3 \nu_2)\tau_1 (\nu_2 \nu_3\ldots \nu_{i+1})(\nu_1\nu_2\ldots \nu_i) \label{A15} 
\end{eqnarray}
 As shown in Fig.~\ref{fig:DRtau}, the defining relations are the braid form versions of the detour move. In other words, we detour the generators $s_{i+1}$ and $\tau_{i+1}$ to the left side of the braid using the strands $1, 2, \dots, i$.

\begin{figure}[h!]
\[ \raisebox{-35pt}{\includegraphics[height=0.8in]{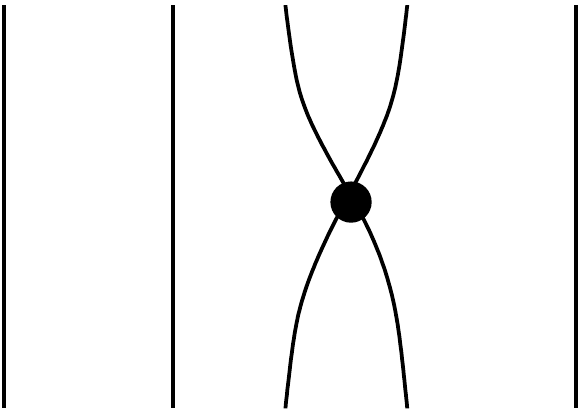}}
 \put(-77, -10){\fontsize{12}{8}$\dots$}
  \put(-20, -10){\fontsize{12}{8}$\dots$}
  \put(-60, 25){\fontsize{8}{8}$i$}
  \put(-50, 25){\fontsize{8}{8}$i+1$}
 \put(-30, 25){\fontsize{8}{8}$i+2$} \hspace{0.6cm}
 \raisebox{-13pt}{=} \hspace{0.6cm}
 \raisebox{-60pt}{\includegraphics[height=1.4in]{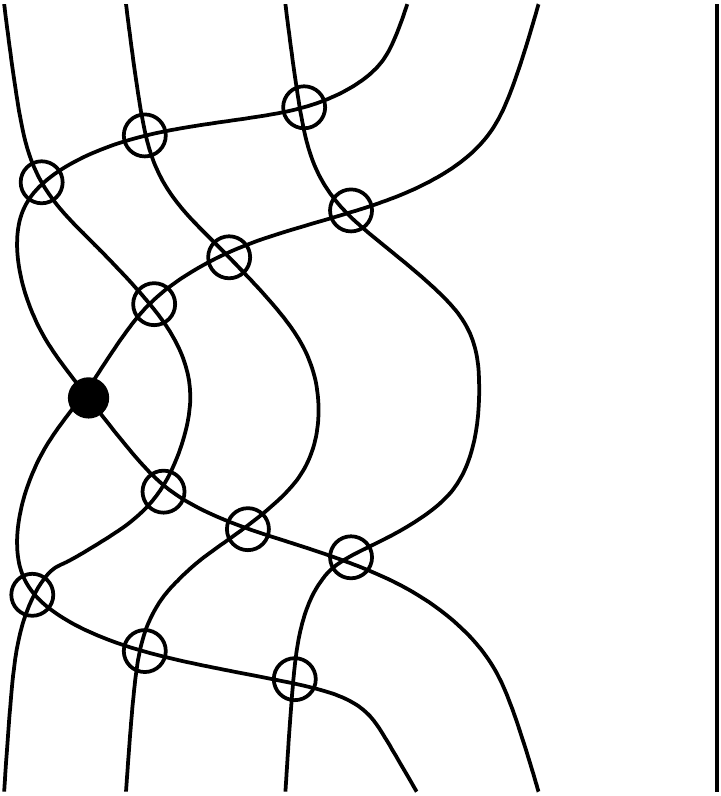}} 
  \put(-23, -10){\fontsize{12}{8}$\dots$}
    \put(-48, -10){\fontsize{12}{8}$\dots$}
     \put(-93, 43){\fontsize{8}{8}$1$}
      \put(-78, 43){\fontsize{8}{8}$2$}
       \put(-71, -55){\fontsize{12}{8}$\dots$}
          \put(-70, 33){\fontsize{12}{8}$\dots$}
    \put(-58, 43){\fontsize{8}{8}$i$}
  \put(-48, 43){\fontsize{8}{8}$i+1$}
 \put(-28, 43){\fontsize{8}{8}$i+2$}
  \put(-3, 43){\fontsize{8}{8}$n$}
 \]
 \caption{Detouring the generator $\tau_{i+1}$}\label{fig:DRtau} 
\end{figure} 

As shown in Fig.~\ref{fig:braid detour-moves}, we can detour to the front of the braid any portion of a given braid, where all of the new crossings that are created are virtual. For this reason, in the reduced presentation for $VSTM_n$, the relations involving the elements $s_i$ and $\tau_i$ will be imposed to occur between the first strands of a braid. 

\begin{figure}[h!]
\[ \raisebox{-55pt}{\includegraphics[height=1.6in]{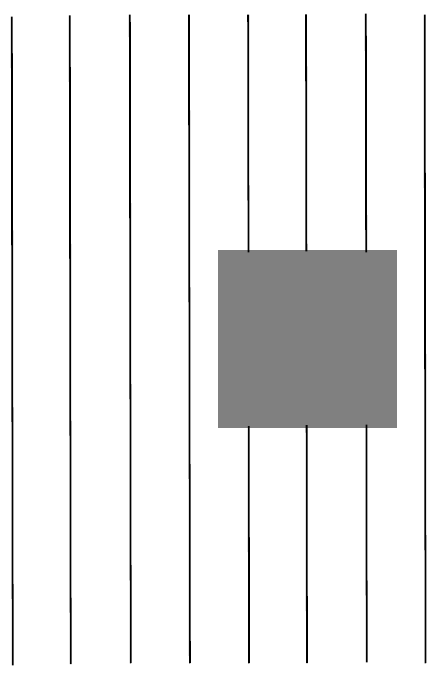}}\,\, \longleftrightarrow \,\, \raisebox{-55pt}{\includegraphics[height=1.6in]{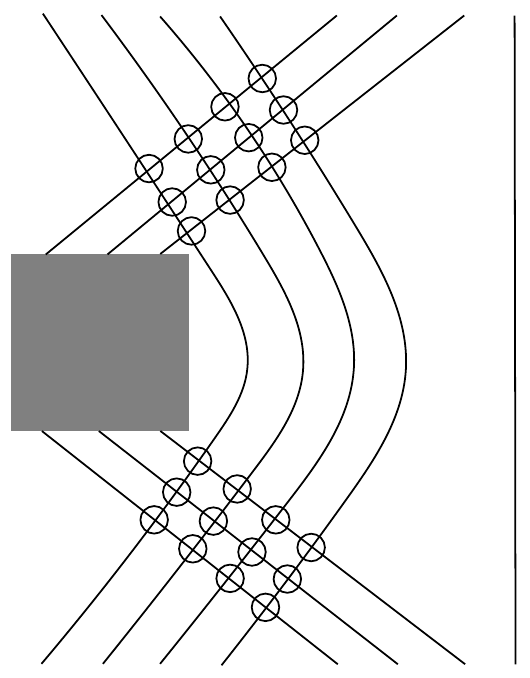}}
\]
\caption{Detouring several strands in a braid}\label{fig:braid detour-moves}
\end{figure} 

We remark that the relations $\nu_is_j \nu_i = \nu_j s_i \nu_j$ and $\nu_i\tau_j \nu_i = \nu_j \tau_i \nu_j$ for $|i-j|=1$ are not needed in the reduced presentation for $VSB_n$, since they are implicitelly used in the defining relations \eqref{A14} and \eqref{A15}.

\begin{theorem} \label{thm:reduced}
The virtual singular twin monoid $VSTM_n$ has the following reduced presentation with generators $\{s_1, \tau_1, \nu_1,\ldots, \nu_{n-1}\},$ 
and relations:
\begin{align}
\nu_i \nu_j \nu_i &=  \nu_j \nu_i \nu_j, && |i-j|=1          \label{eqs36}\\
\nu_i \nu_j      &= \nu_j \nu_i             &&  |i-j| \geq 2    \label{eqs37} \\ 
{\nu_i}^2      &= 1 \,\,  \text{and}\ \,\, s_1^2 = 1                   && 1 \leq i \leq n-1   \label{eqs38}\\
s_1\tau_1 &=  \tau_1 s_1  \,\,   \label{eqs39} \\ 
\tau_1 \nu_i  &= \nu_i \tau_1\,\,\, \text{and}\,\, \,s_1\nu_i = \nu_is_1    &&\, i \geq 3    \label{eqs40} \\
\tau_1( \nu_1 \nu_2 s_1 \nu_2  \nu_1 ) s_1  &= ( \nu_1 \nu_2 s_1 \nu_2 \nu_1 )s_1 (\nu_1 \nu_2 \tau_1 \nu_2 \nu_1 ) \label{eqs41} \\ 
s_1 (\nu_2 \nu_3 \nu_1 \nu_2 s_1 \nu_2 \nu_1 \nu_3 \nu_2) &= (\nu_2 \nu_3 \nu_1 \nu_2 s_1 \nu_2 \nu_1 \nu_3 \nu_2)  s_1 \label{eqs42} \\ 
\tau_1 (\nu_2 \nu_3 \nu_1 \nu_2 s_1 \nu_2 \nu_1 \nu_3 \nu_2) &= (\nu_2 \nu_3 \nu_1 \nu_2 s_1 \nu_2 \nu_1 \nu_3 \nu_2) \tau_1 \label{eqs43}\\
\tau_1 (\nu_2 \nu_3 \nu_1 \nu_2 \tau_1 \nu_2 \nu_1 \nu_3 \nu_2) &= (\nu_2 \nu_3 \nu_1 \nu_2 \tau_1 \nu_2 \nu_1 \nu_3 \nu_2) \tau_1 \label{eqs44} 
\end{align}
\end{theorem}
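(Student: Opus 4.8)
The plan is to prove that the monoid $M'$ presented by the generators $\{s_1,\tau_1,\nu_1,\ldots,\nu_{n-1}\}$ and relations \eqref{eqs36}--\eqref{eqs44} is isomorphic to $VSTM_n$, by constructing mutually inverse monoid homomorphisms between $M'$ and $VSTM_n$. This follows the Tietze-transformation strategy used by Kauffman--Lambropoulou for $VB_n$ and by Caprau et al.\ for $VSM_n$: one adds the generators $s_2,\ldots,s_{n-1},\tau_2,\ldots,\tau_{n-1}$ as abbreviations defined by \eqref{A14} and \eqref{A15}, and then shows that the full relation set of $VSTM_n$ collapses to the short list \eqref{eqs36}--\eqref{eqs44}. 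Since $\nu_i^2=1$, all the words in \eqref{A14}--\eqref{A15} are positive, so everything stays at the level of monoids.

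First I would verify that the formulas \eqref{A14} and \eqref{A15} hold as genuine identities in $VSTM_n$. This is an induction on $i$: the base case $i=1$ reads $s_2=\nu_1\nu_2 s_1\nu_2\nu_1$ and $\tau_2=\nu_1\nu_2\tau_1\nu_2\nu_1$, which are precisely the detour relations \eqref{eqs24} and \eqref{eqs25} rewritten using $\nu_1^2=\nu_2^2=1$; the inductive step uses the detour relations together with the commuting relations to push the detour one strand further. This shows that $VSTM_n$ is already generated by $\{s_1,\tau_1,\nu_1,\ldots,\nu_{n-1}\}$, so the map $\phi\colon M'\to VSTM_n$ sending each reduced generator to the like-named element of $VSTM_n$ is surjective, provided it is well defined.

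To see that $\phi$ is well defined, I would check that each relation \eqref{eqs36}--\eqref{eqs44} holds in $VSTM_n$. Relations \eqref{eqs36}--\eqref{eqs39} are immediate instances of the defining relations. For the rest, the key is to recognize the bracketed $\nu$-words through \eqref{A14}--\eqref{A15}: one has $\nu_1\nu_2 s_1\nu_2\nu_1=s_2$, $\nu_1\nu_2\tau_1\nu_2\nu_1=\tau_2$, and, after using $\nu_1\nu_3=\nu_3\nu_1$, the word $\nu_2\nu_3\nu_1\nu_2 s_1\nu_2\nu_1\nu_3\nu_2$ equals $s_3$, with the analogous statement for $\tau_3$. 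Under these identifications \eqref{eqs41} becomes $\tau_1 s_2 s_1=s_2 s_1\tau_2$, an instance of \eqref{eqs22}, while \eqref{eqs42}, \eqref{eqs43} and \eqref{eqs44} become the distance-$2$ commuting relations $s_1 s_3=s_3 s_1$, $\tau_1 s_3=s_3\tau_1$ and $\tau_1\tau_3=\tau_3\tau_1$, all instances of the commuting relations. Hence $\phi$ is well defined and surjective.

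The heart of the argument, and the step I expect to be the main obstacle, is constructing the inverse $\psi\colon VSTM_n\to M'$ given on generators by $\psi(s_1)=s_1$, $\psi(\tau_1)=\tau_1$, $\psi(\nu_i)=\nu_i$, and by declaring $\psi(s_{i+1})$, $\psi(\tau_{i+1})$ to be the right-hand sides of \eqref{A14}, \eqref{A15} read as words in $M'$. Well-definedness requires checking that \emph{every} defining relation of $VSTM_n$, over all admissible index pairs, holds in $M'$. The mechanism is that conjugation by a suitable product of the $\nu_i$ implements the detour move and shifts the strand indices, so a relation at positions $(i,j)$ is carried to the corresponding relation at lower positions; using the symmetric-group relations \eqref{eqs36}--\eqref{eqs38} among the $\nu_i$, each family of relations reduces to a finite set of base cases. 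Concretely, $s_i^2=1$ and $\tau_i s_i=s_i\tau_i$ reduce to \eqref{eqs38}--\eqref{eqs39}; the relation $s_i s_j\tau_i=\tau_j s_i s_j$ with $|i-j|=1$ reduces to \eqref{eqs41}; the detour relations \eqref{eqs23}--\eqref{eqs25} among the images hold by the very definition of $\psi(s_{i+1})$, $\psi(\tau_{i+1})$ together with the $\nu$-relations; and the commuting relations at distance $\ge 2$ reduce, after conjugation, to the distance-$2$ cases \eqref{eqs40}, \eqref{eqs42}, \eqref{eqs43}, \eqref{eqs44}. The delicate bookkeeping lies exactly in reducing the arbitrary-distance commuting relations to these distance-$2$ cases and in confirming that conjugating each base relation reproduces the relation at every index pair; this is where the bulk of the effort goes. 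Once $\psi$ is shown to be well defined, $\phi$ and $\psi$ agree with mutually inverse assignments on generators, which establishes the isomorphism and completes the proof.
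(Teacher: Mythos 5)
Your proposal is correct and follows essentially the same strategy as the paper: treat \eqref{A14}--\eqref{A15} as definitions of $s_{i+1},\tau_{i+1}$, observe that the reduced relations are the "base cases" of the original ones (e.g.\ \eqref{eqs41} is $\tau_1 s_2 s_1 = s_2 s_1 \tau_2$ and \eqref{eqs42}--\eqref{eqs44} are the distance-$2$ commutations with $s_3,\tau_3$), and then derive every original relation of $VSTM_n$ from the reduced set by $\nu$-conjugation/detour arguments that shift indices down to the base cases. The paper's proof consists precisely of the lemma-by-lemma computations you defer (a preparatory $\nu$-word identity, the detour relations, $s_i^2=1$ and $\tau_i s_i = s_i\tau_i$, the distance-$\geq 2$ commutations, and $s_js_i\tau_j=\tau_is_js_i$), so the two arguments coincide in structure.
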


Note that in the reduced presentation for $VSTM_n$ we kept all of the original relations involving only virtual generators. In addition, we have kept the relations involving $s_i$ or $\tau_i$ that can be represented on the left side of the braid. For convenience, we will call these the \textit{base cases} of the original relations. For example, the base case for the commuting relations $s_i s_j = s_j s_i$ is the relation $s_1s_3 = s_3s_1$, which by the defining relation~\eqref{A14} is equivalent to the relation~\eqref{eqs42}. Similarly, from the commuting relations $\tau_i s_j = s_j \tau_i$ and $\tau_i \tau_j = \tau_j \tau_i$ (with $|i -j| \geq 2$) we kept only the relations $\tau_1 s_3 = s_3 \tau_1$ and, respectively, $\tau_1 \tau_3 = \tau _3 \tau_1$, which are represented by the relations~\eqref{eqs43} and~\eqref{eqs44}, respectively. We will show that all of the other commuting relations follow from their corresponding base case relations and the virtual relations. In addition, we remark that relation~\eqref{eqs41} is equivalent to the relation $\tau_1 s_2 s_1 = s_2 s_1 \tau_2$, which is the base case for relations \eqref{eqs22}. Finally, observe that relation \eqref{eqs39} is the base case of relations \eqref{eqs21} in our original presentation for the monoid $VSTM_n$.

In the following lemmas, we show that each relation in the original presentation for $VSTM_n$ is satisfied, therefore proving Theorem~\ref{thm:reduced}. In each proof below, we underline the portion of the word that is modified in the subsequent next and specify the relations used. The first lemma deals with preparatory identities. 

 \begin{lemma} The following relations hold in $VSTM_{n}$, for all $i-j \geq 2$:
\begin{eqnarray} \nu_i \nu_{i-1}\ldots { \nu_{j+1}{\nu_j} \nu_{j+1}}\ldots \nu_{i-1} \nu_i =  \nu_j \nu_{j+1}\ldots \nu_{i-1}{\nu_i} \nu_{i-1}\ldots   \nu_{j+1} \nu_j. \label{eqs45} 
 \end{eqnarray}  
 \end{lemma}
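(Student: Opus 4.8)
The plan is to prove the identity~\eqref{eqs45} by induction on the difference $d = i - j \geq 2$, using only the virtual relations~\eqref{eqs36}, \eqref{eqs37}, and~\eqref{eqs38}, which live entirely within the symmetric-group copy $S_n \subset VSTM_n$ generated by the $\nu_k$. In fact the whole statement is really an identity in $S_n$, and the element in question is the transposition $(j, i+1)$ expressed as a conjugate of $\nu_j = (j, j+1)$ by the ``shift'' word $\nu_{j+1}\nu_{j+2}\cdots\nu_i = (j+1, i+1)$. I would keep this permutation-theoretic picture in mind as a sanity check, but carry out the argument purely syntactically so that it is valid in the monoid presentation.

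First I would establish the base case $d = 2$, i.e. $i = j+2$, where~\eqref{eqs45} reads $\nu_{j+2}\nu_{j+1}\nu_j\nu_{j+1}\nu_{j+2} = \nu_j\nu_{j+1}\nu_{j+2}\nu_{j+1}\nu_j$. This follows from a short manipulation using the braid relation~\eqref{eqs36} and the far-commutation~\eqref{eqs37}: underlining the modified subword at each step, one rewrites $\underline{\nu_{j+1}\nu_j\nu_{j+1}} = \nu_j\nu_{j+1}\nu_j$ by~\eqref{eqs36} and then moves the resulting $\nu_{j+2}$'s across the non-adjacent generators via~\eqref{eqs37}, arriving at the right-hand side. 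For the inductive step, I would assume~\eqref{eqs45} for difference $d-1$ and consider the word $w = \nu_i\nu_{i-1}\cdots\nu_{j+1}\nu_j\nu_{j+1}\cdots\nu_{i-1}\nu_i$. The idea is to peel off the outermost generator $\nu_i$ on each side and apply the induction hypothesis to the inner block $\nu_{i-1}\cdots\nu_{j+1}\nu_j\nu_{j+1}\cdots\nu_{i-1}$, which has difference $(i-1) - j = d-1$; this rewrites $w$ as $\nu_i\bigl(\nu_j\nu_{j+1}\cdots\nu_{i-1}\nu_{i-1}\cdots\nu_{j+1}\nu_j\bigr)\nu_i$, and then I would use~\eqref{eqs37} to commute the single leftmost $\nu_i$ leftward past $\nu_j,\nu_{j+1},\dots,\nu_{i-2}$ (all of index difference $\geq 2$ from $i$) until it meets an adjacent $\nu_{i-1}$, at which point~\eqref{eqs36} applies. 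The symmetric manipulation on the right-hand tail then assembles the target word.

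The main obstacle I anticipate is bookkeeping: tracking exactly which consecutive generators in the long word are adjacent (so~\eqref{eqs36} applies) versus far apart (so~\eqref{eqs37} applies), and making sure the two braid-type moves land the $\nu_i$ and $\nu_{i-1}$ factors in the correct positions so that the inductive reassembly is exact rather than merely ``morally'' correct. A clean way to manage this is to prove an auxiliary sliding lemma first, namely that $\nu_i\,(\nu_{i-1}\nu_{i-2}\cdots\nu_{j}) = (\nu_{i-1}\nu_{i-2}\cdots\nu_{j})\,\nu_{i+1}$-type relabelings hold; more directly, I expect the cleanest route is to reduce the inductive step to the single ``ascending-descending'' move $\nu_i\nu_{i-1}\,X\,\nu_{i-1}\nu_i = \nu_{i-1}\nu_i\,X'\,\nu_i\nu_{i-1}$ where $X$ commutes with $\nu_i$. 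Since every step stays inside the virtual subgroup and uses only relations already assumed in the reduced presentation, no genuinely new ingredient is required beyond careful induction; the payoff is that~\eqref{eqs45} then licenses the detour rewriting of the conjugating words in the defining relations~\eqref{A14} and~\eqref{A15}, which is what the subsequent lemmas will need.
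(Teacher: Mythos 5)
Your proposal is correct and takes essentially the same route as the paper: both are inductions on $i-j$ using only relations \eqref{eqs36}--\eqref{eqs38}, the sole difference being that the paper works inside-out (braid move at the innermost peak $\nu_{j+1}\nu_j\nu_{j+1}$, push the two copies of $\nu_j$ outward via \eqref{eqs37}, then apply the inductive hypothesis to the word on the indices $j+1,\dots,i$), whereas you work outside-in (inductive hypothesis on the indices $j,\dots,i-1$ first, then push the two copies of $\nu_i$ inward and finish with the single braid move $\nu_i\nu_{i-1}\nu_i=\nu_{i-1}\nu_i\nu_{i-1}$ at the peak) --- mirror-image derivations of equal strength. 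One caution: the auxiliary ``sliding'' identity floated in your final paragraph, $\nu_i(\nu_{i-1}\cdots\nu_j)=(\nu_{i-1}\cdots\nu_j)\nu_{i+1}$, is false as stated, since $\nu_{i+1}$ commutes with every letter of $\nu_{i-1}\cdots\nu_j$ by \eqref{eqs37}, so it would force $\nu_i=\nu_{i+1}$, which already fails under the permutation map $\nu_k\mapsto(k,k+1)$; however, your main inductive step never relies on it, so the proof stands.
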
 

\begin{proof}
We prove the statement by induction on $i-j$, and rely heavily on relations~\eqref{eqs36} and~\eqref{eqs37}. If $i-j = 2$, the statement is equivalent to relation \eqref{eqs36}. Suppose that relation \eqref{eqs45} holds for $i-j = k$, where $k \geq 2$. Then, observe the following:
\begin{align*}
 \nu_i \nu_{i-1}\ldots \underline{ \nu_{j+1}{\nu_j} \nu_{j+1}}\ldots \nu_{i-1} \nu_i &\stackrel{\eqref{eqs36}}{=} \nu_i  \nu_{i-1}\ldots \nu_{j+2}\underline{\nu_j}\nu_{j+1} \underline{\nu_j} \nu_{j+2} \ \ldots \nu_{i-1}  \nu_i \\
&\stackrel{\eqref{eqs37}}{=} \nu_j ( \underline{\nu_i  \nu_{i-1}\ldots \nu_{j+2}\nu_{j+1} \nu_{j+2}  \ldots \nu_{i-1}  \nu_i}  )\nu_j\\
&\ =  \nu_j \nu_{j+1}\ldots \nu_{i-1}{\nu_i} \nu_{i-1}\ldots   \nu_{j+1} \nu_j. 
\end{align*}
In the last step above, we used the inductive hypothesis.
\end{proof}

\begin{lemma} \label{lemma:detour_rel}
For all $|i-j|=1$, the  relations  $\nu_is_j \nu_i=\nu_js_i \nu_j$  and  $\nu_i\tau_j \nu_i = \nu_j \tau_i \nu_j$  follow from the reduced presentation for $VSTM_{n}$.
\end{lemma}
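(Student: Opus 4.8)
The plan is to read both detour relations directly off the defining relations \eqref{A14} and \eqref{A15}. Since the condition $|i-j|=1$ is symmetric, it suffices to treat the case $j=i+1$; the case $j=i-1$ is recovered by relabelling $i\mapsto i-1$. So the goal is to establish
\[
\nu_i s_{i+1}\nu_i=\nu_{i+1}s_i\nu_{i+1}\qquad\text{and}\qquad \nu_i\tau_{i+1}\nu_i=\nu_{i+1}\tau_i\nu_{i+1}
\]
for every admissible $i$.

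The first step is to rewrite \eqref{A14} and \eqref{A15} in conjugation form. Put $W_k:=(\nu_{k-1}\cdots\nu_1)(\nu_k\cdots\nu_2)$, with the convention $W_1=1$; then \eqref{A14} and \eqref{A15} say exactly that $s_k=W_k\,s_1\,W_k^{-1}$ and $\tau_k=W_k\,\tau_1\,W_k^{-1}$, where $W_k^{-1}=(\nu_2\cdots\nu_k)(\nu_1\cdots\nu_{k-1})$ is the reversed word (each $\nu_j$ being an involution by \eqref{eqs38}). In this language both target relations reduce to a single purely virtual identity,
\[
\nu_i\,W_{i+1}=\nu_{i+1}\,W_i .
\]
Indeed, using $\nu_j^2=1$ to fold the outer generator into the conjugator yields $\nu_i s_{i+1}\nu_i=(\nu_iW_{i+1})\,s_1\,(\nu_iW_{i+1})^{-1}$ and $\nu_{i+1}s_i\nu_{i+1}=(\nu_{i+1}W_i)\,s_1\,(\nu_{i+1}W_i)^{-1}$, and identically with $\tau_1$ in place of $s_1$; once the two conjugators coincide, so do the conjugates.

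The heart of the argument is thus the identity $\nu_iW_{i+1}=\nu_{i+1}W_i$, which I would verify by a short computation inside the virtual part of the monoid. On the left, $\nu_iW_{i+1}=\nu_i(\nu_i\cdots\nu_1)(\nu_{i+1}\cdots\nu_2)$ collapses by $\nu_i^2=1$ to $(\nu_{i-1}\cdots\nu_1)(\nu_{i+1}\nu_i\cdots\nu_2)$. On the right, in $\nu_{i+1}W_i=\nu_{i+1}(\nu_{i-1}\cdots\nu_1)(\nu_i\cdots\nu_2)$ the generator $\nu_{i+1}$ commutes past each of $\nu_1,\dots,\nu_{i-1}$ by the far-commutation relation \eqref{eqs37}, and sliding it to the right produces the same word $(\nu_{i-1}\cdots\nu_1)(\nu_{i+1}\nu_i\cdots\nu_2)$. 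Hence the two sides agree. It is worth noting that only \eqref{eqs37} and \eqref{eqs38} enter here, not the braid relation \eqref{eqs36}.

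The genuine content of the proof is the conceptual step of recasting \eqref{A14} and \eqref{A15} as conjugations and recognizing that everything collapses to $\nu_iW_{i+1}=\nu_{i+1}W_i$; once that is in place, the computation is a direct substitution. The only remaining delicate point is bookkeeping at the boundary index $i=1$, where $W_1=1$ and several descending products degenerate to empty words. There the identity reads $\nu_1W_2=\nu_1\nu_1\nu_2=\nu_2=\nu_2W_1$, which I would record separately as the base case, after which no further obstacle arises.
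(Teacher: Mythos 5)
Your proposal is correct and follows essentially the same route as the paper: both expand $s_{i+1}$ and $\tau_{i+1}$ via \eqref{A14}--\eqref{A15}, cancel the outer $\nu_i$ against the innermost letters of the conjugating word using \eqref{eqs38}, and slide $\nu_{i+1}$ past $\nu_1,\dots,\nu_{i-1}$ using \eqref{eqs37}, with the base case $i=1$ treated separately. Your only (harmless) refinement is to package the computation as the single conjugator identity $\nu_i W_{i+1}=\nu_{i+1}W_i$, which disposes of the $s$- and $\tau$-relations simultaneously, where the paper carries out the word manipulation inline for $\tau$ and notes that the $s$ case is analogous.
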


\begin{proof}
It is clear that these relations hold, since they were used in the defining relations~\eqref{A14} and~\eqref{A15}. Nevertheless, we provide a proof for the second set of relations in the case
 $j = i+1$ and $i \geq 1$; the first set of relations follows by a similar argument. For $i = 1$, observe that the relation $\nu_1 \tau_2 \nu_1 = \nu_2 \tau_1 \nu_2$ follows from the defining relation $\tau_2 = \nu_1 \nu_2 \tau_1 \nu_2 \nu_1$ by multiplying both sides of it by $\nu_1$ and using that $\nu_1^2 = 1$. Now let $i \geq 2$, and note the following:
\begin{align*}
\nu_i \tau_{i+1} \nu_i \stackrel{\eqref{A15}}{=}& \underline{\nu_i  (\nu_i} \ldots \nu_2 \nu_1)(\nu_{i+1}\ldots \nu_3 \nu_2)\tau_1 (\nu_2 \nu_3 \ldots \nu_{i+1})(\nu_1\nu_2\ldots \underline{\nu_i) \nu_i}\\
\stackrel{\eqref{eqs38}}{=}&  (\nu_{i-1}\ldots \nu_2 \nu_1)(\underline{\nu_{i+1}}\ldots \nu_3 \nu_2) \tau_1 (\nu_2 \nu_3\ldots \underline{\nu_{i+1}})(\nu_1\nu_2\ldots \nu_{i-1}) \\
\stackrel{\eqref{eqs37}}{=}& \nu_{i+1}  (\underline{\nu_{i-1}\ldots \nu_2 \nu_1)(\nu_i\ldots \nu_3 \nu_2)\tau_1 (\nu_2 \nu_3 \ldots \nu_i)(\nu_1 \nu_2 \ldots \nu_{i-1}}) \nu_{i+1}\\
\stackrel{\eqref{A15}}{=}& \nu_{i+1} \tau_i \nu_{i+1}.
\end{align*}
This completes the proof.
\end{proof}

We observe that for $|i-j|=1$, the relations $\nu_is_j \nu_i=\nu_js_i \nu_j$ and $\nu_i\tau_j \nu_i = \nu_j \tau_i \nu_j$ are equivalent to the following relations:
\begin{equation} \label{eqs46}
s_i \nu_j \nu_i = \nu_j \nu_i s_j \text{ and respectively } \tau_i \nu_j \nu_i = \nu_j \nu_i \tau_j.  
\end{equation}

\begin{lemma}
For all  $1 \leq i \leq n-1$, the relations $\tau_i  s_i = s_i \tau_i$ and $s_i^2 = 1$ follow from the reduced presentation for $VSTM_{n}$.
\end{lemma}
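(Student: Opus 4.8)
The plan is to exploit the single structural feature underlying the defining relations~\eqref{A14} and~\eqref{A15}: each $s_i$ and each $\tau_i$ is the conjugate of $s_1$ and $\tau_1$ by \emph{one and the same} word in the virtual generators. Once this is made precise, the two base-case relations $s_1^2=1$ and $s_1\tau_1=\tau_1 s_1$ transport directly to $s_i$ and $\tau_i$. First I would fix notation: for $2\le i\le n-1$ set
\[
w_i := (\nu_{i-1}\cdots\nu_2\nu_1)(\nu_i\cdots\nu_3\nu_2),
\qquad
w_i' := (\nu_2\nu_3\cdots\nu_i)(\nu_1\nu_2\cdots\nu_{i-1}),
\]
so that $w_i'$ is literally the reversal of $w_i$, and the defining relations read $s_i=w_i\,s_1\,w_i'$ and $\tau_i=w_i\,\tau_1\,w_i'$, with identical conjugating words in the two families (for $i=1$ take $w_1=w_1'$ empty).

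The key preliminary step is to record that $w_i' w_i = w_i w_i' = 1$ in $VSTM_n$. Since every letter occurring is a virtual involution ($\nu_j^2=1$ by~\eqref{eqs38}), the product $w_i' w_i$ collapses by successive innermost cancellations: in $(\nu_1\nu_2\cdots\nu_{i-1})(\nu_{i-1}\cdots\nu_2\nu_1)$ one cancels $\nu_{i-1}\nu_{i-1}$, then $\nu_{i-2}\nu_{i-2}$, down to $\nu_1\nu_1$, and likewise for the outer factor $(\nu_2\cdots\nu_i)(\nu_i\cdots\nu_2)$. The same reasoning gives $w_i w_i'=1$. Thus $w_i'$ is a two-sided inverse of $w_i$, using nothing beyond relation~\eqref{eqs38}.

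With this in hand the lemma is a one-line computation. The base case $i=1$ is immediate, since $s_1^2=1$ is part of~\eqref{eqs38} and $s_1\tau_1=\tau_1 s_1$ is~\eqref{eqs39}. For $i\ge 2$,
\[
s_i^2 = w_i\,s_1\,(w_i' w_i)\,s_1\,w_i' = w_i\,s_1^2\,w_i' = w_i w_i' = 1,
\]
and
\[
\tau_i s_i = w_i\,\tau_1\,(w_i' w_i)\,s_1\,w_i' = w_i(\tau_1 s_1)w_i' = w_i(s_1\tau_1)w_i' = w_i\,s_1\,(w_i' w_i)\,\tau_1\,w_i' = s_i\tau_i,
\]
invoking $s_1^2=1$ and~\eqref{eqs39} respectively.

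I do not expect a genuine obstacle here; the only point requiring care is that the whole argument must stay inside the \emph{monoid}, where $\tau_1$ need not be invertible. This causes no difficulty, because neither computation ever inverts $\tau_1$: both use only the involutivity of the $\nu_j$ (to obtain $w_i' w_i = 1$) and the two base-case relations. This conjugation template is exactly the one I would reuse for the commuting relations in the remaining lemmas, where each $s_i,\tau_i$ is again realized as the $w_i$-conjugate of a base-case generator.
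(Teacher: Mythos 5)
Your proposal is correct and follows essentially the same route as the paper: both arguments expand $s_i$ and $\tau_i$ via the defining relations~\eqref{A14} and~\eqref{A15}, collapse the middle virtual words to the identity using $\nu_j^2=1$ (your $w_i'w_i=1$ is exactly the cancellation the paper performs inline), and then invoke the base cases $s_1^2=1$ and $s_1\tau_1=\tau_1 s_1$. Your packaging of the cancellation as a statement that $w_i'$ is a two-sided inverse of $w_i$ is only a notational tidying of the paper's computation, not a different method.
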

 \begin{proof} Let $i >1$. We first use the defining relations~\eqref{A14} and~\eqref{A15} for $s_i$ and $\tau_i$ in terms of $s_1$ and $\tau_1$, respectively, followed by the virtual relations $\nu_i^2 = 1$, to obtain:
\begin{align*}
\tau_i s_i =& [ (\nu_{i-1}\ldots \nu_2 \nu_1)(\nu_{i}\ldots \nu_3 \nu_2)\tau_1  (\nu_2 \nu_3 \ldots \nu_{i}) \underline{(\nu_1\nu_2 \ldots \nu_{i-1}) ]\,  [ (\nu_{i-1}\ldots \nu_2\nu_1) }\cdot \\ 
& (\nu_{i}\ldots \nu_3 \nu_2)s_1 (\nu_2 \nu_3 \ldots \nu_{i})(\nu_1 \nu_2\ldots \nu_{i-1}) ]\\
\stackrel{\eqref{eqs38}}{=}& (\nu_{i-1}\ldots \nu_2 \nu_1)(\nu_{i}\ldots \nu_3 \nu_2)\tau_1  \underline{(\nu_2 \nu_3\ldots \nu_i)  (\nu_i \ldots \nu_3 \nu_2) } s_1   (\nu_2 \nu_3 \ldots \nu_i)(\nu_1\nu_2\ldots \nu_{i-1}) \\
\stackrel{\eqref{eqs38}}{=}&  (\nu_{i-1}\ldots \nu_2 \nu_1)(\nu_i \ldots \nu_3 \nu_2)\tau_1  s_1  (\nu_2 \nu_3 \ldots \nu_i)(\nu_1 \nu_2 \ldots \nu_{i-1}). 
\end{align*}
Using similar steps, we obtain the following
\begin{align*}
 s_i \tau_i=(\nu_{i-1}\ldots \nu_2 \nu_1)(\nu_i \ldots \nu_3 \nu_2) s_1  \tau_1(\nu_2 \nu_3 \ldots \nu_i)(\nu_1 \nu_2 \ldots \nu_{i-1}). 
\end{align*}
But since $\tau_1 s_1 = s_1 \tau_1$, we obtain that $\tau_i s_i =  s_i \tau_i$. Finally, it is clear that the relations $s_i^2 = 1$ hold for all $i$, due to relations \eqref{A14} and \eqref{eqs38}.
\end{proof}

\begin{lemma}\label{sigv-tauv}
For all $|i-j| \geq 2$, the following commuting relations follow from the reduced presentation for $VSTM_{n}$:
\begin{enumerate}
\item[(i)]  $s_i \nu_j  =  \nu_j s_i$ and $\tau_i \nu_j  =  \nu_j \tau_i$,
\item[(ii)] $s_i s_j = s_j s_i$,  $\tau_i  \tau_j = \tau_j \tau_i$ and  $s_i  \tau_j = \tau_j s_i$.
  \end{enumerate}
\end{lemma}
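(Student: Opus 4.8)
The plan is to prove part (i) first and then feed it into part (ii), reducing every commuting relation to one of the base cases \eqref{eqs40}, \eqref{eqs42}, \eqref{eqs43}, \eqref{eqs44} retained in the reduced presentation. Throughout I would rewrite the defining relations \eqref{A14} and \eqref{A15} in the conjugated form $s_{i+1}=\omega_i s_1\omega_i^{-1}$ and $\tau_{i+1}=\omega_i\tau_1\omega_i^{-1}$, where $\omega_i:=(\nu_i\cdots\nu_1)(\nu_{i+1}\cdots\nu_2)$ is a word in the virtual generators with $\omega_i^{-1}=(\nu_2\cdots\nu_{i+1})(\nu_1\cdots\nu_i)$. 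The two tools used repeatedly are the base-case commutations $s_1\nu_k=\nu_k s_1$ and $\tau_1\nu_k=\nu_k\tau_1$ for $k\ge 3$ from \eqref{eqs40}, and the detour identities \eqref{eqs46} of Lemma~\ref{lemma:detour_rel}, which I would rearrange into the index shifts $s_i=\nu_{i-1}\nu_i\,s_{i-1}\,\nu_i\nu_{i-1}$ and $\tau_i=\nu_{i-1}\nu_i\,\tau_{i-1}\,\nu_i\nu_{i-1}$.

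For part (i), I would first record the purely virtual identity $\nu_j\,\omega_i=\omega_i\,\nu_{j+2}$ for all $1\le j\le i-1$. This follows by a short argument using only \eqref{eqs36}--\eqref{eqs38}, entirely analogous to the preparatory identity \eqref{eqs45}: pushing $\nu_j$ rightward through each descending block, the far-commutations \eqref{eqs37} move it past the large-index letters while a single braid move $\nu_j\nu_{j+1}\nu_j=\nu_{j+1}\nu_j\nu_{j+1}$ raises its index by one per block. Granting this, fix $m,j$ with $|j-m|\ge 2$. If $j\ge m+2$, then $\nu_j$ far-commutes with each of $\nu_1,\dots,\nu_m$ and, as $j\ge 3$, with $s_1$ as well, so it passes through the whole word $s_m=\omega_{m-1}s_1\omega_{m-1}^{-1}$ and $s_m\nu_j=\nu_j s_m$. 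If $j\le m-2$, then $\nu_j\,\omega_{m-1}=\omega_{m-1}\,\nu_{j+2}$ with $3\le j+2\le m$, and since $s_1\nu_{j+2}=\nu_{j+2}s_1$ by \eqref{eqs40}, one computes $\nu_j s_m=\omega_{m-1}\nu_{j+2}s_1\omega_{m-1}^{-1}=\omega_{m-1}s_1\nu_{j+2}\omega_{m-1}^{-1}=s_m\nu_j$. The identical computation with $\tau_1$ in place of $s_1$ gives $\tau_i\nu_j=\nu_j\tau_i$.

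For part (ii) I would assume (i) and argue by a two-stage reduction, always collapsing the factor carrying the \emph{smaller} index first. Suppose it is $g_i$ with $i<j$ (here $g,h\in\{s,\tau\}$); substituting $g_i=\nu_{i-1}\nu_i\,g_{i-1}\,\nu_i\nu_{i-1}$, the other factor $h_j$ commutes with $\nu_{i-1}$ and $\nu_i$ by part (i) (their indices lie at distance $\ge 2$ from $j$), while $g_{i-1}h_j=h_jg_{i-1}$ holds by induction on the smaller index; sliding $h_j$ across recovers $h_jg_i$. This reduces the claim to $g_1h_j=h_jg_1$ with $j\ge 3$. In the second stage I reduce the larger index via $h_j=\nu_{j-1}\nu_j\,h_{j-1}\,\nu_j\nu_{j-1}$; now $g_1$ commutes with $\nu_{j-1},\nu_j$ by \eqref{eqs40} (indices $\ge 3$), and induction on $j$ brings the claim down to $j=3$. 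Hence every relation in (ii) reduces to one of $s_1s_3=s_3s_1$, $\tau_1\tau_3=\tau_3\tau_1$, $\tau_1s_3=s_3\tau_1$, or $s_1\tau_3=\tau_3 s_1$, the first three being exactly the retained base cases \eqref{eqs42}, \eqref{eqs44}, \eqref{eqs43}.

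The one genuinely new point, and the step I expect to be the main obstacle, is the remaining mixed base case $s_1\tau_3=\tau_3 s_1$, which is \emph{not} among the retained relations; only its transpose $\tau_1 s_3=s_3\tau_1$ is given. I would resolve it by exploiting a symmetry of the conjugator. Writing $P:=\nu_2\nu_3\nu_1\nu_2$, the far-commutation $\nu_1\nu_3=\nu_3\nu_1$ from \eqref{eqs37} gives $P=\nu_2\nu_1\nu_3\nu_2=P^{-1}$, so $P$ is a virtual involution. By \eqref{A14} and \eqref{A15}, $s_3=Ps_1P^{-1}$ and $\tau_3=P\tau_1P^{-1}=P^{-1}\tau_1P$. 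Conjugating the given relation \eqref{eqs43}, namely $\tau_1(Ps_1P^{-1})=(Ps_1P^{-1})\tau_1$, on the left by $P^{-1}$ and on the right by $P$ yields $(P^{-1}\tau_1P)s_1=s_1(P^{-1}\tau_1P)$, that is $\tau_3 s_1=s_1\tau_3$. This closes the last base case and, through the two reductions above, completes the proof of (ii). The key realization is that the single commutation $\nu_1\nu_3=\nu_3\nu_1$ makes $P$ self-inverse, which is precisely what identifies the two conjugates $P\tau_1P^{-1}$ and $P^{-1}\tau_1P$ and thereby lets the given relation produce its missing transpose.
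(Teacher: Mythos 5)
Your proposal is correct and takes essentially the same route as the paper: your part (i) is the paper's letter-pushing computation packaged as the conjugation identity $\nu_j\,\omega_i=\omega_i\,\nu_{j+2}$, and your part (ii) reduces everything to the retained base cases \eqref{eqs42}--\eqref{eqs44}, recovering the missing mixed case $s_1\tau_3=\tau_3 s_1$ by conjugating \eqref{eqs43} with the self-inverse virtual word $\nu_2\nu_3\nu_1\nu_2$ --- which is exactly the paper's own key step (it multiplies \eqref{eqs43} by $\nu_2\nu_1\nu_3\nu_2$ on the left and $\nu_2\nu_3\nu_1\nu_2$ on the right and invokes \eqref{A15}). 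The only difference is bookkeeping: you shrink the two indices stepwise by a double induction using the shifts $g_i=\nu_{i-1}\nu_i\,g_{i-1}\,\nu_i\nu_{i-1}$ from Lemma~\ref{lemma:detour_rel}, whereas the paper expands the smaller-index generator in full via \eqref{A14}/\eqref{A15} and commutes the other factor past it letter by letter.
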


\begin{proof}
(i) By the defining relation~\eqref{A14} for $s_i$, we have:
  \[s_i \nu_j =(\nu_{i-1}\ldots
\nu_2 \nu_1)\,(\nu_i \ldots \nu_3 \nu_2)\,s_1\, (\nu_2 \nu_3 \ldots \nu_i)\,(\nu_1 \nu_2\ldots \nu_{i-1})\, \nu_j.\]  
Since $|i-j| \geq 2$, either $j\geq i+2$ or $j\leq i-2$. If $j\geq i+2$, then $\nu_j$  commutes  with all generators in the above
expression, thus $s_i \nu_j  =  \nu_j s_i$ in this case. 
 If $j\leq i-2$ we have:
\begin{align*}
s_i \nu_j  \stackrel{\eqref{A14}}{=}&  (\nu_{i-1}\ldots \nu_2\nu_1)(\nu_i\ldots \nu_3\nu_2)s_1 (\nu_2\nu_3\ldots \nu_i)(\nu_1\nu_2\ldots \nu_{i-1})\underline{\nu_j}  \\
\stackrel{(\ref{eqs37})}{=}&  (\nu_{i-1}\ldots \nu_1)(\nu_i\ldots \nu_2)s_1 (\nu_2\nu_3\ldots \nu_i)(\nu_1\nu_2 \ldots \nu_{j-1}\underline{\nu_j \nu_{j+1} \nu_j}  \nu_{j+2} \ldots \nu_{i-1})  \\ 
\stackrel{(\ref{eqs36})}{=}&  (\nu_{i-1}\ldots \nu_1)(\nu_i\ldots \nu_2)s_1 (\nu_2 \nu_3\ldots  \nu_i)(\nu_1\nu_2 \ldots \nu_{j-1} \underline{\nu_{j+1}} \nu_j  \nu_{j+1} \nu_{j+2} \ldots \nu_{i-1}) \\
\stackrel{(\ref{eqs37})}{=}&  (\nu_{i-1}\ldots \nu_1)(\nu_i\ldots \nu_3\nu_2)s_1 (\nu_2\nu_3\ldots  \nu_j \underline{ \nu_{j+1} \nu_{j+2} \nu_{j+1}}  \nu_{j+3} \ldots \nu_i) (\nu_1\nu_2 \ldots \nu_{i-1}) \\
\stackrel{(\ref{eqs36})}{=}&  (\nu_{i-1}\ldots \nu_1)(\nu_i\ldots \nu_3 \nu_2)s_1(\nu_2 \nu_3 \ldots  \nu_j\underline{ \nu_{j+2}} \nu_{j+1} \nu_{j+2} \nu_{j+3} \ldots \nu_i)  (\nu_1 \nu_2 \ldots \nu_{i-1}) \\
 \displaystyle  \mathop{=}_{\eqref{eqs40}}^{\eqref{eqs37}}&  (\nu_{i-1}\ldots \nu_1)(\nu_i\ldots \nu_{j+3} \underline{ \nu_{j+2}  \nu_{j+1} \nu_{j+2}} \nu_j\ldots \nu_2) s_1 (\nu_2\ldots  \nu_i) (\nu_1 \ldots \nu_{i-1}) \\
\stackrel{(\ref{eqs36})}{=}&  (\nu_{i-1}\ldots \nu_2 \nu_1)(\nu_i \ldots  \nu_{j+3} \underline{ \nu_{j+1}} \nu_{j+2} \nu_{j+1} \nu_j\ldots \nu_2)s_1 (\nu_2 \ldots  \nu_i)  (\nu_1 \ldots \nu_{i-1}) \\
\stackrel{(\ref{eqs37})}{=}&  (\nu_{i-1}\ldots \nu_{j+2} \underline{\nu_{j+1} \nu_j \nu_{j+1} } \nu_{j-1} \ldots \nu_2 \nu_1)  (\nu_i\ldots \nu_3 \nu_2)s_1 (\nu_2 \ldots  \nu_i) (\nu_1 \ldots \nu_{i-1}) \\
\stackrel{(\ref{eqs36})}{=}& (\nu_{i-1}\ldots \nu_{j+2} \underline{\nu_j }  \nu_{j+1}  \nu_j  \nu_{j-1} \ldots \nu_2 \nu_1)  (\nu_i\ldots \nu_3\nu_2) s_1(\nu_2\ldots  \nu_i) (\nu_1 \ldots \nu_{i-1}) \\
\stackrel{(\ref{eqs37})}{=}&  \nu_j (\nu_{i-1}\ldots  \nu_1)  (\nu_i\ldots \nu_2) s_1 (\nu_2\ldots  \nu_i) (\nu_1 \ldots \nu_{i-1}) \\
 \displaystyle  \stackrel{\eqref{A14}}{=} &  \nu_j s_i.  
\end{align*} 
Therefore, $s_i \nu_j = \nu_js_i$ for all $|i-j| \geq 2$. The relation $\tau_i \nu_j = \nu_j \tau_i$, where $|i-j| \geq 2$,  is verified in a similar way, by making use of the defining relation \eqref{A15} together with the relations \eqref{eqs36}, \eqref{eqs37} and \eqref{eqs40}.

(ii) We show first that $s_1 \tau_i = \tau_i s_1$ and $s_i \tau_1 = \tau_1 s_i$ for all $i\geq 3$. Note that we already have that $\tau_1 s_3 = s_3 \tau_1$ by relation~\eqref{eqs43}. That is, we have that:
\[ \tau_1 (\nu_2 \nu_3 \nu_1 \nu_2 s_1 \nu_2 \nu_1 \nu_3 \nu_2) = (\nu_2 \nu_3 \nu_1 \nu_2 s_1 \nu_2 \nu_1 \nu_3 \nu_2) \tau_1. \]
Multiplying both sides of the equality by $\nu_2 \nu_1 \nu_3 \nu_2$ on the left and by $\nu_2 \nu_3 \nu_1 \nu_2$ on the right, we obtain
\[  (\nu_2 \nu_1 \nu_3 \nu_2 \tau_1 \nu_2 \nu_3 \nu_1 \nu_2) s_1 =  s_1(\nu_2 \nu_1 \nu_3 \nu_2 \tau_1 \nu_2 \nu_3 \nu_1 \nu_2),  \]
which is equivalent to $\tau_3 s_1 = s_1 \tau_3$, according to identity~\eqref{A15}.

Now let $i \geq 4$, and observe the following:
\begin{align*}
s_1 \tau_i & \stackrel{\eqref{A15}}{=}  \underline{s_1}(\nu_{i-1}\ldots \nu_2\nu_1)(\nu_i\ldots \nu_3 \nu_2)\tau_1 (\nu_2 \nu_3\ldots \nu_i)(\nu_1\nu_2\ldots \nu_{i-1}) \\
& \stackrel{\eqref{eqs40}}{=} (\nu_{i-1}\ldots  \underline{s_1 \nu_2\nu_1})(\nu_i\ldots \nu_3 \nu_2)\tau_1 (\nu_2 \nu_3\ldots \nu_i)(\nu_1\nu_2\ldots \nu_{i-1}) \\
&\stackrel{\eqref{eqs46}}{=}  (\nu_{i-1}\ldots  \nu_2\nu_1 \underline{s_2})(\nu_i\ldots \nu_3 \nu_2)\tau_1 (\nu_2 \nu_3\ldots \nu_i)(\nu_1\nu_2\ldots \nu_{i-1}) \\
& = (\nu_{i-1}\ldots  \nu_2\nu_1 )(\nu_i\ldots \underline{s_2\nu_3 \nu_2})\tau_1 (\nu_2 \nu_3\ldots \nu_i)(\nu_1\nu_2\ldots \nu_{i-1})\\
& \stackrel{\eqref{eqs46}}{=} (\nu_{i-1}\ldots  \nu_2\nu_1 )(\nu_i\ldots \nu_3 \nu_2 s_3)\tau_1 (\nu_2 \nu_3\ldots \nu_i)(\nu_1\nu_2\ldots \nu_{i-1})\\
& = (\nu_{i-1}\ldots  \nu_2\nu_1 )(\nu_i\ldots \nu_3 \nu_2) s_3\tau_1 (\nu_2 \nu_3\ldots \nu_i)(\nu_1\nu_2\ldots \nu_{i-1}).
\end{align*}
In the fourth step above we use part (i) of this lemma, to commute $s_2$ with the product $\nu_i \nu_{i-1} \ldots \nu_4$. Using similar steps, it is now easy to see that the following holds:
\begin{align*}
\tau_i s_1 &= (\nu_{i-1}\ldots \nu_2\nu_1)(\nu_i\ldots \nu_3 \nu_2)\tau_1 (\nu_2 \nu_3\ldots \nu_i)(\nu_1\nu_2\ldots \nu_{i-1}) s_1\\
 & = (\nu_{i-1}\ldots  \nu_2\nu_1 )(\nu_i\ldots \nu_3 \nu_2) \tau_1 s_3 (\nu_2 \nu_3\ldots \nu_i)(\nu_1\nu_2\ldots \nu_{i-1}).
\end{align*}
Comparing the equalities we just derived and using that $s_3 \tau_1 = \tau_1 s_3$, we conclude that $s_1 \tau_i = \tau_i s_1$ for all $i \geq 3$. A similar proof can be used to show that $s_i \tau_1 = \tau_1 s_i$, $s_i s_1 = s_1 s_i$ and $\tau_i \tau_1 = \tau_1 \tau_i$, for all $i\geq 3$.

We will now prove that $s_i \tau_j = \tau_j s_i$, for all $|i-j| \geq 2$. Let $|i-j| \geq 2$. 

Case 1. If $i-j \geq 2$, we observe that:
\[ s_i \tau_j \stackrel{\eqref{A15}}{=} s_i(\nu_{j-1}\ldots \nu_2\nu_1)(\nu_j\ldots \nu_3 \nu_2)\tau_1 (\nu_2 \nu_3\ldots \nu_j)(\nu_1\nu_2\ldots \nu_{j-1}), \]
and note that $s_i$ commutes with each $\nu_1, \nu_2, \dots, \nu_j$, by part (i) in this lemma. Moreover, $s_i$ commutes with $\tau_1$, as proved above. Hence, $s_i \tau_j =\tau_js_i$, for $i-j \geq 2$.

 Case 2. If $j - i \geq 2$, then we write:
\[s_i\tau_j \stackrel{\eqref{A14}}{=} (\nu_{i-1}\ldots \nu_2\nu_1)(\nu_i\ldots \nu_3 \nu_2)\tau_1 (\nu_2 \nu_3\ldots \nu_i)(\nu_1\nu_2\ldots \nu_{i-1}) \tau_j.\]
Since $j \geq i+2$, we know that $\tau_j$ commutes with each of the virtual generators $\nu_1, \nu_2, \dots, \nu_j$, again due to part (i) of Lemma~\ref{sigv-tauv}. The generator $\tau_j$ also commutes with $\tau_1$, as explained above. Therefore, $s_i \tau_j = \tau_j s_i$, for $j-i \geq 2$ as well.

The proofs that relations $s_i s_j = s_j s_i$ and  $\tau_i  \tau_j = \tau_j \tau_i$ hold for all $|i-j|\geq 2$ follow similarly, and thus they are omitted to avoid repetition.
\end{proof}

\begin{lemma}
The relations $s_j  s_i \tau_j = \tau_{i} s_j s_{i}$, where $|i-j|=1$, follow from the reduced presentation for $VSTM_{n}$.
\end{lemma}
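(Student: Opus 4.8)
The plan is to reduce the whole family of relations $s_j s_i \tau_j = \tau_i s_j s_i$ (for $|i-j|=1$) to the single base case supplied by relation~\eqref{eqs41}, and then to propagate that base case to all indices by an index-shifting conjugation. First I would record an elementary equivalence, valid because $s_i^2 = s_j^2 = 1$: multiplying $s_j s_i \tau_j = \tau_i s_j s_i$ on the left by $s_j$ and on the right by $s_i$ converts it into $s_i \tau_j s_i = s_j \tau_i s_j$, a relation that is manifestly symmetric under interchanging $i$ and $j$. Hence for each $k$ the two ordered instances $(i,j)=(k,k+1)$ and $(i,j)=(k+1,k)$ amount to the same relation, and it suffices to establish, for every $1\leq k\leq n-2$, the single identity
$$ s_{k+1} s_k \tau_{k+1} = \tau_k s_{k+1} s_k. $$

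For the base case $k=1$ I would invoke relation~\eqref{eqs41}: substituting the defining identities $s_2 = \nu_1\nu_2 s_1 \nu_2\nu_1$ and $\tau_2 = \nu_1\nu_2\tau_1\nu_2\nu_1$ coming from \eqref{A14} and \eqref{A15} rewrites \eqref{eqs41} exactly as $s_2 s_1 \tau_2 = \tau_1 s_2 s_1$, as already remarked in the text. For the inductive step, assuming the relation holds at level $k$ with $k+2\leq n-1$, I would conjugate both sides by the invertible word $w = \nu_k \nu_{k+1}\nu_{k+2}$ (whose inverse is $\nu_{k+2}\nu_{k+1}\nu_k$, since each $\nu_i^2=1$). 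The heart of the argument is to verify the four shift identities
$$ w s_k w^{-1} = s_{k+1}, \qquad w s_{k+1} w^{-1} = s_{k+2}, \qquad w \tau_k w^{-1} = \tau_{k+1}, \qquad w \tau_{k+1} w^{-1} = \tau_{k+2}. $$
Each of these follows from the commuting relations of Lemma~\ref{sigv-tauv}(i) (which let $\nu_{k+2}$ pass through $s_k$ and $\tau_k$, and $\nu_k$ pass through $s_{k+2}$ and $\tau_{k+2}$, the indices differing by at least $2$), together with the adjacent detour relations of Lemma~\ref{lemma:detour_rel} (which replace $\nu_{k+1} s_k \nu_{k+1}$ by $\nu_k s_{k+1}\nu_k$, and likewise for the other terms), followed by cancellation via $\nu_i^2 = 1$.

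Conjugating the level-$k$ relation by $w$ and distributing $w(\,\cdot\,)w^{-1}$ across the product (using $w^{-1}w=1$) then gives $s_{k+2}s_{k+1}\tau_{k+2} = \tau_{k+1}s_{k+2}s_{k+1}$, which is exactly the relation at level $k+1$. Since the base case covers $k=1$ and the induction proceeds as long as $k+2\leq n-1$, this yields the relation for all $1\leq k\leq n-2$; for $n=3$ only the base case arises, so nothing further is required. I expect the only genuine obstacle to be the bookkeeping in the four shift identities, and in particular confirming that the factor $\nu_{k+2}$ is truly needed: conjugation by $\nu_k\nu_{k+1}$ alone does shift $s_k\mapsto s_{k+1}$, but it leaves $\nu_{k+1}s_{k+1}\nu_{k+1}$ unsimplified, so the extra $\nu_{k+2}$ is exactly what moves $s_{k+1}$ up to $s_{k+2}$ (via commutation past the resulting $\nu_k$). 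Finally, by the symmetric form $s_i\tau_j s_i = s_j\tau_i s_j$ noted above, each proved identity simultaneously supplies both ordered instances, completing the proof of $s_j s_i \tau_j = \tau_i s_j s_i$ for all $|i-j|=1$.
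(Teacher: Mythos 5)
Your proof is correct, but it takes a genuinely different route from the paper's. You first use $s_i^2 = s_j^2 = 1$ to replace the relation by its symmetric form $s_i\tau_j s_i = s_j\tau_i s_j$, so that the two ordered instances $(i,j)$ and $(j,i)$ coalesce into one, and you then propagate the base case \eqref{eqs41} up the index ladder by conjugating with $w = \nu_k\nu_{k+1}\nu_{k+2}$; the four shift identities $w s_k w^{-1} = s_{k+1}$, $w s_{k+1} w^{-1} = s_{k+2}$, $w\tau_k w^{-1} = \tau_{k+1}$, $w\tau_{k+1}w^{-1} = \tau_{k+2}$ do check out, using exactly Lemma~\ref{lemma:detour_rel}, Lemma~\ref{sigv-tauv}(i) and $\nu_i^2 = 1$, all of which are established before this lemma, and conjugation is legitimate in the monoid since $w$ is invertible (as is the left/right cancellation by $s_j$ and $s_i$, since $s_i^2=1$ was proved earlier from the reduced presentation). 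The paper argues differently: it expands $s_i$, $s_{i+1}$, $\tau_i$, $\tau_{i+1}$ through the defining relations \eqref{A14} and \eqref{A15} into words in $s_1$, $\tau_1$ and the virtual generators, and then rewrites both $\tau_i s_{i+1}s_i$ and $s_{i+1}s_i\tau_{i+1}$ into one common normal form by a long word computation invoking \eqref{eqs45}, \eqref{eqs37}, \eqref{eqs38}, \eqref{eqs40}, and finally the base relation \eqref{eqs41}, treating the case $j=i-1$ as analogous. Your induction-by-conjugation is shorter and more structural: it isolates the detour shift $x_k\mapsto wx_kw^{-1}=x_{k+1}$ as a reusable tool and handles both orderings at once through the symmetry reduction, whereas the paper's computation stays closer to the diagrammatic detour-move picture that motivates the whole section and never needs the symmetric reformulation. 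Since both arguments rest on the same previously proven lemmas, the logical dependencies are identical, and your proof is a valid substitute.
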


\begin{proof}
The base case for this set of relations is represented by the relation ~\eqref{eqs41}. We show that the relations hold for the case $j = i+1$ and $i \geq $2.  The proof for the case $j = i-1$ and $i\geq 3$ follows similarly. For the right hand side of the identity, we have:
\begin{align*}
\tau_i  s_{i+1} s_i  \displaystyle  \mathop{=}_{\eqref{A15}}^{\eqref{A14}}&  [ (\nu_{i-1}\ldots \nu_2\nu_1)(\nu_{i}\ldots \nu_3\nu_2)\tau_1  (\nu_2\nu_3\ldots \nu_{i}) \underline{(\nu_1\nu_2\ldots \nu_{i-1}) }] \cdot\\
& [\underline{ (\nu_i\ldots \nu_2 \nu_1) }(\nu_{i+1}\ldots \nu_3 \nu_2) s_1 (\nu_2 \nu_3 \ldots \nu_{i+1}) \underline{ (\nu_1 \nu_2 \ldots \nu_i) }]\cdot\\
&  [ \underline{(\nu_{i-1} \ldots \nu_2 \nu_1) } (\nu_{i} \ldots \nu_3 \nu_2) s_1  (\nu_2 \nu_3 \ldots \nu_{i})(\nu_1 \nu_2 \ldots \nu_{i-1}) ]\\
\stackrel{\eqref{eqs45}}{=}&  (\nu_{i-1}\ldots \nu_2 \nu_1)(\nu_{i}\ldots \nu_3 \nu_2) \tau_1  (\underline{\nu_2 \nu_3 \ldots \nu_{i}) (\nu_i \ldots \nu_2 } \nu_1 \nu_2 \ldots \nu_i) (\nu_{i+1}\ldots \nu_3 \nu_2)s_1 \cdot \\
&(\nu_2 \nu_3 \ldots \nu_{i+1})  (\nu_i \ldots  \nu_2 \nu_1 \underline{ \nu_2 \ldots \nu_i) (\nu_{i}\ldots \nu_3\nu_2)} s_1  (\nu_2\nu_3\ldots \nu_{i})(\nu_1\nu_2\ldots \nu_{i-1}) \\
\stackrel{\eqref{eqs38}}{=}& (\nu_{i-1}\ldots \nu_2\nu_1)(\nu_{i}\ldots \nu_3\nu_2)(\tau_1   \nu_1 \underline{ \nu_2\ldots \nu_i) (\nu_{i+1}\ldots \nu_3\nu_2) } s_1  \cdot \\ 
&  \underline{ (\nu_2 \nu_3\ldots \nu_{i+1}) (\nu_i \ldots  \nu_2 } \nu_1  s_1 )(\nu_2\nu_3\ldots \nu_{i})(\nu_1\nu_2\ldots \nu_{i-1}) \\
\stackrel{\eqref{eqs45}}{=}&  (\nu_{i-1}\ldots \nu_2\nu_1)(\nu_{i}\ldots \nu_3\nu_2)(\underline {\tau_1   \nu_1}  \nu_{i+1} \ldots \nu_3 \nu_2 \nu_3\ldots \nu_{i+1}) \underline{ s_1}  \cdot \\ 
&(\nu_{i+1} \ldots \nu_3 \nu_2 \nu_3  \ldots  \nu_{i+1}  \underline{ \nu_1  s_1 } )  (\nu_2 \nu_3\ldots \nu_{i})(\nu_1 \nu_2\ldots \nu_{i-1}) \\
\displaystyle  \mathop{=}_{\eqref{eqs37}}^{\eqref{eqs40}}  &   (\nu_{i-1}\ldots v_2 \nu_1)(\nu_{i}\ldots \nu_3 \nu_2) (\nu_{i+1} \ldots \nu_3) (\tau_1   \nu_1 \nu_2  \underline{\nu_3 \ldots \nu_{i+1})}\cdot \\
  & \underline{(\nu_{i+1} \ldots \nu_3 } s_1 \nu_2 \nu_1  s_1  \nu_3 \ldots  \nu_{i+1}) (\nu_2 \nu_3 \ldots \nu_{i})(\nu_1 \nu_2\ldots \nu_{i-1})  \\
\stackrel{\eqref{eqs38}}{=}&  (\nu_{i-1}\ldots \nu_1)(\nu_{i}\ldots \nu_2) ( \nu_{i+1} \ldots \nu_3) ( \tau_1   \nu_1 \nu_2  s_1 \nu_2 \nu_1  s_1) \cdot \\
&( \nu_3  \ldots  \nu_{i+1} )     (\nu_2\ldots \nu_{i})(\nu_1\ldots \nu_{i-1}).
\end{align*}

Now we will consider the left hand side of the identity, and we start by replacing $s_i$ using the defining relation~\eqref{A14} and inserting the identity element, which is then replaced by $\nu_1 \nu_1$ and $(\nu_{i+1}\ldots \nu_3) (\nu_3 \ldots  \nu_{i+1})$, as shown below:
\begin{align*}
s_{i+1} \underline{ s_i} \tau_{i+1} \stackrel{\eqref{A14}}{=}& s_{i+1} (\nu_{i-1}\ldots \nu_1)  (\nu_i \ldots  \nu_2 \underline{(1)}) s_1 \underline{(1)(1)}(\nu_2 \ldots \nu_{i}) (\nu_1\ldots \nu_{i-1})  \tau_{i+1} \\
= \,\,\,& \underline{ s_{i+1} } (\nu_{i-1}\ldots \nu_1)  (\nu_i \ldots \nu_2 \nu_1 \nu_1) s_1 \underline{(\nu_{i+1}\ldots \nu_3)}   (\nu_3 \ldots  \nu_{i+1} )  (\underline{\nu_1} \nu_1 )\cdot \\
&( \nu_2\ldots \nu_{i}) (\nu_1\ldots \nu_{i-1}) \underline{\tau_{i+1}}. 
\end{align*}
Employing the commuting relations in Lemma~\ref{sigv-tauv} and those in Equations~\eqref{eqs37} and~\eqref{eqs40}, we obtain:
\begin{align*}
s_{i+1}  s_i \tau_{i+1} = \,\,\,&  (\nu_{i-1}\ldots \nu_1) s_{i+1}  (\nu_i \ldots \nu_{1})(\nu_{i+1}\ldots \nu_3)\underline{(1)}  \nu_1  s_1 \,\nu_1 \underline{(1)} (\nu_3 \ldots  \nu_{i+1} ) \cdot \\
& (\nu_1\ldots \nu_{i}) \tau_{i+1} (\nu_1\ldots \nu_{i-1}) \\
= \,\,\, &  (\nu_{i-1}\ldots \nu_1) \underline{s_{i+1}} (\nu_i \ldots \nu_{1})(\nu_{i+1}\ldots \nu_3) ( \nu_2   \nu_2 ) \nu_1  s_1 \nu_1 (\nu_2   \nu_2 )( \nu_3 \ldots  \nu_{i+1} )  \cdot \\
&(\nu_1\ldots \nu_{i}) \underline{\tau_{i+1}}(\nu_1\ldots \nu_{i-1})\\
\displaystyle  \mathop{=}_{\eqref{A15}}^{\eqref{A14}}  &  (\nu_{i-1}\ldots \nu_1)  (\nu_{i}\ldots \nu_1) ( \nu_{i+1} \ldots \nu_2 ) s_1 \underline{(\nu_2 \ldots \nu_{i+1})(\nu_1 \ldots \nu_{i})} \cdot \\
&\underline{(\nu_i \ldots \nu_{1})(\nu_{i+1}\ldots \nu_{2}) } \nu_2 \nu_1  s_1 \nu_1 \nu_2  \underline{( \nu_2 \ldots  \nu_{i+1} ) }\cdot \\
& \underline{(\nu_1\ldots \nu_{i}) (\nu_{i}\ldots \nu_1) ( \nu_{i+1} \ldots \nu_2 )}\tau_1 ( \nu_2 \ldots  \nu_{i+1} ) (\nu_1\ldots \nu_{i}) (\nu_1\ldots \nu_{i-1}).
\end{align*}

We next use relations~\eqref{eqs38} to replace the product $( \nu_2 \ldots  \nu_{i+1} )(\nu_1\ldots \nu_i) (\nu_i\ldots \nu_1) ( \nu_{i+1} \ldots \nu_2 )$ with identity. We then use the commuting relations \eqref{eqs37} to move two instances of $\nu_1$ to desired locations. In the last step below, we employ the relation~\eqref{eqs41}.
\begin{align*}
s_{i+1}  s_i \tau_{i+1} 
 \stackrel{\eqref{eqs38}}{=}& (\nu_{i-1}\ldots \nu_1)(\nu_{i}\ldots \underline{\nu_1}) ( \nu_{i+1} \ldots \nu_2 ) s_1 \nu_2 \nu_1  s_1 \nu_1 \nu_2 \tau_1 ( \nu_2 \nu_3 \ldots  \nu_{i+1} ) \cdot \\
 & (\underline{\nu_1}\ldots \nu_{i})(\nu_1\ldots \nu_{i-1}) \\%
 \stackrel{\eqref{eqs37}}{=}&  (\nu_{i-1}\ldots \nu_1)(\nu_{i}\ldots \nu_2) ( \nu_{i+1} \ldots \nu_3)(\underline{\nu_1 \nu_2 s_1  \nu_2 \nu_1 ) s_1 (\nu_1 \nu_2 \tau_1 \nu_2  \nu_1 }  ) \cdot
  \\
& ( \nu_3 \ldots  \nu_{i+1} )   (\nu_2\ldots \nu_{i})(\nu_1\ldots \nu_{i-1}) \\
\stackrel{\eqref{eqs41}}{=}&  (\nu_{i-1}\ldots \nu_1)(\nu_{i}\ldots \nu_2) ( \nu_{i+1} \ldots \nu_3) ( \tau_1   \nu_1 \nu_2  s_1 \nu_2 \nu_1  s_1) \cdot \\
&( \nu_3 \ldots  \nu_{i+1} )   (\nu_2\ldots \nu_{i})(\nu_1\ldots \nu_{i-1}).
\end{align*}
Therefore, $s_{i+1} s_i \tau_{i+1}  = \tau_i  s_{i+1} s_i$ for all $i \geq $2. 
\end{proof}

\subsection{Yet Another Presentation for $VSTM_n$}
In this section, we introduce a new presentation for the $n$-stranded virtual singular twin monoid, $VSTM_n$. This presentation uses as generators a special class of virtual singular twin braids, which we  refer to as connecting strings. What distinguishes these elements of $VSTM_n$ is that their associated permutation in $S_n$ is the identity permutation; hence they are virtual singular pure twin braids in the sense of Definition~\ref{Virtual Singular Pure Twin Group}. We note that similar approaches were used  by L. H. Kauffman and S. Lampropoulou in~\cite{KL2} for the virtual braid group, and by C. Caprau and S. Zepeda in~\cite{Caprau1} for the virtual singular braid monoid.

  \begin{definition}\label{def:cstrings}
  The \textit{connecting strings} $\mu_i$ and $\gamma_i$, where $1 \leq i \leq n-1$, are $n$-stranded virtual singular twin braids defined as follows, and depicted in Fig.~\ref{fig:connecting-strings}.
   \begin{eqnarray} \label{eqs_connecting_strings}
   \mu_i: = s_i \nu_i ,  \hspace{0.5cm} \gamma_i: = \tau_i \nu_i, \,\,\, \text{where} \,\,\,1 \leq i \leq n-1.
   \end{eqnarray}
    \end{definition} 
    
     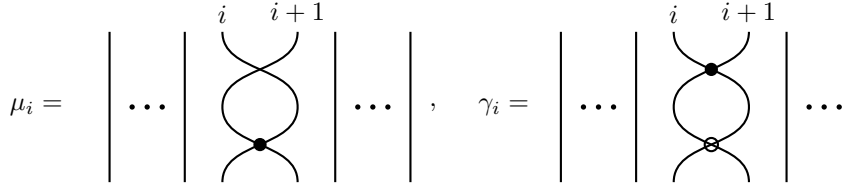
\begin{figure}[h!]   
\begin{tikzpicture}
	\draw[thick] (5.5,-1)--(5.5,1); 
   \fill (5,0) circle(1pt) (4.8,0) circle(1pt)(5.2,0)circle(1pt);       
    \draw[thick] (4.5,-1)--(4.5,1);
    \draw[thick] (8.5,-1)--(8.5,1); 
    \fill (8,0) circle(1pt) (8.2,0) circle(1pt)(7.8,0)circle(1pt);       
   \draw[thick] (7.5,-1)--(7.5,1);    
	\draw[thick] (7,-1) to[out=90, in=-90] (6,0);
    \draw[thick] (6,-1) to[out=90, in=-90] (7,0);
    \draw[thick] (7,0) to[out=90, in=-90] (6,1);
    \draw[thick] (6,0) to[out=90, in=-90] (7,1);
\draw[thick] (6.5,-0.5) circle (2.5pt); 
   \fill[black] (6.5,0.5) circle (2.5pt); 
    \node[above] at(6,1){$i$};
    \node[above] at (7,1) {$i+1$};
     \node[left] at(-2,0){$\mu_i=$};
	\node at (2.8,0){,};    
 	\draw[thick] (-1.5,-1)--(-1.5,1); 
    \fill (-1,0) circle(1pt) (-1.2,0) circle(1pt)(-0.8,0)circle(1pt);       
    \draw[thick] (-.5,-1)--(-.5,1);
    \draw[thick] (2.5,-1)--(2.5,1); 
    \fill (2,0) circle(1pt) (2.2,0) circle(1pt)(1.8,0)circle(1pt);       
    \draw[thick] (1.5,-1)--(1.5,1);
    \draw[thick] (0,0) to[out=90, in=-90] (1,1);
    \draw[thick] (1,0) to[out=90, in=-90] (0,1);
     \draw[thick] (1, -1) to[out=90, in=-90] (0,0);
    \draw[thick] (0,-1) to[out=90, in=-90] (1,0);
   \fill[black] (0.5,-0.5) circle (2.5pt); 
       \node[above] at(0,1){$i$};
    \node[above] at(1,1){$i+1$};
     \node[left] at(4.2,0){$\gamma_i=$};
  \end{tikzpicture}
   \caption{The connecting strings $\mu_i$ and $\gamma_i$}
   \label{fig:connecting-strings}
 \end{figure}
 
It is easy to observe that the elements $s_i$ and $\tau_i$ can be described in terms of the connecting strings as $s_i=\mu_{i} \nu_i$ and $\tau_i=\gamma_{i} \nu_{i}.$
Thus, the connecting strings $\mu_{i}$ and $\gamma_{i}$, together with the virtual generators $\nu_i$, for all $1\leq i \leq n-1$, can be used as an alternative generating set for $VSTM_n$ and $VST_n$. The following lemma provides a collection of relations satisfied by the connecting strings. In the proof of each part of the lemma, we underline again the portion of the word that is modified in the subsequent step, using a defining relation from the original presentation for $VSTM_n$, according to Definition~\ref{def:VSTM}. Note that the set of relations (i) in the following lemma introduces a version of the detour relations for the connecting strings, while the last set of relations (iv) are the commuting relations for the elements $\mu_i,\gamma_i$ and $\nu_i$.

\begin{lemma}\label{lemma:rel_connecting_strings}
The following relations hold in $VSTM_n$.
\begin{enumerate}
\item [(i)]  $\nu_i\mu_{j} \nu_{i} = \nu_{j} \mu_{i} \nu_{j}$ and $ \nu_i\gamma_{j} \nu_{i} = \nu_{j}\gamma_{i} \nu_{j}$, where $|i-j|=1$
\item [(ii)] $\mu_{j}( \nu_{j}\mu_{i} \nu_{j})\gamma_i=\gamma_{i}( \nu_{j}\mu_{i} \nu_{j})\mu_{j}$, where $|i-j|=1$
 \item [(iii)] $\mu_{i} \nu_i\gamma_i = \gamma_i \nu_i\mu_{i}$, for all $1\leq i \leq n-1$
 \item [(iv)] $\alpha_i\beta_j=\beta_j\alpha_i,\,\text{for } \alpha_i,\beta_i\in \{\mu_i,\gamma_i, \nu_i\}$, where $|i-j| \geq 2$.

 \end{enumerate}
\end{lemma}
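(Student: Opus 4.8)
The plan is to prove each of the four identities by substituting the definitions $\mu_i = s_i \nu_i$ and $\gamma_i = \tau_i \nu_i$ from Definition~\ref{def:cstrings} and then reducing both sides of each equation to a common word using only the defining relations of $VSTM_n$: the involutions~\eqref{eqs20}, the commuting relations, the relations~\eqref{eqs21} and~\eqref{eqs22}, and the detour relations~\eqref{eqs23}--\eqref{eqs25}. Before starting, I would record two rewriting consequences of the detour relations that I expect to use repeatedly. Multiplying~\eqref{eqs24} on the right by $\nu_j$ gives $\nu_j s_i = \nu_i s_j \nu_i \nu_j$, whence $\nu_i \nu_j s_i = s_j \nu_i \nu_j$ for $|i-j|=1$ (and symmetrically $\nu_j \nu_i s_j = s_i \nu_j \nu_i$); in exactly the same way,~\eqref{eqs25} yields $\nu_i \nu_j \tau_i = \tau_j \nu_i \nu_j$.

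Parts (i), (iii), and (iv) are short. For (iii), substituting and cancelling $\nu_i^2 = 1$ reduces $\mu_i \nu_i \gamma_i = \gamma_i \nu_i \mu_i$ to $s_i \tau_i \nu_i = \tau_i s_i \nu_i$, which is precisely relation~\eqref{eqs21}. For (i), I would substitute $\mu_j = s_j\nu_j$, insert $\nu_i^2 = 1$ to factor $\nu_i s_j \nu_j \nu_i = (\nu_i s_j \nu_i)(\nu_i \nu_j \nu_i)$, apply the detour relations~\eqref{eqs24} and~\eqref{eqs23}, and then cancel $\nu_j^2 = 1$ to reach $\nu_j s_i \nu_i \nu_j = \nu_j \mu_i \nu_j$; the $\gamma$-version is verbatim with $\tau$ for $s$ and~\eqref{eqs25} for~\eqref{eqs24}. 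For (iv), since for $|i-j|\geq 2$ each of $s_i,\tau_i,\nu_i$ commutes letter-by-letter with each of $s_j,\tau_j,\nu_j$ by the commuting relations, the products $\mu_i,\gamma_i,\nu_i$ commute with $\mu_j,\gamma_j,\nu_j$ immediately.

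The main obstacle is part (ii), which encodes the singular relation~\eqref{eqs22} and demands the most careful bookkeeping of the virtual letters. I would expand the left-hand side to $s_j s_i\,\nu_i \nu_j \tau_i\, \nu_i$, apply the auxiliary rule $\nu_i \nu_j \tau_i = \tau_j \nu_i \nu_j$ and then the braid relation $\nu_i \nu_j \nu_i = \nu_j \nu_i \nu_j$ to reach $s_j s_i \tau_j\, \nu_j \nu_i \nu_j$, and finally invoke~\eqref{eqs22} in its symmetric form $s_j s_i \tau_j = \tau_i s_j s_i$ (obtained by interchanging $i$ and $j$) to land on $\tau_i s_j s_i\, \nu_j \nu_i \nu_j$. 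Independently, I would expand the right-hand side to $\tau_i\,\nu_i \nu_j s_i\, \nu_i \nu_j s_j \nu_j$, use $\nu_i \nu_j s_i = s_j \nu_i \nu_j$ together with the braid relation to collapse $\nu_i \nu_j \nu_i \nu_j$ to $\nu_j \nu_i$, and then apply $\nu_j \nu_i s_j = s_i \nu_j \nu_i$ to arrive again at $\tau_i s_j s_i\, \nu_j \nu_i \nu_j$. Matching the two reductions completes the proof. The only delicate point is tracking on which side each lone $\nu$ must be cancelled, but once the two rewriting rules above are fixed, every rewriting step is forced.
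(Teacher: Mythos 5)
Your proof is correct and follows essentially the same route as the paper's: substitute $\mu_i = s_i\nu_i$, $\gamma_i = \tau_i\nu_i$, use the two-sided forms of the detour relations (the paper's $s_i\nu_j\nu_i = \nu_j\nu_i s_j$ and $\tau_i\nu_j\nu_i = \nu_j\nu_i\tau_j$), and for part (ii) pass through the same key word $\tau_i s_j s_i\,\nu_j\nu_i\nu_j$. The only difference is presentational: you reduce both sides of (ii) independently to that common word, while the paper rewrites the left-hand side into the right-hand side in a single chain that contains the same word as an intermediate step.
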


\begin{proof} 

(i) The desired relations follow from the detour relations of $VSTM_n$, as demonstrated below:
\[
\nu_i\underline{\mu_{j}} \nu_i = \nu_i s_{j}\nu_{j}\nu_i\underline{1} = \nu_is_{j}\underline{\nu_{j}\nu_i \nu_{j}}\nu_{j} = \underline{\nu_is_{j}\nu_{i}}\nu_{j}\nu_{i}\nu_{j} = \nu_{j}\underline{s_{i} (\nu_{j}\nu_{j})\nu_{i}}\nu_{j}  = \nu_{j}\mu_i \nu_{j},
\]
\[
\nu_i\underline{\gamma_{j}} \nu_i = \nu_i\tau_{j}\nu_{j}\nu_i \underline{1}= \nu_i\tau_{j}\underline{\nu_{j}\nu_i \nu_{j}}\nu_{j} = \underline{\nu_i\tau_{j}\nu_{i}}\nu_{j}\nu_{i}\nu_{j} = \nu_{j}\underline{\tau_{i}(\nu_{j}\nu_{j})\nu_{i}}\nu_{j} = \nu_{j}\gamma_i \nu_{j}.
\]

(ii)  We remark that the relations in part (i) are equivalent to the following relations:
\[ \mu_{j} \nu_{i} \nu_{j} = \nu_i \nu_{j} \mu_{i} \,\,  \text{and} \,\, \gamma_{j} \nu_{i} \nu_{j} = \nu_{i} \nu_{j}\gamma_{i}.\]
This will be used in the proof of the relations in (ii). In the first and last steps below, we employ the defining relations for the connecting strings $\mu_i$ and $\gamma_i$, according to Definition~\ref{def:cstrings}.
\begin{align*}
\mu_{j}( \nu_{j}\mu_{i}  \nu_{j})\gamma_i &=s_{j}\underline{ \nu_{j} \nu_{j}}s_i \underline{ \nu_i  \nu_{j}\tau_i}  \nu_i
=\underline{s_{j}s_i\tau_{j}}\,\,\underline{ \nu_i  \nu_{j}  \nu_i}
=\tau_{i}s_{j}\underline{s_{i} \nu_{j} \nu_{i}} \nu_{j}\\
&=\tau_{i}s_{j}  \nu_{j}  \nu_{i}\underline{1}s_{j}  \nu_{j}
=\ \tau_{i} s_{j}\underline{ \nu_{j} \nu_{i} \nu_{j}} \nu_{j}s_{j} \nu_{j}
= \tau_{i}\underline{s_{j} \nu_{i}  \nu_{j}}  \nu_{i}  \nu_{j}s_{j}  \nu_{j}\\
&= ( \underline{\tau_{i}  \nu_{i}} )  \nu_{j} (\underline{s_{i}  \nu_{i}} ) \nu_{j} (\underline{s_{j} \nu_{j}})
= \gamma_i( \nu_{j}\mu_i  \nu_{j})\mu_{j}.
\end{align*}

(iii) Let $1\leq i \leq n-1$. Using the defining relations for $\mu_i$ and $\gamma_i$, the identity relation $\nu_i^2=1$, together with relation $s_i\tau_i=\tau_is_i$, which hold in $VSTM_n$, we obtain the desired result:
\[
\mu_i \nu_i\gamma_i = (s_i\underline{\nu_i) \nu_i}(\tau_i \nu_i)
=\underline{s_i\tau_i} \nu_i
= \tau_i\underline{1}s_i \nu_i
= (\tau_i \nu_i) \nu_i(s_i \nu_i)
= \gamma_i \nu_i\mu_i.
\]

(iv) The commuting relations for the connecting strings, as stated in  (iv), follow immediately from the commuting relations given in the original presentation for $VSTM_n$.
\end{proof}

We are now ready to define a monoid $M_n$ generated by the connecting strings, together with the virtual elements $\nu_i$. Note that the elements $\nu_i$ and $\mu_i$ are invertible, with $\nu_i^{-1} = \nu_i$ and $\mu_i^{-1} = \nu_is_i$.
 The relations established in the preceding lemma become defining relations for this monoid, along with the defining relations of $VSTM_n$ involving only the virtual generators. We will then show that $M_n$ is isomorphic to $VSTM_n$, which in turn yields an alternative presentation for $VSTM_n$.

\begin{definition} \label{def:mn}
Let $M_n$ be the monoid with the following presentation using generators $\{\mu_i, \mu_i^{-1},  \gamma_i, \nu_i  \, \big{\vert}  \,1\leq i \leq {n-1}\}$ and relations:
     \begin{align}
     \nu_i^2=1 \,\, &\text{and}\,\, \, \mu_i\mu_i^{-1}=1=\mu_i^{-1}\mu_i, \,\,\, \text{for all } i \label{eq:mnid}\\
       \nu_i \nu_j \nu_i&= \nu_j \nu_i \nu_j,  \,\,  |i-j|=1 \label{eq:mnv3}\\
       \nu_i \mu_j \nu_i&= \nu_j \mu_i \nu_j,  \,\,  |i-j|=1\label{eq:mnvr3}\\
 \nu_i \gamma_j \nu_i&= \nu_j \gamma_i \nu_j, \,\,  |i-j|=1\label{eq:mnvs3}\\
       \mu_j(\nu_j \mu_i \nu_j) \gamma_i &=\gamma_i (\nu_j \mu_i \nu_j) \mu_j, \,\,   |i-j|=1\label{eq:mnrs31}\\
         \mu_i \nu_i\gamma_i &=\gamma_i \nu_i \mu_i,  \,\,\, \text{for all } i  \label{eq:mnr1}\\
    \alpha_i\beta_j&= \beta_j\alpha_i,\,\,\,\,|i-j| \geq 2,\,\,  \text{ for all } \alpha_i, \beta_i \in \{\mu_i, \gamma_i, \nu_i\}\label{eq:mnfc}
\end{align} 
\end{definition}

\begin{theorem}\label{theorem:iso}
The monoids $M_n$ and $VSTM_n$ are isomorphic.
\end{theorem}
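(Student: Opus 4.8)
The plan is to construct mutually inverse monoid homomorphisms between $M_n$ and $VSTM_n$ and verify that they are well defined by checking that each respects the defining relations of the source monoid. Define $\Phi \colon M_n \to VSTM_n$ on generators by $\Phi(\mu_i) = s_i\nu_i$, $\Phi(\mu_i^{-1}) = \nu_is_i$, $\Phi(\gamma_i) = \tau_i\nu_i$, and $\Phi(\nu_i) = \nu_i$, mirroring Definition~\ref{def:cstrings}. Define $\Psi \colon VSTM_n \to M_n$ on generators by $\Psi(s_i) = \mu_i\nu_i$, $\Psi(\tau_i) = \gamma_i\nu_i$, and $\Psi(\nu_i) = \nu_i$, using the observation recorded just before Definition~\ref{def:mn} that $s_i = \mu_i\nu_i$ and $\tau_i = \gamma_i\nu_i$.

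First I would show $\Phi$ is well defined. Each defining relation of $M_n$ from \eqref{eq:mnid} through \eqref{eq:mnfc} must map to a relation that holds in $VSTM_n$. The virtual relations \eqref{eq:mnid}, \eqref{eq:mnv3} and the commuting relations \eqref{eq:mnfc} map to the corresponding virtual and commuting relations of $VSTM_n$ directly. The relations \eqref{eq:mnvr3}, \eqref{eq:mnvs3}, \eqref{eq:mnrs31}, and \eqref{eq:mnr1} map, under $\Phi$, exactly to the identities proved in parts (i), (ii), (iii), and (iv) of Lemma~\ref{lemma:rel_connecting_strings}; since those identities were established to hold in $VSTM_n$, the map $\Phi$ respects all defining relations of $M_n$ and hence extends to a well-defined monoid homomorphism. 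For the invertibility clause $\mu_i\mu_i^{-1} = 1$ in \eqref{eq:mnid}, I would check $\Phi(\mu_i)\Phi(\mu_i^{-1}) = s_i\nu_i\nu_is_i = s_i^2 = 1$ using $\nu_i^2 = s_i^2 = 1$.

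Next I would show $\Psi$ is well defined by verifying it respects each original defining relation of $VSTM_n$ from Definition~\ref{def:VSTM}. The relations $s_i^2 = 1$, $\nu_i^2 = 1$, the detour relation $\nu_i\nu_j\nu_i = \nu_j\nu_i\nu_j$, and the commuting relations translate into statements in $M_n$ that either hold by the defining relations of $M_n$ or follow after substituting $\Psi(s_i) = \mu_i\nu_i$ and $\Psi(\tau_i) = \gamma_i\nu_i$ and simplifying with \eqref{eq:mnid}--\eqref{eq:mnfc}. For instance, $\Psi(s_i)^2 = \mu_i\nu_i\mu_i\nu_i$, and I would rewrite this using \eqref{eq:mnvr3} in the degenerate-looking form together with $\nu_i^2=1$ to reduce it to the identity; the singular relations $\tau_is_i = s_i\tau_i$ and $s_is_j\tau_i = \tau_js_is_j$ and the virtual-detour relations $\nu_is_j\nu_i = \nu_js_i\nu_j$, $\nu_i\tau_j\nu_i = \nu_j\tau_i\nu_j$ translate into relations \eqref{eq:mnr1}, \eqref{eq:mnrs31}, \eqref{eq:mnvr3}, \eqref{eq:mnvs3} respectively after the substitution and a short algebraic manipulation.

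Finally I would confirm that $\Phi$ and $\Psi$ are mutually inverse by checking the composites on generators: $\Psi(\Phi(\mu_i)) = \Psi(s_i\nu_i) = \mu_i\nu_i\nu_i = \mu_i$, $\Psi(\Phi(\gamma_i)) = \gamma_i\nu_i\nu_i = \gamma_i$, $\Psi(\Phi(\nu_i)) = \nu_i$, and symmetrically $\Phi(\Psi(s_i)) = \Phi(\mu_i\nu_i) = s_i\nu_i\nu_i = s_i$, $\Phi(\Psi(\tau_i)) = \tau_i\nu_i\nu_i = \tau_i$, using $\nu_i^2 = 1$ in each case; since the composites fix all generators they equal the respective identity homomorphisms. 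I expect the main obstacle to be the bookkeeping in verifying that $\Psi$ respects the more intricate relations, especially the singular braid-type relation $s_is_j\tau_i = \tau_js_is_j$ and the detour relations, where the substituted words in $\mu_i, \gamma_i, \nu_i$ must be carefully rearranged using \eqref{eq:mnvr3}--\eqref{eq:mnrs31} and the identities $\mu_i^{-1} = \nu_is_i = \nu_i\mu_i\nu_i$; translating each source relation into precisely one of the target defining relations is the delicate step, whereas well-definedness of $\Phi$ is essentially immediate from Lemma~\ref{lemma:rel_connecting_strings}.
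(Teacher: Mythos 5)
Your construction is the same as the paper's: the paper defines $F(\nu_i)=\nu_i$, $F(\mu_i)=s_i\nu_i$, $F(\gamma_i)=\tau_i\nu_i$ and $G(\nu_i)=\nu_i$, $G(s_i)=\mu_i\nu_i$, $G(\tau_i)=\gamma_i\nu_i$, invokes Lemma~\ref{lemma:rel_connecting_strings} for one direction, checks the other directly, and verifies the composites on generators. However, the step you yourself single out as delicate --- showing $\Psi(s_i)^2=1$ in $M_n$ --- is a genuine gap, and your proposed repair cannot work: relation \eqref{eq:mnvr3} is imposed only for $|i-j|=1$, so there is no ``degenerate'' instance with $i=j$ to invoke. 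Worse, no manipulation whatsoever can succeed, because $(\mu_i\nu_i)^2\neq 1$ in $M_n$ as presented in Definition~\ref{def:mn}. To see this, map $\mu_i\mapsto 1$, $\mu_i^{-1}\mapsto -1$, $\gamma_i\mapsto 0$, $\nu_i\mapsto 0$ into the additive monoid $(\mathbb{Z},+)$: every defining relation \eqref{eq:mnid}--\eqref{eq:mnfc} has the same signed count of $\mu^{\pm 1}$-letters on its two sides (one on each side in \eqref{eq:mnvr3} and \eqref{eq:mnr1}, two on each side in \eqref{eq:mnrs31}, balanced or zero elsewhere), so this assignment extends to a monoid homomorphism $\phi\colon M_n\to\mathbb{Z}$; but $\phi\bigl((\mu_i\nu_i)^2\bigr)=2\neq 0$. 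Consequently there is \emph{no} monoid homomorphism $VSTM_n\to M_n$ sending $s_i\mapsto\mu_i\nu_i$ at all, since it would have to send the identity $s_i^2$ to the non-identity $(\mu_i\nu_i)^2$. The same obstruction resurfaces in your composite check, which you skip for the generator $\mu_i^{-1}$: one needs $\Psi(\Phi(\mu_i^{-1}))=\nu_i\mu_i\nu_i$ to equal $\mu_i^{-1}$, and that identity is likewise underivable.

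To be fair, this is precisely the point the paper's own proof buries in the sentence ``It is straightforward to verify that $G$ preserves the defining relations of $VSTM_n$'' --- that claim is false for the presentation literally given in Definition~\ref{def:mn}, so you have inherited (and, to your credit, flagged) a flaw in the source. The braid-monoid template being imitated (Kauffman--Lambropoulou, Caprau--Zepeda) avoids the problem because there $\sigma_i^{-1}$ is a separate monoid generator and $\sigma_i\sigma_i^{-1}=1$ translates verbatim into $R_iR_i^{-1}=1$, which is among the relations; in the twin setting $s_i$ is an involution, and $s_i^2=1$ translates into $\mu_i\nu_i\mu_i\nu_i=1$, equivalently $\mu_i^{-1}=\nu_i\mu_i\nu_i$, a relation that does hold for the connecting strings in $VSTM_n$ but was omitted from \eqref{eq:mnid}--\eqref{eq:mnfc}. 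Once this relation is added to Definition~\ref{def:mn}, the counting homomorphism above no longer exists, $\Psi(s_i)^2=1$ and $\Psi(\Phi(\mu_i^{-1}))=\mu_i^{-1}$ become immediate, the remaining relations of $VSTM_n$ (including $s_is_j\tau_i=\tau_js_is_j$, which can be derived from \eqref{eq:mnrs31}, \eqref{eq:mnvr3}, \eqref{eq:mnvs3} and the $\nu$-relations) do translate correctly, and both your argument and the paper's go through.
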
   

\begin{proof}
We define the map $F: M_n \longrightarrow VSTM_n$ given by 
\[F(\nu_i) = \nu_i, \, F(\mu_i) = s_i \nu_i  \,\, \text{and} \,  F(\gamma_{i})=\tau_i \nu_i,\]
and extended to all elements of $M_n$ by imposing that is a homomorphism. By Lemma~\ref{lemma:rel_connecting_strings},  together with the definition for the monoid $M_n$, the homomorphism $F$ preserves the relations for $M_n$. To show that $F$ is an isomorphism, consider the map $G: VSTM_n\longrightarrow M_n$ defined on the generators of $VSTM_n$ by, 
 \[G(\nu_i) = \nu_i,\, G(s_i) = \mu_i \nu_i \, \, \text{and}\, G(\tau_i) = \gamma_i \nu_i,\]
and extended to all elements of $VSTM_n$ by imposing that it is a homomorphism.
It is straightforward to verify that $G$ preserves the defining relations of $VSTM_n$. Moreover, we observe that $F\circ G$ and $G\circ F$ are the identity maps on $VSTM_n$ and $M_n$, respectively. Therefore, the monoid $M_n$ is isomorphic to the virtual singular twin monoid $VSTM_n$.
\end{proof}

We now use the detour move to obtain a reduced presentation for the monoid $M_n$ by expressing the generators $\mu_i$ and $\gamma_i$ in terms of 
 $\mu_1$, and $\gamma_1$, respectively, as follows:
\[\mu_{i+1}= (\nu_i\dots \nu_1)(\nu_{i+1}\dots \nu_3 \nu_2)\mu_1(\nu_2 \nu_3 \dots \nu_{i+1})(\nu_1 \nu_2 \dots  \nu_i)\]
\[\gamma_{i+1} = (\nu_i\dots \nu_1)(\nu_{i+1}\dots \nu_3 \nu_2)\gamma_1(\nu_2 \nu_3 \dots \nu_{i+1}) (\nu_1 \nu_2\dots \nu_i).\]

This allows us to rewrite all relations in Definition~\ref{def:mn} using only the generators $\mu_1$ and $\gamma_1$ together with the virtual generators $\nu_i$ for $1\leq i \leq n-1$. Hence, the monoid $M_n$ admits a presentation with fewer generators. Consequently, the virtual singular twin monoid $VSTM_n$ has a reduced presentation in terms of the connecting strings $\mu_1$ and $\gamma_1$, along with the virtual generators $\nu_i$, as stated below.

\begin{theorem}\label{prop:redvssn}
The virtual singular twin monoid $VSTM_n$ admits the following reduced presentation with $\{\nu_1, \nu_2,\dots, \nu_{n-1}\}$ and connecting strings $\{\mu_1, \gamma_1\}$ as generators, and subject to the following relations:
\begin{align}
     \nu_i^2=1 \,\, &\text{and}\,\, \,  \mu_{1}^{-1}\mu_{1}=1 = \mu^{-1}_{1}\mu_{1}\\
          \nu_i \nu_j \nu_i& = \nu_j \nu_i \nu_j, \,\, |i-j|=1\\
      (\nu_1\nu_2\mu_{1}\nu_2\nu_1)(\nu_2\mu_{1}\nu_2)\gamma_{1}&=\gamma_{1}(\nu_2\mu_{1}\nu_2)(\nu_1\nu_2\mu_{1}\nu_2\nu_1) \\
           \mu_{1}\nu_1\gamma_{1}&=\gamma_{1}\nu_1\mu_{1}\\
       \nu_i\nu_j&=\nu_j\nu_i, \,\, |i-j| \geq 2 \label{eq:fcomv}\\
   \mu_{1} \nu_i  = \nu_i \mu_1\,\,\, &\text{and}\,\, \,\gamma_1\nu_i=\nu_i\gamma_1, \,\, i \geq 3\\
        \gamma_{1} (\nu_2 \nu_1\nu_3 \nu_2 \gamma_{1} \nu_2 \nu_3 \nu_1 \nu_2) &=(\nu_2 \nu_1\nu_3 \nu_2 \gamma_{1} \nu_2 \nu_3 \nu_1 \nu_2)\gamma_{1}   \\
        \gamma_{1} (\nu_2 \nu_1\nu_3 \nu_2 \mu_{1} \nu_2 \nu_3 \nu_1 \nu_2)& =(\nu_2 \nu_1\nu_3 \nu_2 \mu_{1} \nu_2 \nu_3 \nu_1 \nu_2)\gamma_{1}  \\
        \mu_{1} (\nu_2 \nu_1\nu_3 \nu_2 \mu_{1} \nu_2 \nu_3 \nu_1 \nu_2) &=(\nu_2 \nu_1\nu_3 \nu_2 \mu_{1} \nu_2 \nu_3 \nu_1 \nu_2)\mu_{1}   \label{eq:fcom_mu}
 \end{align} 
\end{theorem}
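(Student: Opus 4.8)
The plan is to exploit the isomorphism $M_n \cong VSTM_n$ established in Theorem~\ref{theorem:iso}, so that it suffices to show the monoid $M_n$ of Definition~\ref{def:mn} admits the stated reduced presentation. The strategy mirrors exactly that of Theorem~\ref{thm:reduced}: I would treat the detour expressions for $\mu_{i+1}$ and $\gamma_{i+1}$ displayed just before the statement as defining relations that eliminate every higher-index connecting string, retaining only $\mu_1$, $\gamma_1$, and the virtual generators $\nu_1,\dots,\nu_{n-1}$. The remaining work is then to verify that each defining relation of $M_n$ listed in Definition~\ref{def:mn}, namely \eqref{eq:mnvr3}, \eqref{eq:mnvs3}, \eqref{eq:mnrs31}, \eqref{eq:mnr1}, and \eqref{eq:mnfc}, is a consequence of the virtual relations \eqref{eq:mnid}, \eqref{eq:mnv3}, \eqref{eq:fcomv} together with the base-case relations retained in the statement.

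First I would confirm that the relations kept in the statement are precisely the base cases of the relations of $M_n$. Substituting $\mu_2 = \nu_1\nu_2\mu_1\nu_2\nu_1$ turns \eqref{eq:mnrs31} with $(i,j)=(1,2)$ into the displayed mixed relation; relation \eqref{eq:mnr1} with $i=1$ is retained verbatim; and substituting $\mu_3 = \nu_2\nu_1\nu_3\nu_2\mu_1\nu_2\nu_3\nu_1\nu_2$ together with the analogous expression for $\gamma_3$ converts the commuting relations $\mu_1\mu_3=\mu_3\mu_1$, $\gamma_1\gamma_3=\gamma_3\gamma_1$, and $\gamma_1\mu_3=\mu_3\gamma_1$ into the three displayed length-ten relations. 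Next, following Lemma~\ref{lemma:detour_rel}, the general detour relations \eqref{eq:mnvr3} and \eqref{eq:mnvs3} for $|i-j|=1$ follow automatically, since they are precisely the identities built into the recursive definitions of $\mu_{i+1}$ and $\gamma_{i+1}$.

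The bulk of the argument, paralleling Lemma~\ref{sigv-tauv}, consists of deriving all commuting relations \eqref{eq:mnfc} for $|i-j|\geq 2$ from their base cases. I would first show that $\mu_i\nu_j=\nu_j\mu_i$ and $\gamma_i\nu_j=\nu_j\gamma_i$ for $|i-j|\geq 2$ by sliding a single virtual generator through the detour word, using only \eqref{eq:mnv3}, \eqref{eq:fcomv}, the preparatory identity \eqref{eqs45}, and the retained base cases $\mu_1\nu_i=\nu_i\mu_1$, $\gamma_1\nu_i=\nu_i\gamma_1$ for $i\geq 3$, exactly as in part (i) of Lemma~\ref{sigv-tauv}. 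With these in hand, the relations $\mu_i\mu_j=\mu_j\mu_i$, $\gamma_i\gamma_j=\gamma_j\gamma_i$, and $\mu_i\gamma_j=\gamma_j\mu_i$ for $|i-j|\geq 2$ reduce to the commutation of $\mu_1$ or $\gamma_1$ with $\mu_3$ and $\gamma_3$, that is, to the three displayed base cases. Finally, relations \eqref{eq:mnr1} and \eqref{eq:mnrs31} for arbitrary adjacent indices are recovered by conjugating the corresponding base case by the appropriate virtual word and invoking the detour relations \eqref{eq:mnvr3}, \eqref{eq:mnvs3}, in the same manner as in the lemmas supporting Theorem~\ref{thm:reduced}.

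The main obstacle I anticipate is the verification that the single mixed base case \eqref{eq:mnrs31} with $(i,j)=(1,2)$ propagates to all adjacent pairs, and that the commuting relations hold in full generality; these steps require lengthy but essentially routine detour-move bookkeeping with the virtual generators. However, since each such computation is structurally identical to one already carried out in the lemmas preceding Theorem~\ref{thm:reduced}, no genuinely new phenomenon arises, and the argument closes once every defining relation of $M_n$ has been recovered from the listed generators and relations.
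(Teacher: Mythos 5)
Your proposal follows essentially the same route as the paper: both pass through the monoid $M_n$ and the isomorphism of Theorem~\ref{theorem:iso}, eliminate the generators $\mu_i,\gamma_i$ for $i\geq 2$ via the detour expressions, keep exactly the base-case relations (with $j=2$ for $|i-j|=1$ and $j=3$ for $|i-j|\geq 2$), and recover the full relation set of Definition~\ref{def:mn} by the same detour-move arguments used in the lemmas supporting Theorem~\ref{thm:reduced}. Your identification of each displayed relation with its base case in Definition~\ref{def:mn}, and your observation that \eqref{eq:mnvr3} and \eqref{eq:mnvs3} need not be listed because they are absorbed into the defining detour expressions, match the paper's proof precisely.
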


\begin{proof}
The proof is similar to that of Theorem~\ref{thm:reduced}. We remark that we do not require the generators $\mu_i$ and $\gamma_i$ for $2 \leq i \leq n-1$, since we have defined them in terms $\mu_1, \gamma_1$ and the virtual generators. With respect to the relations, we observe that they coincide with the relations in Definition~\ref{def:mn}, after fixing $i=1$ whenever the relation involves the generators $\mu_i$ or $\gamma_i$. When a relation involves both indices $i$ and $j$, we assume that it occurs at the left most portion of the braid and therefore use the smallest possible subscript for $j$. In particular, for the relations requiring $|i-j|=1$, we fix $j=2$ and express the corresponding generator in terms of a generator with subscript 1. For example, in the identity \eqref{eq:mnvs3}, we use $\nu_1\nu_2\mu_{1}\nu_2\nu_1$ in place of $\mu_2$. Similarly, for relations requiring $|i-j| \geq 2$, we fix $j=3$ and again express the corresponding generator in terms of a generator with subscript 1. Any relation occurring elsewhere in the braid follows from these relations by the detour move. 

We do not include the identities~\eqref{eq:mnvr3} and~\eqref{eq:mnvs3} in this reduced presentation, since they were used to provide the defining relations for $\mu_{i+1}$ and $\gamma_{i+1}$. The remaining relations of Definition~\ref{def:mn} are included with subscripts adjusted, as described above. The commuting relations~\eqref{eq:mnfc} are now represented by the relations~\eqref{eq:fcomv}--\eqref{eq:fcom_mu}. Therefore, the statement holds.
\end{proof}

\textbf{Acknowledgments.} C. Caprau was partially suported by NSF-RUI grant DMS 2204386.

\end{document}